\theoremstyle{plain}
\newtheorem{thm}{Theorem}[section]
\newtheorem{theorem}[thm]{Theorem}
\newtheorem{corollary}[thm]{Corollary}
\newtheorem{lm}[thm]{Lemma}
\newtheorem{lemma}[thm]{Lemma}
\newtheorem{con}[thm]{Conjecture}
\newtheorem{prop}[thm]{Proposition}
\newtheorem{proposition}[thm]{Proposition}
\theoremstyle{definition}
\newtheorem{de}[thm]{Definition}
\newtheorem{exm}[thm]{Example}
\newtheorem{rem}[thm]{Remark}
\newtheorem{remark}[thm]{Remark}
\newtheorem{example}[thm]{Example}
\newcommand{\Z}{\mathbb{Z}}
\newcommand{\A}{\mathcal{A}}
\newcommand{\im}{\mathrm{Im}}
\newcommand{\Aff}{\mathrm{Aff}}
\newcommand{\aff}[1]{\mathrm{Aff}(#1)}
\newcommand{\aut}[1]{\mathrm{Aut}(#1)}
\newcommand{\dis}[1]{\mathrm{Dis}(#1)}
\newcommand{\lmlt}[1]{\mathrm{LMlt}(#1)}
\newcommand{\ld}{\backslash}
\newcommand{\orb}[2]{\mathrm{Orb}_{#1}(#2)}
\numberwithin{equation}{section}
\begin{document}

\title{The structure of medial quandles}

\author{P\v remysl Jedli\v cka}
\author{Agata Pilitowska}
\author{David Stanovsk\'y}
\author{Anna Zamojska-Dzienio}

\address{(P.J.) Department of Mathematics, Faculty of Engineering, Czech University of Life Sciences, Kam\'yck\'a 129, 16521 Praha 6, Czech Republic}
\address{(A.P., A.Z.) Faculty of Mathematics and Information Science, Warsaw University of Technology, Koszykowa 75, 00-662 Warsaw, Poland}
\address{(D.S.) Department of Algebra, Faculty of Mathematics and Physics, Charles University, Sokolovsk\'a 83, 18675 Praha 8, Czech Republic}

\email{(P.J.) jedlickap@tf.czu.cz} 
\email{(A.P.) apili@mini.pw.edu.pl}
\email{(D.S.) stanovsk@karlin.mff.cuni.cz}
\email{(A.Z.) A.Zamojska-Dzienio@mini.pw.edu.pl}

\thanks{Joint research within the framework of the Czech-Polish cooperation grants 7AMB13PL013 and 8829/R13/R14. The third author was partly supported by the GA\v CR grant 13-01832S}

\keywords{Quandles, medial quandles, groupoid modes, SIE groupoids, differential groupoids, reductive groupoids, medial idempotent groupoids, enumeration of quandles.}

\subjclass[2010]{Primary: 57M27, 20N02. Secondary: 08A05, 15A78, 05A16.}
\date{\today}

\begin{abstract}
Medial quandles are represented using a heterogeneous affine
structure. As a consequence, we obtain numerous structural
properties, including enumeration of isomorphism classes of medial
quandles up to 13 elements.
\end{abstract}

\maketitle

\section{Introduction}\label{sec1}

An algebraic structure $(Q,\cdot)$ is called a \emph{quandle} if the following conditions hold, for every $x,y,z\in Q$:
\begin{itemize}
\item $xx=x$ (we say $Q$ is \emph{idempotent}),
\item $x(yz)=(xy)(xz)$ (we say $Q$ is \emph{left distributive}),
\item the equation $xu=y$ has a unique solution $u\in Q$ (we say $Q$ is a \emph{left quasigroup}).
\end{itemize}
Among the many motivations behind quandles, perhaps the most striking is the one coming from knot theory: the three axioms of quandles correspond to the three Reidemeister moves \cite{J82}. See \cite{AG,HSV} for an introduction to the algebraic theory of quandles, and \cite{Car,Kam} for a knot-theoretical perspective.

A quandle $Q$ is called \emph{medial} if $$(xy)(uv)=(xu)(yv)$$ for every $x,y,u,v\in Q$.
(In some papers, the adjective \emph{abelian} is used; this word is somewhat overloaded in mathematics, and we object to use it for a reason explained below.)
The most important examples are \emph{affine quandles} $\Aff(A,f)$ (also called \emph{Alexander quandles} elsewhere), constructed over any abelian group $A$ with an automorphism $f$ by taking the operation $x*y=(1-f)(x)+f(y)$. A detailed study of the structure of affine quandles has been encountered in \cite{H,Hou1,Hou2}. However, we are not aware of any paper devoted to the structure of medial quandles in general. The main purpose of the present paper is to show the rich structure of medial quandles.

Our motivation is twofold. First, mediality defines an important class of quandles, related to the abstract notion of abelianess. Medial quandles are precisely the abelian quandles in the sense of the Higgins commutator theory \cite{Hig}. In other terms, they are the intersection of the class of quandles and the class of \emph{modes} \cite{RS}. Medial quandles are close to being abelian in the sense of the Smith commutator theory \cite{FM}: the orbit decomposition is an abelian congruence.
(This is why we prefer using the adjective `medial', which in turn dates back to the 1940's.)
We plan to study the abstract commutator theory connections in a subsequent paper.
The second motivation is our belief that our methods can be adapted to general quandles, combining the present approach with the theory developed for connected quandles in \cite{HSV}. A proof of concept can be found in \cite{Pie} for the special case of involutory quandles.

Most medial quandles are not affine, this is the multiplication table of the smallest example:
$$\begin{array}{c|ccc}
 &0&1&2\\\hline
0&0&1&2\\
1&0&1&2\\
2&1&0&2\\
\end{array}$$
Our main result, Theorem \ref{thm:decomposition}, states that all medial quandles are built from affine pieces using a heterogeneous affine structure, called \emph{affine mesh} here. The affine pieces correspond to the orbits under a certain group of automorphisms, the multiplication group. In the example, the orbit $\{0,1\}$ is in fact $\Aff(\Z_2,1)$, and the orbit $\{2\}$ is the trivial affine quandle.

The concept of affine meshes turns out to be a powerful tool. As an application, we obtain several structural results about medial quandles (see, e.g., Theorems \ref{thm:latin_orbits}, \ref{cor:copr3}, \ref{thm:reductive}, \ref{cor:copr}, \ref{cor:2-symmetric*}), reveal a hierarchy with respect to algebraic properties (Section \ref{sec:reductivity}), and, perhaps most interestingly, enumerate isomorphism classes of medial quandles up to size 13 (and more, in many interesting cases), thus extending considerably existing enumerations (see the OEIS series A165200 \cite{OEIS}). We also discuss asymptotic enumerations obtained in \cite{B}.

As far as we know, this is the second attempt on a complete orbit decomposition theorem for (a subclass of) quandles, only after \cite{Pie} on involutory quandles.
The orbit decomposition for general quandles was addressed in several papers, most recently in \cite{EGTY,NW}. However, none of the approaches provides the structure of orbits, nor any control over the way the orbits are assembled, nor any isomorphism result. 

Initially, our work was inspired by a series of papers by Roszkowska \cite{R87,R89,R99a,R99b} on involutory medial quandles (called \emph{SIE-groupoids} there). Some of her results are generalized here.

\subsection*{Contents}
In Sections 2--4, we develop the \emph{representation theory}. First, we introduce two important groups acting on a quandle, the multiplication group and the displacement group. Their orbits of transitivity determine a decomposition to subquandles that can be viewed as ``minimal left ideals" (in the sense of semigroup theory). In Section \ref{sec:decomposition}, we prove that all orbits, as subquandles, are affine (Proposition \ref{prop:affine_orbit}), introduce affine meshes and their sums, and prove that every medial quandle can be represented this way (Theorem \ref{thm:decomposition}). In Section \ref{sec:iso} we prove the Isomorphism Theorem \ref{thm:isomorphism} that determines when two meshes represent isomorphic quandles.

In Section \ref{sec:latin_orbits}, we look at medial quandles whose \emph{orbit subquandles are latin squares}, i.e., in every orbit subquandle $\Aff(A,f)$, the endomorphism $1-f$ is an automorphism. This class can be considered as having ``the richest algebraic structure". The main result, Theorem \ref{thm:latin_orbits}, states that all such quandles are direct products of a latin quandle and a projection quandle.

In Section \ref{sec:reductivity}, we develop the notion of \emph{$m$-reductivity} \cite{PR}, stating that in every orbit subquandle
$\Aff(A,f)$, the endomorphism $1-f$ is nilpotent of degree at most $m-1$ (Theorem \ref{thm:reductive}).
As a consequence, we show some limitations on the orbit sizes of medial quandles that are not $m$-reductive for a small $m$ (Corollaries \ref{cor:copr3} and \ref{cor:copr}).
The extreme case, 2-reductivity, refers to quandles where all orbits are projection quandles $\Aff(A,1)$, hence have ``the poorest algebraic structure". This class was investigated (under the name \emph{cyclic modes}) by P\l onka, Romanowska and Roszkowska in \cite{Plo,RR89}. Our representation theorem, Theorem
\ref{thm:2-reductive}, generalizes the one given in \cite[Section 2]{RR89}.

In Section \ref{sec:symmetry}, we apply the representation theory to
medial quandles with a bound on the order of translations. In
particular, we address the structure of \emph{involutory
quandles} (or \emph{keis}), where all translations have order at
most 2, and obtain the results of \cite{R99a} as a special case.

Section \ref{sec:enumeration} contains results on enumeration of isomorphism classes of medial quandles.
First, in \ref{ssec:asymptotics}, we discuss and somewhat refine Blackburn's results \cite{B} on asymptotic enumeration.
Then, in \ref{ssec:computation}, we present computational results on enumeration of small medial quandles, using algorithms described in \ref{ssec:algorithms}.

In Section \ref{sec:congruences}, we conclude the paper with a note on congruence structure of medial quandles, with an outlook on future work.

\subsection*{Notation and basic terminology}
The identity permutation will always be denoted by 1.
For two permutations $\alpha,\beta$, we write $\alpha^\beta=\beta\alpha\beta^{-1}$.
The commutator is defined $[\alpha,\beta]=\beta^\alpha\beta^{-1}$.

Let a group $G$ act on a set $X$. For $e\in X$, the stabilizer of $e$ will be denoted $G_e$.

Let $Q=(Q,\cdot)$ be an algebraic structure with a single binary operation (shortly, a \emph{binary algebra}, also called a groupoid or a magma). 
The \emph{left translation} by $a\in Q$ is the mapping $L_a:Q\to Q$, $x\mapsto ax$. 
If $Q$ is a left quasigroup, the unique solution to $au=b$ will be denoted by $u=a\ld b$, and we have $L_a^{-1}(x)=a\ld x$. 
Observe that $Q$ is left distributive iff all left translations are endomorphisms, and $Q$ is a left quasigroup iff all left translations are permutations.
We will often use the following observation: for every $a\in Q$ and $\alpha\in\aut Q$,
\begin{equation}
(L_a)^\alpha=L_{\alpha(a)}.\label{auto}
\end{equation}
Occasionally, we will also use \emph{right translations} $R_a(x)=xa$.

A \emph{subquandle} is a subset closed with respect to both operations $\cdot$ and $\ld$. Note that finite subsets closed with respect to $\cdot$ are always subquandles.

\section{The displacement group}\label{sec:dis}

The (left) \emph{multiplication group} of a quandle $Q$ is the permutation group generated by left translations, i.e.,
\begin{equation*}
{\rm LMlt}(Q)=\langle L_a\mid a\in Q\rangle\leq S_Q.
\end{equation*}
We define the \emph{displacement group} as the subgroup
\begin{equation*}
{\rm Dis}(Q)=\langle L_aL_b^{-1}\mid a,b\in Q\rangle.
\end{equation*}
Using the fact that all translations are automorphisms of $Q$, together with equality \eqref{auto}, we obtain that both $\lmlt Q$ and $\dis Q$ are normal subgroups of $\aut Q$. (Various names are used in literature for the groups $\lmlt Q$ and $\dis Q$. E.g., Joyce \cite{J82} uses \emph{inner automorphism group} and \emph{transvection group}, respectively, and translations are called \emph{inner mappings}.)

An important lesson learnt in \cite{HSV} is that many properties of quandles are determined by the properties of their displacement groups.
The following facts will be used extensively throughout the paper without explicit reference (all ideas in Proposition \ref{Pr:DQ} appeared already in \cite{J82,J82b}).

\begin{proposition}\label{Pr:DQ}
Let $Q$ be a quandle. Then
\begin{enumerate}
\item $\dis Q = \{L_{a_1}^{k_1}\dots L_{a_n}^{k_n}:\ a_1,\dots,a_n\in Q$ and $\sum_{i=1}^n k_i=0\}$;
\item the natural actions of $\lmlt{Q}$ and $\dis{Q}$ on $Q$ have the same orbits;
\item $Q$ is medial if and only if $\dis Q$ is abelian.
\end{enumerate}
\end{proposition}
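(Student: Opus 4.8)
The plan is to handle the three parts in order, leaning throughout on the normality of $\dis Q$ in $\lmlt Q$ (a consequence of \eqref{auto}) and on idempotency; the whole proposition is really a sequence of short group-theoretic observations once these are in hand.

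For (1) I would first record the trivial inclusion: each generator $L_aL_b^{-1}$, and hence every element of $\dis Q$, is a product of left translations whose exponents sum to $0$. For the reverse inclusion I would avoid trying to define an exponent-sum homomorphism $\lmlt Q\to\Z$ directly (whose well-definedness is exactly the delicate point) and instead pass to the quotient $\lmlt Q/\dis Q$, which is legitimate since $\dis Q\trianglelefteq\lmlt Q$. Because $L_aL_c^{-1}\in\dis Q$ for every $a$, all cosets $\overline{L_a}$ coincide with a single fixed coset $\overline{L_c}$, so $\lmlt Q/\dis Q$ is cyclic and every word maps according to
\[
L_{a_1}^{k_1}\cdots L_{a_n}^{k_n}\ \longmapsto\ \overline{L_c}^{\,k_1+\cdots+k_n}.
\]
When the exponents sum to $0$ this image is trivial, placing the word in $\dis Q$, which gives the claimed set equality.

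For (2), one inclusion is immediate from $\dis Q\leq\lmlt Q$: every $\dis Q$-orbit sits inside an $\lmlt Q$-orbit. For the converse I would take $g\in\lmlt Q$ with $g(a)=b$, write $g=L_{a_1}^{k_1}\cdots L_{a_n}^{k_n}$, and set $m=\sum_i k_i$. Idempotency yields $L_a(a)=a$, hence $L_a^{-m}(a)=a$, so $gL_a^{-m}$ still carries $a$ to $b$; since its exponents now sum to $0$, part (1) shows $gL_a^{-m}\in\dis Q$, proving $b$ lies in the $\dis Q$-orbit of $a$. For (3) I would first translate mediality into a statement about translations: using left distributivity in the form $L_{ab}=L_aL_bL_a^{-1}$ to rewrite both sides of $(xy)(uv)=(xu)(yv)$ and cancelling a common leading $L_x$, one obtains the equivalent identity $L_yL_x^{-1}L_u=L_uL_x^{-1}L_y$ for all $x,y,u\in Q$. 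I would then note that for any fixed $c$ the set $\{L_aL_c^{-1}:a\in Q\}$ generates $\dis Q$, and that this operator identity says precisely that any two such ``same-base'' generators commute; hence it holds if and only if $\dis Q$ is abelian.

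The main obstacle is conceptual rather than computational, and it occurs in (1): isolating the exponent sum as a genuine invariant of a group element, which I expect to be cleanest via the cyclic quotient above rather than a naive free-group or direct-homomorphism argument. The secondary point requiring care is the reduction in (3) from commutativity of all differences $L_aL_b^{-1}$ to commutativity of same-base differences, so that the derived operator identity matches \emph{exactly} the condition that $\dis Q$ be abelian.
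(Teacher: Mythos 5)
Your proposal is correct, and part (1) takes a genuinely different route from the paper's. The paper proves the nontrivial inclusion $S\subseteq\dis Q$ by induction on the length of a word with all $k_i=\pm1$ and exponent sum zero, splitting into two cases: if $k_1=k_n$, the partial sums vanish at some intermediate point and the word factors into two shorter zero-sum words; if $k_1\neq k_n$, it rewrites $\alpha=L_a^k\beta L_b^{-k}=(L_a^kL_{\beta(b)}^{-k})\beta$ using \eqref{auto}. This is constructive and self-contained but requires the two-case bookkeeping. You instead pass to the quotient $\lmlt Q/\dis Q$ --- legitimate, since the paper establishes just before the proposition that $\dis Q$ is normal in $\aut Q$, hence in $\lmlt Q$ --- note that all cosets $\overline{L_a}$ coincide because $L_aL_c^{-1}\in\dis Q$, so any word maps to $\overline{L_c}^{\,\sum k_i}$, and exponent sum zero forces membership in $\dis Q$. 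This is shorter, correctly sidesteps the ill-definedness of a naive exponent-sum homomorphism on $\lmlt Q$ (your worry is well placed: in a projection quandle every $L_a=1$, so equal group elements admit words of different exponent sums), and yields as a byproduct the useful fact that $\lmlt Q/\dis Q$ is cyclic. Part (2) is essentially the paper's argument: the paper left-composes with $L_y^{-k}$ where $y$ is the target point, you right-compose with $L_a^{-m}$ where $a$ is the source --- the same idea via idempotency. In (3) you derive exactly the paper's identity \eqref{Eq:DQ2}, but close the forward direction differently: where the paper manipulates \eqref{Eq:DQ2} directly to show that arbitrary generators $L_xL_y^{-1}$, $L_uL_v^{-1}$ commute, you observe that \eqref{Eq:DQ2} is literally pairwise commutativity of the same-base elements $L_yL_x^{-1}$ and $L_uL_x^{-1}$ (cancel the trailing $L_x^{-1}$), that for fixed $c$ the set $\{L_aL_c^{-1}:a\in Q\}$ generates $\dis Q$ since $L_aL_b^{-1}=(L_aL_c^{-1})(L_bL_c^{-1})^{-1}$, and that a group generated by pairwise commuting elements is abelian. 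Both closings are sound; yours trades the paper's explicit computation for a cleaner generation argument.
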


\begin{proof}
%
(1) Let $S$ be the set on the right-hand side of the expression. Since the generators of $\dis Q$ belong to $S$, we have $\dis Q\subseteq S$. For the other inclusion,
we note that every $\alpha\in S$ can be written as $L_{a_1}^{k_1}\dots L_{a_n}^{k_n}$, where not only $\sum_i k_i=0$ but also $k_i=\pm 1$. Assuming such a decomposition, we prove by induction on $n$ that $\alpha\in\dis Q$.
If $n=0$ then $\alpha$ is the identity, the case $n=1$ does not occur, and if $n=2$ we have either $\alpha=L_aL_b^{-1}\in\dis Q$, or $\alpha=L_a^{-1}L_b=L_{a\ld b}L_a^{-1}\in\dis Q$. Suppose that $n>2$.

If $k_1=k_n$ then there is $1<m<n$ such that $\sum_{i<m}k_i=0$ and $\sum_{i\geq m}k_i=0$. Let $\beta=L_{a_1}^{k_1}\ldots L_{a_{m-1}}^{k_{m-1}}$ and $\gamma=L_{a_m}^{k_m}\ldots L_{a_n}^{k_n}$. Then, by the induction assumption, $\beta,\gamma\in\dis Q$, and so $\alpha=\beta\gamma\in\dis Q$.

If $k_1\neq k_n$ then $$\alpha=L_a^k\beta L_b^{-k}=L_a^k(\beta L_b^{-k}\beta^{-1})\beta=(L_a^kL_{\beta(b)}^{-k})\beta$$ for some $a,b\in Q$, $k\in\{\pm1\}$ and $\beta=L_{a_2}^{k_2}\ldots L_{a_{n-1}}^{k_{n-1}}$ such that $\sum_{2\leq i\leq n-1}k_i=0$. Since both $L_a^kL_{\beta(b)}^{-k}$ and $\beta$ belong to $\dis Q$, we get $\alpha\in\dis Q$.

(2) Let $x,y$ be two elements in a single orbit of $\lmlt Q$ such that $y=\alpha(x)$ with $\alpha=L_{a_1}^{k_1}\dots L_{a_n}^{k_n}\in\lmlt Q$. With $k=k_1+\cdots+k_n$, we have $\beta = L_y^{-k}\alpha\in\dis Q$ by (1), and $\beta(x) = L_y^{-k}\alpha(x)= L_y^{-k}(y) = y$.

(3) $Q$ is medial iff $L_{xy}L_z=L_{xz}L_y$ for every $x,y,z\in Q$, and by expanding $L_{xy}=L_xL_yL_x^{-1}$, and similarly for $L_{xz}$, we obtain that $Q$ is medial iff
\begin{equation}\label{Eq:DQ2}
L_yL_x^{-1}L_z=L_zL_x^{-1}L_y
\end{equation}
for every $x,y,z\in Q$.
$(\Leftarrow)$ If $\dis Q$ is abelian then $L_yL_x^{-1}L_zL_y^{-1}=L_zL_y^{-1}L_yL_x^{-1}=L_zL_x^{-1}$ for every $x,y,z\in Q$, and we obtain \eqref{Eq:DQ2}.
$(\Rightarrow)$ Conversely, starting with \eqref{Eq:DQ2}, we obtain $L_y^{-1}L_xL_z^{-1}=L_z^{-1}L_xL_y^{-1}$ for every $x,y,z\in Q$, and thus $L_xL_y^{-1}L_uL_v^{-1}=L_uL_y^{-1}L_xL_v^{-1}=L_uL_v^{-1}L_xL_y^{-1}$ for every $x,y,u,v\in Q$, proving that $\dis Q$ is abelian.
\end{proof}

We will refer to the orbits of transitivity of the groups $\lmlt Q$ and $\dis Q$ simply as \emph{the orbits of $Q$}, and denote $$Qe=\{\alpha(e)\mid\alpha\in\lmlt Q\}=\{\alpha(e)\mid\alpha\in\dis Q\}$$ the orbit containing an element $e\in Q$.
Notice that orbits are subquandles of $Q$: for $\alpha(e),\beta(e)\in Qe$ with $\alpha,\beta\in\lmlt Q$, we have $\alpha(e)\cdot\beta(e)=(L_{\alpha(e)}\beta)(e)\in Qe$ and $\alpha(e)\ld\beta(e)=(L_{\alpha(e)}^{-1}\beta)(e)\in Qe$.

A quandle is called \emph{connected}, if it consists of a single orbit. Orbits (as subquandles) are not necessarily connected.
A quandle is called \emph{latin} (or, a \emph{quasigroup}), if the right translations, $R_a:Q\to Q$, $x\mapsto xa$, are bijective, too. Latin quandles are obviously connected.
Connected quandles were studied in detail in \cite{HSV}. In particular, it was proved there that connected medial quandles are affine, see also
Corollary \ref{cor_medial iff affine}.

\begin{example}
Let $A$ be an abelian group, $f$ its endomorphism, and define an operation on the set $A$ by
\begin{equation*}
a*b=(1-f)(a)+f(b).
\end{equation*}
The resulting binary algebra $\aff{A,f}=(A,*)$ is called \emph{affine} over the group $A$, and is easily shown to be idempotent and medial. If $f$ is an automorphism then it is a medial quandle, called \emph{affine quandle} over $A$. Notice the equation
\begin{equation*}
a\setminus b=L_a^{-1}(b)=(1-f^{-1})(a)+f^{-1}(b).
\end{equation*}
Any non-empty set with operation $a\cdot b=b$ is a medial quandle, called \emph{right projection quandle}. It is affine with $f=1$.
\end{example}

An alternative definition of affine quandles can be given in terms of modules: every affine quandle results from a module over the ring $\Z[t,t^{-1}]$ of Laurent series over the integers. The relation between affine quandles and $\Z[t,t^{-1}]$-modules is explained in detail in \cite{H,Hou2}.

It is not difficult to calculate that $$\dis{\Aff(A,f)}=\{x\mapsto x+a: a\in\im(1-f)\}\simeq\im(1-f),$$ hence $\Aff(A,f)$ is connected iff $1-f$ is onto. Clearly, $\Aff(A,f)$ is latin iff $1-f$ is a permutation, hence finite connected affine quandles are always latin.

\begin{rem}
Our main result, Theorem \ref{thm:decomposition}, shows that for every medial quandle, there is a congruence (namely, the orbit decomposition) such that all blocks are affine quandles and the factor is a right projection quandle. A complementary approach is suggested in \cite[Theorem 8.6.13]{RS}: for a medial quandle $Q$ and a fixed element $e\in Q$, consider the mapping
$$\varphi:Q\to\aff{\dis Q,\psi_e},\quad a\mapsto L_aL_e^{-1},$$
where $\psi_e(\alpha)=\alpha^{L_e}$. It is not difficult to check that $\varphi$ is an onto homomorphism, hence $Q/\mathrm{ker}(\varphi)$ is an affine quandle and the blocks of the kernel are right projection quandles.
\end{rem}

\section{Orbit decomposition}\label{sec:decomposition}

Let $Q$ be a medial quandle and $e\in Q$. There is a bijection between the elements of the orbit $Qe=\{\alpha(e)\mid\alpha\in\dis Q\}$, and the elements of the abelian group $\dis Q/\dis Q_e$, with the coset $\alpha\dis Q_e$ corresponding to the element $\alpha(e)$. This justifies the following definition (which makes sense in a much wider setting and can be traced back to \cite[Corollary 2.7]{Kep}).

\begin{de}
Let $\alpha(e),\beta(e)\in Qe$ with $\alpha,\beta\in\dis Q$ and put
$$\alpha(e)+\beta(e)=\alpha\beta(e)\qquad\text{and}\qquad -\alpha(e)=\alpha^{-1}(e).$$
Then $\orb Qe=(Qe,+,-,e)$ is an abelian group, called the \emph{orbit group} for $Qe$.
\end{de}

Clearly, if $Qe=Qf$, we have $\orb Qe\simeq\dis Q/\dis Q_e\simeq\dis Q/\dis Q_f\simeq\orb Qf$. In fact, as we shall see, every $\lambda\in\lmlt Q$ acts as an isomorphism.

\begin{lemma}\label{lem:autQe}
Let $Q$ be a medial quandle, $e\in Q$ and $\lambda\in\lmlt Q$. Then $\lambda$ is an isomorphism $\orb Qe\simeq\orb Q{\lambda(e)}$.
\end{lemma}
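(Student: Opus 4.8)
The plan is to exhibit $\lambda$ itself as the desired isomorphism, by showing that under the action of $\lambda$ the group structure based at $e$ is transported precisely to the group structure based at $\lambda(e)$. First I would observe that, since $\lambda\in\lmlt Q$ is a permutation of $Q$ and $\lmlt Q$ acts transitively on the orbit $Qe$, the restriction $\lambda|_{Qe}$ is a bijection of $Qe$ onto $Q\lambda(e)$ (indeed $Q\lambda(e)=Qe$, as $\lambda(e)\in Qe$). Thus $\lambda$ is already a bijection between the underlying sets of $\orb Qe$ and $\orb Q{\lambda(e)}$, and it remains only to check that it respects $+$, $-$ and the neutral element.

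The key is conjugation by $\lambda$. Because $\dis Q$ is normal in $\aut Q$ and $\lambda\in\lmlt Q\le\aut Q$, for every $\alpha\in\dis Q$ we have $\alpha^\lambda=\lambda\alpha\lambda^{-1}\in\dis Q$. The crucial computation is then $\lambda(\alpha(e))=\lambda\alpha\lambda^{-1}(\lambda(e))=\alpha^\lambda(\lambda(e))$, which rewrites the $\lambda$-image of the point $\alpha(e)\in\orb Qe$ in the coordinates of $\orb Q{\lambda(e)}$, namely as the point indexed by $\alpha^\lambda$. With this in hand, the homomorphism property follows from the multiplicativity of conjugation: for $\alpha,\beta\in\dis Q$ we have $\lambda(\alpha(e)+\beta(e))=\lambda(\alpha\beta(e))=(\alpha\beta)^\lambda(\lambda(e))=\alpha^\lambda\beta^\lambda(\lambda(e))$, whereas $\lambda(\alpha(e))+\lambda(\beta(e))=\alpha^\lambda(\lambda(e))+\beta^\lambda(\lambda(e))=\alpha^\lambda\beta^\lambda(\lambda(e))$, and the two agree. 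Compatibility with $-$ and the fact that $\lambda(e)$ is the neutral element of $\orb Q{\lambda(e)}$ follow in the same way from $(\alpha^{-1})^\lambda=(\alpha^\lambda)^{-1}$ and $\lambda(e)=1^\lambda(\lambda(e))$ (and are in any case automatic once $\lambda$ is a bijective additive map).

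Conceptually, the whole statement is the observation that conjugation $\alpha\mapsto\alpha^\lambda$ is an automorphism of $\dis Q$ carrying the stabilizer $\dis Q_e$ onto $\dis Q_{\lambda(e)}$, hence inducing an isomorphism $\dis Q/\dis Q_e\to\dis Q/\dis Q_{\lambda(e)}$ that corresponds to $\lambda$ under the identifications $\orb Qe\simeq\dis Q/\dis Q_e$ and $\orb Q{\lambda(e)}\simeq\dis Q/\dis Q_{\lambda(e)}$. There is no serious obstacle here; the only points requiring care are the membership $\alpha^\lambda\in\dis Q$ (which is exactly where normality of $\dis Q$ in $\aut Q$ is used) and keeping the two base points $e$ and $\lambda(e)$ straight, since the underlying sets of the two orbit groups coincide while their operations differ.
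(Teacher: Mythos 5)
Your proposal is correct and follows essentially the same route as the paper's proof: the key identity $\lambda(\alpha(e))=\alpha^{\lambda}(\lambda(e))$ (using normality of $\dis Q$ in $\aut Q$ to ensure $\alpha^\lambda\in\dis Q$) and the multiplicativity of conjugation, $(\alpha\beta)^\lambda=\alpha^\lambda\beta^\lambda$, to verify additivity. Your shortcut for bijectivity --- noting that $Q\lambda(e)=Qe$ as sets since $\lambda(e)\in Qe$ and that $\lambda$ permutes the orbit --- is a mild streamlining of the paper's direct verification, not a different argument.
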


\begin{proof}
Let $\alpha(e), \beta(e)\in Qe$ with $\alpha,\beta\in\dis Q$. First notice that $$\lambda(\alpha(e))=\lambda\alpha
\lambda^{-1}\lambda(e)=\alpha^{\lambda}(\lambda(e)).$$
It follows immediately that $\lambda$ maps $Qe$ into $Q\lambda(e)$. The mapping $\lambda$ is injective, and for every $\gamma\in\dis Q$ we have
$\gamma(\lambda(e))=\lambda\gamma^{\lambda^{-1}}(e)\in\lambda(Qe)$, hence it is a bijection between $Qe$ and $Q\lambda(e)$.

It remains to show that $\lambda$ preserves the addition.
On one side, we have
$\lambda(\alpha(e)+\beta(e))=\lambda(\alpha\beta(e))=(\alpha\beta)^{\lambda}(\lambda(e))$.
On the other side, we have
$\lambda(\alpha(e))+\lambda(\beta(e))=\alpha^{\lambda}(\lambda(e))+\beta^{\lambda}(\lambda(e))=\alpha^{\lambda}\beta^{\lambda}(\lambda(e))$,
and we see the two sides are equal.
\end{proof}

It follows that the translation $L_e$ is an automorphism of the group $\orb Qe$, for every $e\in Q$. We are ready to prove the first important step towards the decomposition theorem: every orbit of a medial quandle is an affine quandle.

\begin{prop}\label{affine}\label{prop:affine_orbit}
Let $Q$ be a medial quandle and $e\in Q$. Then $Qe=\aff{\orb Qe,L_e}$.
\end{prop}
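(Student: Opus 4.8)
The plan is to exhibit both sides of the claimed equality as binary algebras on the common underlying set $Qe$ and to check that their operations coincide; since $Qe$ is a subquandle of $Q$ and $\aff{\orb Qe,L_e}$ is a quandle, the derived operation $\ld$ is in both cases the inverse of left translation and is therefore forced once $\cdot$ agrees. Thus everything reduces to the single identity
$$a\cdot b=(1-L_e)(a)+L_e(b)\qquad\text{for all }a,b\in Qe,$$
where $+,-$ are the operations of the orbit group $\orb Qe$ and the right-hand side is the affine operation of $\aff{\orb Qe,L_e}$.

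First I would record that $L_e$ genuinely is an automorphism of the group $\orb Qe$, which is what even makes $\aff{\orb Qe,L_e}$ a legitimate affine quandle: since $L_e\in\lmlt Q$ and $L_e(e)=e$ by idempotence, Lemma \ref{lem:autQe} gives that $L_e$ is an isomorphism $\orb Qe\to\orb Q{L_e(e)}=\orb Qe$. I would then fix representatives $a=\alpha(e)$ and $b=\beta(e)$ with $\alpha,\beta\in\dis Q$ and evaluate each side. Applying the action formula $\lambda(\alpha(e))=\alpha^{\lambda}(\lambda(e))$ from the proof of Lemma \ref{lem:autQe} with $\lambda=L_e$, and using $L_e(e)=e$ (equivalently $L_e^{-1}(e)=e$), I obtain $L_e(b)=\beta^{L_e}(e)$ and, unwinding the definitions of $-$ and $+$, $(1-L_e)(a)=\alpha(\alpha^{L_e})^{-1}(e)$. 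Adding the two via the orbit-group rule $\gamma(e)+\delta(e)=\gamma\delta(e)$ and telescoping the conjugates, the result simplifies to $\alpha L_e\alpha^{-1}\beta(e)$. On the other hand, $a\cdot b=L_{\alpha(e)}(\beta(e))=(L_e)^{\alpha}(\beta(e))=\alpha L_e\alpha^{-1}\beta(e)$ by \eqref{auto}, so the two sides match.

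The only genuinely delicate point is the bookkeeping with coset representatives. The addition formula $\gamma(e)+\delta(e)=\gamma\delta(e)$ may be invoked only when $\gamma,\delta\in\dis Q$, whereas a naive expression such as $\alpha L_e\alpha^{-1}$ lies in $\lmlt Q$ but has exponent sum $1$, so it is \emph{not} in $\dis Q$. The fix is to work with the representatives $\alpha(\alpha^{L_e})^{-1}=\alpha L_e\alpha^{-1}L_e^{-1}$ for $(1-L_e)(a)$ and $\beta^{L_e}=L_e\beta L_e^{-1}$ for $L_e(b)$: each has zero exponent sum, hence lies in $\dis Q$ by Proposition \ref{Pr:DQ}(1), and the trailing $L_e^{-1}$ may be dropped when acting on $e$ because $L_e^{-1}(e)=e$. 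I expect this representative juggling, rather than any conceptual difficulty, to be the main thing to get right.

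Finally, I would note where mediality enters. Commutativity of $\dis Q$ (Proposition \ref{Pr:DQ}(3)) is exactly what makes the orbit-group addition well defined, independently of the chosen representatives; this was already established when $\orb Qe$ was introduced. Beyond guaranteeing that $\orb Qe$ is a well-defined abelian group, the concluding computation above telescopes formally, so no further appeal to mediality is required: once the representatives are chosen in $\dis Q$, matching the two sides is a one-line simplification.
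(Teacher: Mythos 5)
Your proposal is correct and takes essentially the same route as the paper's proof: fixing representatives $a=\alpha(e)$, $b=\beta(e)$ with $\alpha,\beta\in\dis Q$, rewriting $(1-L_e)(a)+L_e(b)$ as $\alpha(\alpha^{L_e})^{-1}\beta^{L_e}(e)$, and collapsing this to $\alpha L_e\alpha^{-1}\beta(e)=L_{\alpha(e)}\beta(e)=a\cdot b$ via \eqref{auto}. The representative bookkeeping you flag as the delicate point is exactly where the paper notes that $\alpha^{L_e},\beta^{L_e}\in\dis Q$ (you justify it by zero exponent sum, the paper by normality of $\dis Q$ --- the same fact), so there is no gap.
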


\begin{proof}
Let $a,b\in Qe$, write $a=\alpha(e)$, $b=\beta(e)$ for some $\alpha,\beta\in\dis Q$. We want to prove that $$a\cdot b=(1-L_e)(a)+L_e(b).$$
Write $(1-L_e)(a)+L_e(b)=\alpha(e)-L_e\alpha(e)+L_e\beta(e)=\alpha(e)-\alpha^{L_e}(e)+\beta^{L_e}(e)$, and using the fact that both $\alpha^{L_e},\beta^{L_e}\in\dis Q$, we can rewrite the right-hand side as $\alpha(\alpha^{L_e})^{-1}\beta^{L_e}(e)=\alpha L_e\alpha^{-1}(L_e)^{-1}L_e\beta L_e^{-1}(e)=L_{\alpha(e)}\beta(e)=L_a(b)=a\cdot b$.
\end{proof}

\begin{corollary}\cite[Section 5]{HSV}\label{cor_medial iff affine}
A connected quandle is medial if and only if it is affine.
\end{corollary}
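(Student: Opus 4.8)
The plan is to read this off almost immediately from Proposition~\ref{prop:affine_orbit}, into which all the substantive work has already been absorbed. For the backward implication I would say only a sentence: every affine quandle $\aff{A,f}$ was already observed in the Example to be medial, and connectedness is irrelevant to that fact, so an affine connected quandle is medial.

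For the forward implication, suppose $Q$ is connected and medial. Connectedness means that $Q$ consists of a single orbit, so fixing any $e\in Q$ we have $Q=\orb Qe$ as subquandles. Proposition~\ref{prop:affine_orbit} then gives $Q=\orb Qe=\aff{\orb Qe,L_e}$ on the nose, and it remains only to check that $L_e$ is a group \emph{automorphism} of $\orb Qe$, not merely an endomorphism; this is precisely what promotes $\aff{\orb Qe,L_e}$ from an affine idempotent medial groupoid to an honest affine quandle.

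That last point is exactly the remark preceding Proposition~\ref{prop:affine_orbit}: applying Lemma~\ref{lem:autQe} with $\lambda=L_e$ yields an isomorphism $\orb Qe\simeq\orb Q{L_e(e)}$, and idempotency gives $L_e(e)=e\cdot e=e$, so $L_e$ is an automorphism of $\orb Qe$. There is really no obstacle to overcome here, since the heavy lifting was done in Lemma~\ref{lem:autQe} and Proposition~\ref{prop:affine_orbit}; the only thing worth guarding against is conflating endomorphisms with automorphisms, as it is the automorphism property that makes the affine structure genuinely a quandle.
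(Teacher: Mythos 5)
Your proof is correct and follows the paper's intended route exactly: the paper derives this corollary directly from Proposition~\ref{prop:affine_orbit} (a connected quandle being a single orbit), with the automorphism property of $L_e$ supplied by the remark after Lemma~\ref{lem:autQe}, and the converse being the observation that affine quandles are medial. Your extra care in distinguishing the endomorphism and automorphism roles of $L_e$ matches the paper's own remark and is not a deviation.
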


\begin{example}\label{ex:6elt}
Let $Q=\aff{\Z_6,-1}$. The multiplication table can be written as follows:
$$\begin{array}{c|ccc|ccc}
 &0&2&4&1&3&5\\\hline
0&0&4&2&5&3&1\\
2&4&2&0&3&1&5\\
4&2&0&4&1&5&3\\
1&2&0&4&1&5&3\\
3&0&4&2&5&3&1\\
5&4&2&0&3&1&5\\
\end{array}$$
We immediately see that there are two orbits, $Q0$ and $Q1$. Calculate
\begin{align*}
\lmlt Q&=\langle(2\ 4)(1\ 5),(0\ 4)(1\ 3),(0\ 2)(3\ 5)\rangle,\\
\dis Q&=\langle(0\ 4\ 2)(1\ 5\ 3)\rangle,
\end{align*}
and observe that $\dis Q_0=\dis Q_1=\{1\}$. Hence $\orb Q0\simeq\dis Q/\dis Q_0\simeq\Z_3$, where $L_0$ acts on the group
$\Z_3$ as $-1$, and analogously for $Q1$. We obtain $Q0\simeq Q1\simeq\aff{\Z_3,-1}$.
\end{example}

The group structure of the orbits motivates the following two important definitions.

\begin{de}
An \emph{affine mesh} over a non-empty set $I$ is a triple
$$\mathcal A=((A_i)_{i\in I},\,(\varphi_{i,j})_{i,j\in I},\,(c_{i,j})_{i,j\in I})$$ where $A_i$ are abelian groups, $\varphi_{i,j}:A_i\to A_j$ homomorphisms, and $c_{i,j}\in A_j$ constants, satisfying the following conditions for every $i,j,j',k\in I$:
\begin{enumerate}
    \item[(M1)] $1-\varphi_{i,i}$ is an automorphism of $A_i$;
    \item[(M2)] $c_{i,i}=0$;
    \item[(M3)] $\varphi_{j,k}\varphi_{i,j}=\varphi_{j',k}\varphi_{i,j'}$, i.e., the following diagram commutes:
$$\begin{CD}
A_i @>\varphi_{i,j}>> A_j\\ @VV\varphi_{i,j'}V @VV\varphi_{j,k}V\\
A_{j'} @>\varphi_{j',k}>> A_k
\end{CD}$$
    \item[(M4)] $\varphi_{j,k}(c_{i,j})=\varphi_{k,k}(c_{i,k}-c_{j,k})$.
\end{enumerate}
If the index set is clear from the context, we shall write briefly $\mathcal A=(A_i;\varphi_{i,j};c_{i,j})$.
\end{de}

\begin{de}
The \emph{sum of an affine mesh} $(A_i;\varphi_{i,j};c_{i,j})$ over a set $I$ is a binary algebra defined on the disjoint union of the sets $A_i$, with operation
$$a*b=c_{i,j}+\varphi_{i,j}(a)+(1-\varphi_{j,j})(b)$$
for every $a\in A_i$ and $b\in A_j$.
\end{de}

Notice that every fibre $A_i$ becomes a subquandle of the sum, and for $a,b\in A_i$ we have
$$a*b=\varphi_{i,i}(a)+(1-\varphi_{i,i})(b),$$
hence $(A_i,*)$ is affine and equal to $\aff{A_i,1-\varphi_{i,i}}$.

\begin{lemma}\label{lem:mesh1}
The sum of an affine mesh is a medial quandle.
\end{lemma}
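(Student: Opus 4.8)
The plan is to verify the three quandle axioms together with the medial law directly from the defining operation $a*b = c_{i,j} + \varphi_{i,j}(a) + (1-\varphi_{j,j})(b)$, exploiting the fact that the fibre of a product $a*b$ is always the fibre $A_j$ of its second argument. Idempotence is immediate: for $a\in A_i$ we get $a*a = c_{i,i} + \varphi_{i,i}(a) + (1-\varphi_{i,i})(a) = a$ by (M2). For the left quasigroup axiom I would observe that each left translation $L_a$ carries the fibre $A_j$ into itself by the affine rule $b\mapsto c_{i,j}+\varphi_{i,j}(a)+(1-\varphi_{j,j})(b)$; since $1-\varphi_{j,j}$ is an automorphism of $A_j$ by (M1), this restriction is a bijection of $A_j$, so $L_a$ is a permutation of the whole sum, giving unique solvability of $a*u=b$.

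Next I would dispose of left distributivity cheaply: substituting $u=x$ into the medial identity $(xy)(uv)=(xu)(yv)$ and using idempotence $xx=x$ yields $(xy)(xv)=x(yv)$, which is exactly left distributivity. Hence it suffices to establish mediality, after which distributivity is free.

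The core of the argument, and the step I expect to be the main obstacle, is the medial law. I would fix $a\in A_i$, $b\in A_j$, $c\in A_k$, $d\in A_l$ and expand both $(a*b)*(c*d)$ and $(a*c)*(b*d)$. Both products land in $A_l$, so the identity is an equation in the abelian group $A_l$ and can be checked coefficient by coefficient in $a,b,c,d$, plus a constant term. The coefficient of $d$ is $(1-\varphi_{l,l})^2$ on both sides; the coefficient of $a$ is $\varphi_{j,l}\varphi_{i,j}$ on the left and $\varphi_{k,l}\varphi_{i,k}$ on the right, equal by (M3). The $b$- and $c$-coefficients each reduce to an instance of (M3) with a repeated index, namely $\varphi_{j,l}\varphi_{j,j}=\varphi_{l,l}\varphi_{j,l}$ and $\varphi_{l,l}\varphi_{k,l}=\varphi_{k,l}\varphi_{k,k}$.

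The genuinely delicate part is the constant term. On the left it equals $c_{j,l}+\varphi_{j,l}(c_{i,j})+(1-\varphi_{l,l})(c_{k,l})$ and on the right $c_{k,l}+\varphi_{k,l}(c_{i,k})+(1-\varphi_{l,l})(c_{j,l})$; after cancelling $c_{j,l}$ and $c_{k,l}$ the claim becomes $\varphi_{j,l}(c_{i,j})+\varphi_{l,l}(c_{j,l})=\varphi_{k,l}(c_{i,k})+\varphi_{l,l}(c_{k,l})$. Here I would apply (M4) with target index $l$ to each side separately, obtaining $\varphi_{j,l}(c_{i,j})=\varphi_{l,l}(c_{i,l}-c_{j,l})$ and $\varphi_{k,l}(c_{i,k})=\varphi_{l,l}(c_{i,l}-c_{k,l})$, whereupon both sides collapse to the common value $\varphi_{l,l}(c_{i,l})$. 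This is where all four conditions interlock: (M2) and (M1) give idempotence and solvability, (M3) handles the linear part of mediality, and (M4) — used in the non-obvious form with a common third index $l$ — handles the constants.
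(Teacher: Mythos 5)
Your proof is correct and takes essentially the same route as the paper: idempotence from (M2), the left-quasigroup property from (M1) together with the fact that $a*b$ lies in the fibre of $b$, mediality by expanding both products and invoking (M3) for the linear parts and (M4) for the constants, and left distributivity obtained for free from mediality plus idempotence. You merely make explicit the coefficient-by-coefficient check (including the collapse of both constant terms to $\varphi_{l,l}(c_{i,l})$) that the paper compresses into ``the equality easily follows from (M3) and (M4).''
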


\begin{proof}
For idempotence, $a*a=c_{i,i}+\varphi_{i,i}(a)+(1-\varphi_{i,i})(a)=a$ for every $a\in A_i$.
For the left quasigroup property, notice that the equation $a*x=c_{i,j}+\varphi_{i,j}(a)+(1-\varphi_{j,j})(x)=b$ with $a\in A_i$, $b\in A_j$, has a unique solution in $A$, namely $$x=(1-\varphi_{j,j})^{-1}(b-\varphi_{i,j}(a)-c_{i,j})\in A_j.$$
For mediality, with $a\in A_i$, $b\in A_j$, $c\in A_k$, $d\in A_l$, calculate
\begin{gather*}
(a*b)*(c*d)=\varphi_{j,l}(c_{i,j})+(1-\varphi_{l,l})(c_{k,l})+c_{j,l}+\qquad\qquad\qquad\qquad\\
                        \qquad\qquad\qquad\qquad\varphi_{j,l}(\varphi_{i,j}(a)+(1-\varphi_{j,j})(b)) + (1-\varphi_{l,l})(\varphi_{k,l}(c)+(1-\varphi_{l,l})(d)),
\end{gather*}
and
\begin{gather*}
(a*c)*(b*d)=\varphi_{k,l}(c_{i,k})+(1-\varphi_{l,l})(c_{j,l})+c_{k,l}+\qquad\qquad\qquad\qquad\\
                        \qquad\qquad\qquad\qquad\varphi_{k,l}(\varphi_{i,k}(a)+(1-\varphi_{k,k})(c)) + (1-\varphi_{l,l})(\varphi_{j,l}(b)+(1-\varphi_{l,l})(d)).
\end{gather*}
The equality easily follows from (M3) and (M4). Left distributivity is an obvious consequence of mediality and idempotence.
\end{proof}

We will prove later that every medial quandle is the sum of an affine mesh. Nevertheless, the representation has a problem: the orbits of the sum need not coincide with the sets $A_i$, $i\in I$. For instance, taking $\varphi_{i,j}=0$ and $c_{i,j}=0$ for every $i,j$, we obtain the right projection quandle, where every singleton is an orbit. We need a notion of indecomposability of a mesh.

\begin{de}
An affine mesh $(A_i;\varphi_{i,j};c_{i,j})$ over a set $I$ is called \emph{indecomposable} if
$$A_j=\left<\bigcup_{i\in I}\left(c_{i,j}+\im(\varphi_{i,j})\right)\right>,$$ for every $j\in I$.
Equivalently, the group $A_j$ is generated by all elements $c_{i,j}$, $\varphi_{i,j}(a)$ with $i\in I$ and $a\in A_i$.
\end{de}

\begin{lemma}\label{lem:mesh2}
The sum of an indecomposable affine mesh $(A_i;\varphi_{i,j};c_{i,j})$ over a set $I$ is a medial quandle with orbits $A_i$, $i\in I$.
\end{lemma}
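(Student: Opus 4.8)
The plan is to invoke Lemma \ref{lem:mesh1} for the quandle axioms, so that the only remaining work is to identify the orbits. Write $h_j=1-\varphi_{j,j}$, which is an automorphism of $A_j$ by (M1), and denote the sum by $Q$. The first step is to observe that every left translation preserves each fibre: for $a\in A_i$ and $y\in A_j$ the definition of $*$ gives $L_a(y)=c_{i,j}+\varphi_{i,j}(a)+h_j(y)\in A_j$. Thus $L_a$ restricts to an affine map of $A_j$ whose linear part is $h_j$, independent of the choice of $a\in A_i$. In particular, every orbit of $Q$ is contained in a single fibre $A_j$.

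The key observation is that $\dis Q$ acts on each fibre by \emph{pure translations}. Indeed, by Proposition \ref{Pr:DQ}(1) every $\alpha\in\dis Q$ can be written as $L_{a_1}^{k_1}\cdots L_{a_n}^{k_n}$ with $\sum_i k_i=0$; restricting to $A_j$ and composing these affine maps, the linear part of $\alpha|_{A_j}$ is $h_j^{\sum_i k_i}=\mathrm{id}$. Hence $\alpha|_{A_j}$ is translation by some element $\tau_j(\alpha)\in A_j$, and $\tau_j\colon\dis Q\to A_j$ is a homomorphism into the additive group of $A_j$. Consequently the orbit of any $e\in A_j$ is $Qe=e+\im(\tau_j)$, and it remains only to prove $\im(\tau_j)=A_j$.

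To compute $\im(\tau_j)$, note it is generated by the translation amounts of the generators $L_aL_b^{-1}$ of $\dis Q$. Inverting $L_b$ on $A_j$ by means of $h_j^{-1}$ and composing, a direct calculation shows that for $a\in A_i$ and $b\in A_k$ the map $L_aL_b^{-1}$ acts on $A_j$ as translation by $c_{i,j}-c_{k,j}+\varphi_{i,j}(a)-\varphi_{k,j}(b)$. Specializing $k=j$ and $b=0$, and using $c_{j,j}=0$ from (M2), every element of the form $c_{i,j}+\varphi_{i,j}(a)$ with $i\in I$ and $a\in A_i$ lies in $\im(\tau_j)$. By the indecomposability hypothesis these elements generate $A_j$, so $\im(\tau_j)=A_j$ and therefore $Qe=e+A_j=A_j$ for every $e\in A_j$; that is, the orbits of $Q$ are exactly the fibres $A_i$.

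The affine-map bookkeeping in the first and third steps is routine. The two points that actually carry the argument are, first, recognizing that the defining condition $\sum_i k_i=0$ for $\dis Q$ forces the linear part $h_j^{\sum_i k_i}$ to trivialize on each fibre, so that the whole displacement group acts by translations and each orbit is a coset of $\im(\tau_j)$; and second, the slightly delicate specialization $k=j$, $b=0$ combined with $c_{j,j}=0$, which is precisely what converts the ``differences of constants plus images'' description of $\im(\tau_j)$ into the exact generating set $\bigcup_{i\in I}\bigl(c_{i,j}+\im(\varphi_{i,j})\bigr)$ appearing in the definition of indecomposability.
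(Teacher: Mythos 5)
Your proposal is correct and follows essentially the same route as the paper: both restrict $\dis Q$ to a fibre $A_j$, compute that $L_aL_b^{-1}$ acts there as translation by $c_{i,j}-c_{k,j}+\varphi_{i,j}(a)-\varphi_{k,j}(b)$, and then identify transitivity on $A_j$ with the indecomposability condition. Your packaging via Proposition \ref{Pr:DQ}(1) and the homomorphism $\tau_j$, and your single specialization $k=j$, $b=0$ yielding $c_{i,j}+\varphi_{i,j}(a)$ directly (where the paper specializes twice to get $c_{i,j}$ and $\varphi_{i,j}(a)$ separately), are only cosmetic variations.
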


\begin{proof}
We calculate the restriction $\dis Q|_{A_j}$ of the group $\dis Q$ on the subset $A_j$. For $x\in A_j$, $a\in A_k$ and $b\in A_l$ we have
\begin{align*}
L_a(x)&=c_{k,j}+\varphi_{k,j}(a)+(1-\varphi_{j,j})(x)\\
L_b^{-1}(x)&=(1-\varphi_{j,j})^{-1}(x-c_{l,j}-\varphi_{l,j}(b))
\end{align*}
and thus
$$L_aL_b^{-1}(x)=c_{k,j}-c_{l,j}+\varphi_{k,j}(a)-\varphi_{l,j}(b)+x.$$
Taking $k=i$, $l=j$, $a=0$ in $A_i$ and $b=0$ in $A_j$, we obtain the mapping $\alpha_i(x)=c_{i,j}+x$.
Taking $k=l=i$, $a\in A_i$ and $b=0$ in $A_i$, we obtain the mapping $\beta_{i,a}(x)=\varphi_{i,j}(a)+x$.
We see that the mappings $\alpha_i$ and $\beta_{i,a}$ generate the group $\dis Q|_{A_j}$.

Now notice that $\dis Q|_{A_j}$ is in fact a subgroup of the group $A_j$ acting on itself by translations. Hence it is transitive on $A_j$ if and only if it equals to $A_j$. This happens if and only if the elements $c_{i,j}$ (acting as mappings $\alpha_i$) and $\varphi_{i,j}(a)$ (acting as mappings $\beta_{i,a}$) generate the group $A_j$.
\end{proof}

\begin{example}
Consider the quandle $Q$ from Example \ref{ex:6elt}. We can represent it as the sum of an affine mesh in two ways:
\begin{itemize}
    \item Using the representation $Q=\aff{\Z_6,-1}$, we see $Q$ is the sum of the mesh $((\Z_6),\,(2),\,(0))$. However, this mesh is not indecomposable, $Q$ has two orbits.
    \item Using the orbit representation, we see $Q$ is the sum of the mesh
    $((\Z_3,\Z_3),\,\left(\begin{smallmatrix}2&2\\2&2\end{smallmatrix}\right),\,\left(\begin{smallmatrix}0&2\\1&0\end{smallmatrix}\right))$.
    This mesh is indecomposable.
\end{itemize}
\end{example}

The latter representation motivates the following definition.

\begin{de}
Let $Q$ be a medial quandle, and choose a transversal $E$ to the orbit decomposition. We define the \emph{canonical mesh} for $Q$ over the transversal $E$ as $\A_{Q,E}=(\orb Qe;\varphi_{e,f};c_{e,f})$ with $e,f\in E$, where for every $x\in Qe$
$$\varphi_{e,f}(x)=xf-ef\quad\text{and}\quad c_{e,f}=ef.$$
\end{de}

We will soon prove that $\A_{Q,E}$ is really an affine mesh. While it depends on the transversal $E$, all canonical meshes for $Q$ are ``similar", in a sense to be specified in the next section. Therefore, we will often say ``a canonical mesh of $Q$", but we really mean ``the canonical mesh for $Q$ over a transversal~$E$".

To simplify calculations below, we will use the following observation: for every $\alpha\in\dis Q$,
$$\varphi_{e,f}(\alpha(e))=[\alpha,L_e](f).$$
Indeed, $\varphi_{e,f}(\alpha(e))=\alpha(e)f-ef=L_{\alpha(e)}L_f^{-1}(f)-L_eL_f^{-1}(f)=L_{\alpha(e)}L_f^{-1}L_fL_e^{-1}(f)$, and using \eqref{auto}, we obtain $\alpha L_e\alpha^{-1}L_e^{-1}(f)=[\alpha,L_e](f)$.

\begin{lemma}\label{lem:mesh3}
Let $Q$ be a medial quandle and $\A_{Q,E}$ its canonical mesh. Then $\A_{Q,E}$ is an indecomposable affine mesh and $Q$ is equal to its sum.
\end{lemma}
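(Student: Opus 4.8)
The plan is to verify the four mesh axioms (M1)--(M4) directly for $\A_{Q,E}$, then establish indecomposability, and finally check that the sum of $\A_{Q,E}$ reproduces the original operation on $Q$. The key computational tool throughout will be the identity $\varphi_{e,f}(\alpha(e))=[\alpha,L_e](f)$ already derived in the excerpt, which converts statements about the homomorphisms $\varphi_{e,f}$ into statements about commutators in the abelian group $\dis Q$. Because $Q$ is medial, $\dis Q$ is abelian, so commutators behave very predictably, and I expect this to collapse most of the axiom verifications into routine manipulations.

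First I would confirm each axiom. For (M1), note that the fibre operation on $A_e=\orb Qe$ is $\aff{\orb Qe,L_e}$ by Proposition~\ref{prop:affine_orbit}, and since the displacement group acts regularly on each orbit (via the orbit group identification), $1-L_e$ is the relevant endomorphism; one checks $1-\varphi_{e,e}=L_e$ using $\varphi_{e,e}(x)=xe-ee=xe-e$, and $L_e$ is an automorphism of $\orb Qe$ by the remark following Lemma~\ref{lem:autQe}. Axiom (M2) is immediate: $c_{e,e}=ee=e$, which is the zero of $\orb Qe$. For (M3) and (M4), I would write everything in terms of commutators and exploit that, in an abelian group, conjugation collapses: for instance $[\alpha,L_e]$ depends on $\alpha$ and $L_e$ only through their values in $\dis Q$, and the commuting-square condition (M3) should reduce to the associativity/abelianness already encoded in Proposition~\ref{Pr:DQ}(3). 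Axiom (M4) relating $\varphi_{f,k}(c_{e,f})$ to $\varphi_{k,k}(c_{e,k}-c_{f,k})$ is the most delicate; I would expand each constant $c_{e,f}=ef$ as a displacement applied to the base point and again reduce to a commutator identity, expecting mediality to force the two sides into agreement.

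For indecomposability, I would show that $A_f=\orb Qf$ is generated by the elements $c_{e,f}=ef$ together with the images $\varphi_{e,f}(x)$. The natural argument is that the orbit $Qf$ is, by definition, the $\dis Q$-orbit of $f$, and $\dis Q$ is generated by maps of the form $L_aL_b^{-1}$; translating the action of these generators on $f$ through the orbit-group structure should express every element of $\orb Qf$ as a sum of the listed generators, using the formula for $L_aL_b^{-1}$ computed in the proof of Lemma~\ref{lem:mesh2}. Concretely, applying $L_aL_b^{-1}$ with $a,b$ ranging over the transversal and over arbitrary orbit elements produces exactly the constants $c_{e,f}$ and the homomorphism images, and transitivity of $\dis Q$ on $Qf$ forces these to generate.

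Finally, for the sum to equal $Q$, I would take $a\in Qe$, $b\in Qf$ and compute $a*b=c_{e,f}+\varphi_{e,f}(a)+(1-\varphi_{f,f})(b)$ in the orbit-group arithmetic, then verify this equals $a\cdot b$ in $Q$. Writing $a=\alpha(e)$, I substitute $\varphi_{e,f}(a)=[\alpha,L_e](f)$ and $c_{e,f}=ef=L_eL_f^{-1}(f)$, and reduce the right-hand side to $L_a(b)$ using the commutator identity and the definition of addition in $\orb Qf$; the computation parallels the proof of Proposition~\ref{prop:affine_orbit}. I expect the main obstacle to be axiom (M4): unlike the others it mixes the constants $c$ and the homomorphisms $\varphi$ across three distinct fibres, so keeping the base-point bookkeeping consistent while reducing to a single commutator identity in $\dis Q$ will require the most care.
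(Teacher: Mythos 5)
Your plan follows the paper's proof essentially verbatim: the same commutator identity $\varphi_{e,f}(\alpha(e))=[\alpha,L_e](f)$ together with commutativity of $\dis Q$ disposes of (M1)--(M4) (including your expected "delicate" case (M4), which collapses to $[L_eL_f^{-1},L_g]=[L_e,L_f]$), indecomposability is obtained exactly as in the paper by observing that $c_{e,f}+\im(\varphi_{e,f})=\{xf:x\in Qe\}$ and that the elements $xf$, i.e.\ the cosets of the generators $L_xL_f^{-1}$ of $\dis Q$, generate $\orb Qf$, and your closing computation recovering $a\cdot b$ from $c_{e,f}+\varphi_{e,f}(a)+(1-\varphi_{f,f})(b)$ is the one given there. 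The only points your sketch leaves implicit --- that each $\varphi_{e,f}$ is actually a group homomorphism $\orb Qe\to\orb Qf$ (which the paper verifies first via $[\alpha,L_e][\beta,L_e]=[\alpha\beta,L_e]$), and that for indecomposability you should compute the action of $L_aL_b^{-1}$ directly in $Q$ rather than cite the formula from Lemma~\ref{lem:mesh2}, which is derived for the sum of an abstract mesh and so is not yet available --- follow immediately from the same commutator calculus you invoke, so these are cosmetic rather than genuine gaps.
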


\begin{proof}
First notice that the orbit groups $\orb Qe$ are abelian groups with an underlying set $Qe$ (Proposition \ref{prop:affine_orbit}), the constants $c_{e,f}$ are in $Qf$, and we verify that the mappings $\varphi_{e,f}$ are homomorphisms $\orb Qe\to\orb Qf$. For $\alpha(e),\beta(e)\in Qe$ with $\alpha,\beta\in\dis Q$, we have
\begin{align*}
\varphi_{e,f}(\alpha(e))+\varphi_{e,f}(\beta(e))&=[\alpha,L_e](f)+[\beta,L_e](f)=[\alpha,L_e][\beta,L_e](f),\\
\varphi_{e,f}(\alpha(e)+\beta(e))&=\varphi_{e,f}(\alpha\beta(e))=[\alpha\beta,L_e](f),
\end{align*}
and using commutativity of $\dis Q$, we see that $$[\alpha,L_e][\beta,L_e]=\alpha(\alpha^{-1})^{L_e}\beta(\beta^{-1})^{L_e}=\alpha\beta(\alpha^{-1})^{L_e}(\beta^{-1})^{L_e}=[\alpha\beta,L_e].$$

Now we verify the properties (M1) to (M4). For (M1),
$$(1-\varphi_{e,e})(\alpha(e))=\alpha(e)-[\alpha,L_e](e)=\alpha[\alpha,L_e]^{-1}(e)=L_e(\alpha(e)),$$
hence $1-\varphi_{e,e}=L_e\in\aut{\orb Qe}$ according to Lemma \ref{lem:autQe}. In the last step, we again used commutativity of $\dis Q$ to show that
$$\alpha[\alpha,L_e]^{-1}=\alpha\alpha^{L_e}\alpha^{-1}=\alpha\alpha^{-1}\alpha^{L_e}=\alpha^{L_e}.$$
For (M2), we only notice that $c_{e,e}=e$ which is the zero element in $\orb Qe$. For (M3),
$$\varphi_{f,g}\varphi_{e,f}(\alpha(e))=\varphi_{f,g}([\alpha,L_e](f))=[[\alpha,L_e],L_f](g)=L_e^\alpha[\alpha,L_e]^{-1}L_e^{-1}(g),$$
hence is independent of $f$. Again, in the last step, commutativity yields
$$[[\alpha,L_e],L_f]=L_e^\alpha(L_e^{-1}L_f)[\alpha,L_e]^{-1}L_f^{-1}=L_e^\alpha[\alpha,L_e]^{-1}(L_e^{-1}L_f)L_f^{-1}=L_e^\alpha[\alpha,L_e]^{-1}L_e^{-1}.$$
For (M4),
\begin{align*}
\varphi_{f,g}(c_{e,f})&=\varphi_{f,g}(L_eL_f^{-1}(f))=[L_eL_f^{-1},L_f](g)=[L_e,L_f](g),\\
\varphi_{g,g}(c_{e,g}-c_{f,g})&=\varphi_{g,g}(L_eL_f^{-1}(g))=[L_eL_f^{-1},L_g](g),
\end{align*}
and, using commutativity again,
$$[L_eL_f^{-1},L_g]=L_eL_f^{-1}L_g(L_eL_f^{-1})^{-1}L_g^{-1}=L_e(L_eL_f^{-1})^{-1}L_f^{-1}L_gL_g^{-1}=[L_e,L_f].$$

Next we show that $\A_{Q,E}$ is indecomposable. Since $\im(\varphi_{e,f})=\{xf-ef:x\in Qe\}$, and $c_{e,f}=ef$, we see that
$c_{e,f}+\im(\varphi_{e,f})=\{xf:x\in Qe\}$, and taking the union we obtain $\bigcup_{e\in E}\{xf:x\in Qe\}=\{xf:x\in Q\}$. This set generates the group $\orb Qf$.

Finally, we verify that the sum yields back the original quandle $Q$: for $a\in Qe$, $b\in Qf$,
$$a*b=c_{e,f}+\varphi_{e,f}(a)+(1-\varphi_{f,f})(b)=ef+af-ef+b-bf+ff=af+b-bf+f,$$
and taking $\beta\in\dis Q$ such that $b=\beta(f)$, we obtain
$$a*b=(L_aL_f^{-1})\beta(L_bL_f^{-1})^{-1}(f)=(L_aL_f^{-1})(L_bL_f^{-1})^{-1}\beta(f)=L_aL_b^{-1}(b)=a\cdot b.$$
\end{proof}

Alternatively, we could have defined the canonical mesh using the groups $A_e=\dis Q/\dis Q_e$, homomorphisms $\varphi_{e,f}(\alpha\dis Q_e)=[\alpha,L_e]\dis Q_f$, and constants $c_{e,f}=L_eL_f^{-1}\dis Q_f$. Then the original quandle $Q$ is \emph{isomorphic} to the sum of the mesh, where the coset $\alpha\dis Q_e$ corresponds to the element $\alpha(e)\in Q$.

\begin{theorem}\label{thm:decomposition}
A binary algebra is a medial quandle if and only if it is the sum of an indecomposable affine mesh. The orbits of the quandle coincide with the groups of the mesh.
\end{theorem}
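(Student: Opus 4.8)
The plan is to assemble the theorem directly from the three lemmas already established, since each implication is covered by one of them and no new argument is needed. First I would dispatch the backward implication: suppose the binary algebra in question is the sum of an indecomposable affine mesh $(A_i;\varphi_{i,j};c_{i,j})$. Then Lemma \ref{lem:mesh2} applies verbatim, telling us simultaneously that the sum is a medial quandle and that its orbits are exactly the fibres $A_i$. This already delivers both the ``if'' direction and the orbit-correspondence claim on this side. (One could also invoke Lemma \ref{lem:mesh1} for the medial-quandle part alone, but Lemma \ref{lem:mesh2} gives everything at once.)

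For the forward implication, I would start from an arbitrary medial quandle $Q$, fix a transversal $E$ to its orbit decomposition, and form the canonical mesh $\A_{Q,E}$. By Lemma \ref{lem:mesh3}, $\A_{Q,E}$ is an indecomposable affine mesh and $Q$ coincides with its sum. Hence $Q$ is the sum of an indecomposable affine mesh, as required. For the orbit statement here, I would note that the groups of $\A_{Q,E}$ are by definition the orbit groups $\orb Qe$, whose underlying sets are precisely the orbits $Qe$ of $Q$; alternatively, one may simply apply Lemma \ref{lem:mesh2} to the sum $\A_{Q,E}$ to recover the orbits as the mesh fibres and then observe these are the original orbits of $Q$.

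The only point requiring a moment of care is consistency between the two notions of ``orbit'' appearing in the statement: the orbits of $Q$ as an abstract medial quandle, and the fibres $A_i$ playing the role of orbits in a mesh sum. In the backward direction this identification is furnished directly by Lemma \ref{lem:mesh2}; in the forward direction it is built into the construction of the canonical mesh. Since the heavy lifting, namely verifying (M1)--(M4), indecomposability, and that the sum operation recovers $\cdot$, was already done in Lemma \ref{lem:mesh3}, and the medial-quandle axioms for a mesh sum in Lemma \ref{lem:mesh1}, no genuine obstacle remains and the theorem is a synthesis rather than a fresh argument. If anything, the subtle part is purely bookkeeping: ensuring that the transversal-dependence of $\A_{Q,E}$ does not affect the truth of the transversal-free statement, which it does not, because the existence of a single suitable mesh already suffices.
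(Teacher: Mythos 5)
Your proposal is correct and follows exactly the paper's route: the paper's own proof is the one-line ``Combine Lemmas \ref{lem:mesh1}, \ref{lem:mesh2} and \ref{lem:mesh3}'', using Lemma \ref{lem:mesh2} for the ``if'' direction together with the orbit identification, and Lemma \ref{lem:mesh3} (via the canonical mesh over a transversal) for the ``only if'' direction. Your additional remarks on transversal-dependence and the two notions of ``orbit'' are sound bookkeeping that the paper leaves implicit.
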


\begin{proof}
Combine Lemmas \ref{lem:mesh1}, \ref{lem:mesh2} and \ref{lem:mesh3}.
\end{proof}

\begin{example}\label{ex:4}
There are exactly six medial quandles of size 4, up to isomorphism. They are the sums of the following indecomposable affine meshes:
\begin{itemize}
    \item One orbit: $((\Z_2^2),\,(\left(\begin{smallmatrix}1&1\\1&0\end{smallmatrix}\right)),\,(0)).$ (The endomorphism of the only fibre $\Z_2^2$ is given by a matrix.)
        \item Two orbits: $((\Z_3,\Z_1),\,\left(\begin{smallmatrix}0&0\\0&0\end{smallmatrix}\right),\,\left(\begin{smallmatrix}0&0\\1&0\end{smallmatrix}\right))$ and  $((\Z_2,\Z_2),\,\left(\begin{smallmatrix}0&0\\0&0\end{smallmatrix}\right),\,\left(\begin{smallmatrix}0&1\\1&0\end{smallmatrix}\right))$.
        \item Three orbits: $((\Z_2,\Z_1,\Z_1),\,\left(\begin{smallmatrix}0&0&0\\0&0&0\\0&0&0\end{smallmatrix}\right),\,\left(\begin{smallmatrix}0&0&0\\1&0&0\\c&0&0\end{smallmatrix}\right))$, where $c=0$ or $c=1$.
        \item Four orbits: $((\Z_1,\Z_1,\Z_1,\Z_1),\,0,\,0)$, where $0$ denotes the zero matrix.
\end{itemize}
By a careful analysis using Theorem \ref{thm:isomorphism} (see also Example \ref{ex:4iso}), one can prove that this is a complete list, and that the quandles are pairwise non-isomorphic.
\end{example}

We conclude the section with an easy fact that helps to cut the search space in the enumeration algorithm described in Section \ref{ssec:algorithms}, and will be used also in Section \ref{sec:latin_orbits} to discuss the size of latin orbits.

\begin{prop}\label{prop:gcd}
Let $\mathcal A=(A_i;\,\varphi_{i,j};\,c_{i,j})$ be an affine mesh over a set $I$. Then $|\im(\varphi_{i,i}^2)|$ divides $\gcd(|A_j|:j\in I)$ for every $i\in I$.
\end{prop}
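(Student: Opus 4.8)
The plan is to exploit the commutativity condition (M3), which says that the two-step composition $\varphi_{j,k}\varphi_{i,j}$ does not depend on the intermediate index $j$. The crucial observation I would record first is the identity $\varphi_{i,i}^2=\varphi_{j,i}\varphi_{i,j}$, valid for every $j\in I$. To see this, apply (M3) with middle indices $j$ and $i$ and target $k$: it gives $\varphi_{j,k}\varphi_{i,j}=\varphi_{i,k}\varphi_{i,i}$ for all $j,k$; specializing $k=i$ yields $\varphi_{j,i}\varphi_{i,j}=\varphi_{i,i}\varphi_{i,i}=\varphi_{i,i}^2$. This is the one step that carries the whole argument.

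With this identity in hand, the divisibility is a routine chase through subgroups and quotients. Fix $i,j\in I$. Since $\varphi_{i,i}^2=\varphi_{j,i}\varphi_{i,j}$, its image equals $\varphi_{j,i}(\im(\varphi_{i,j}))$, which is a subgroup of $\im(\varphi_{j,i})\leq A_i$. By Lagrange's theorem, $|\im(\varphi_{i,i}^2)|$ divides $|\im(\varphi_{j,i})|$. On the other hand, $\im(\varphi_{j,i})\simeq A_j/\ker(\varphi_{j,i})$ by the first isomorphism theorem, so $|\im(\varphi_{j,i})|$ divides $|A_j|$. Chaining these two facts gives that $|\im(\varphi_{i,i}^2)|$ divides $|A_j|$.

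Finally, since $j\in I$ was arbitrary, $|\im(\varphi_{i,i}^2)|$ is a common divisor of all the cardinalities $|A_j|$, hence divides their greatest common divisor. Note that conditions (M1), (M2) and (M4) play no role here; the statement is purely a consequence of (M3). I do not expect a genuine obstacle: the only non-obvious move is the specialization of (M3) producing $\varphi_{i,i}^2=\varphi_{j,i}\varphi_{i,j}$, and once that is spotted the rest is immediate. (If infinite fibres are permitted, the divisibility relations are read among cardinals in the obvious way and the gcd is governed by the finite fibres; for the finite quandles that are the focus of the paper there is nothing further to check.)
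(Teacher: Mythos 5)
Your proof is correct and is essentially identical to the paper's: both extract the identity $\varphi_{i,i}^2=\varphi_{j,i}\varphi_{i,j}$ from (M3), observe $\im(\varphi_{i,i}^2)\leq\im(\varphi_{j,i})\simeq A_j/\mathrm{Ker}(\varphi_{j,i})$, and conclude that $|\im(\varphi_{i,i}^2)|$ divides every $|A_j|$ and hence their gcd. No gaps; your derivation of the key identity is exactly the specialization the paper uses.
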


\begin{proof}
Fix $i\in I$. Condition (M3) implies that $\varphi_{i,i}^2=\varphi_{j,i}\varphi_{i,j}$ for every $j\in I$, hence
$$\im(\varphi_{i,i}^2)\leq\im(\varphi_{j,i})\simeq A_j/\mathrm{Ker}(\varphi_{j,i}).$$
Consequently, $|\im(\varphi_{i,i}^2)|$ divides $|A_j|$ for every $j\in I$, hence also their gcd.
\end{proof}

\section{Isomorphism theorem} \label{sec:iso}

\begin{de}
We call two affine meshes $\mathcal A=(A_i;\varphi_{i,j};c_{i,j})$
and $\mathcal A'=(A_i';\varphi_{i,j}';c_{i,j}')$, over the same
index set $I$, \emph{homologous}, if there is a permutation $\pi$ of
the set $I$, group isomorphisms $\psi_i:A_i\to A_{\pi i}'$, and
constants $d_i\in A_{\pi i}'$, such that, for every $i,j\in I$,
\begin{enumerate}
    \item[(H1)] $\psi_j\varphi_{i,j}=\varphi_{\pi i,\pi j}'\psi_i$, i.e., the following diagram commutes:
$$ \begin{CD}
A_i @>\varphi_{i,j}>> A_j\\ @VV\psi_iV @VV\psi_jV\\
A_{\pi i}' @>\varphi_{\pi i,\pi j}'>> A_{\pi j}'
\end{CD}$$
    \item[(H2)] $\psi_j(c_{i,j})=c_{\pi i,\pi j}'+\varphi_{\pi i,\pi j}'(d_i)-\varphi_{\pi j,\pi j}'(d_j)$.
\end{enumerate}
\end{de}

\begin{theorem}\label{thm:isomorphism}
Let $\mathcal A=(A_i;\varphi_{i,j};c_{i,j})$ and $\mathcal A'=(A_i';\varphi_{i,j}';c_{i,j}')$ be two indecomposable affine meshes, over the same index set $I$.
Then the sums of $\A$ and $\A'$ are isomorphic quandles if and only if the meshes $\A$, $\A'$ are homologous.
\end{theorem}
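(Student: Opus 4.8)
The plan is to prove both directions by translating the algebraic relationship ``homologous'' into the existence of a quandle isomorphism, and conversely. Throughout, let $Q$ and $Q'$ be the sums of $\A$ and $\A'$ respectively, so that by Theorem \ref{thm:decomposition} the orbits of $Q$ are exactly the fibres $A_i$, and likewise for $Q'$. The key structural fact I would exploit is that any quandle isomorphism $h\colon Q\to Q'$ must permute orbits, since orbits are defined purely in terms of $\dis Q$, which is preserved by isomorphisms; this gives a permutation $\pi$ of $I$ with $h(A_i)\subseteq A_{\pi i}'$. This $\pi$ is the candidate permutation for the homology.

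For the direction $(\Leftarrow)$, I would assume the meshes are homologous, with data $\pi$, $(\psi_i)$, $(d_i)$, and construct an explicit isomorphism $h\colon Q\to Q'$ by setting $h(a)=\psi_i(a)+d_i$ for $a\in A_i$. Each $\psi_i$ is a group isomorphism $A_i\to A_{\pi i}'$, so $h$ is a bijection between the disjoint unions. The work is then to verify that $h$ preserves the quandle operation, i.e.\ that $h(a*b)=h(a)*'h(b)$ for $a\in A_i$, $b\in A_j$. Expanding both sides using the definition of the sum of a mesh, the left side becomes $\psi_j\bigl(c_{i,j}+\varphi_{i,j}(a)+(1-\varphi_{j,j})(b)\bigr)+d_j$ and the right side $c_{\pi i,\pi j}'+\varphi_{\pi i,\pi j}'(\psi_i(a)+d_i)+(1-\varphi_{\pi j,\pi j}')(\psi_j(b)+d_j)$. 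Conditions (H1) and (H2) are exactly what is needed to make these agree, after a routine regrouping of the constant terms and the $\varphi$-terms; this is bookkeeping rather than a genuine obstacle.

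For the converse $(\Rightarrow)$, I would start from a quandle isomorphism $h\colon Q\to Q'$ and the induced orbit permutation $\pi$. Here the affine structure of each orbit (Proposition \ref{prop:affine_orbit}) is decisive. Restricted to the fibre $A_i$, the map $h$ is an isomorphism of affine quandles $\aff{A_i,1-\varphi_{i,i}}\to\aff{A_{\pi i}',1-\varphi_{\pi i,\pi i}'}$. Since any such affine-quandle isomorphism is an affine map of the underlying groups, $h$ restricted to $A_i$ has the form $a\mapsto\psi_i(a)+d_i$ for a group isomorphism $\psi_i$ and a constant $d_i\in A_{\pi i}'$. This recovers the homology data $(\pi,\psi_i,d_i)$, and the remaining task is to show that these data satisfy (H1) and (H2). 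I would do this by applying the homomorphism property $h(a*b)=h(a)*'h(b)$ for $a\in A_i$, $b\in A_j$, substituting the affine forms of $h$, and comparing coefficients. Comparing the linear-in-$a$ parts forces (H1); comparing the constant parts forces (H2). A subtle point worth isolating is why the same $\psi_i$ (determined by the behaviour of $h$ inside a single orbit) must be compatible across different orbits in the sense of (H1), which couples fibres $i$ and $j$; indecomposability guarantees that the operation genuinely links the fibres, so no freedom is left unconstrained.

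The main obstacle I anticipate is the converse direction, specifically justifying that an isomorphism of affine quandles lifts to an affine map of the underlying abelian groups in a canonical enough way to extract well-defined $\psi_i$ and $d_i$. The orbit groups $\orb{Q}{e}$ are defined with a chosen basepoint, and $\psi_i$ is only pinned down once one fixes that $h$ sends the zero of $A_i$ to some element of $A_{\pi i}'$, which is absorbed into $d_i$; one must check this choice is consistent and that $\psi_i$ is genuinely a group homomorphism and not merely an affine bijection of sets. Establishing this cleanly, and then verifying that the extracted data are forced to satisfy the coupling condition (H1) across distinct orbits, is where the real content lies, whereas the forward direction and the algebraic expansions are essentially mechanical.
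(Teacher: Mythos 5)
Your forward direction coincides with the paper's: define $h(a)=\psi_i(a)+d_i$ on each fibre and verify multiplicativity via (H1) and (H2); that part is fine. The genuine gap is in the converse, at exactly the point you flag but never resolve. Your argument rests on the claim that ``any affine-quandle isomorphism is an affine map of the underlying groups,'' and this is simply false when the orbit subquandles are not connected --- which is the typical situation for an indecomposable mesh. The extreme case is a $2$-reductive quandle: there every orbit is a projection quandle $\aff{A_i,1}$, whose operation $a*b=b$ carries no information whatsoever about the group $A_i$ (indeed $\aff{\Z_4,1}$ and $\aff{\Z_2^2,1}$ are isomorphic as quandles over non-isomorphic groups, via any bijection). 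So the restriction of a quandle isomorphism $f$ to a single orbit is, from the orbit's internal structure alone, an arbitrary bijection, and nothing forces $\psi_i(x)=f(x)-d_i$ to be additive. Additivity of the $\psi_i$ is a global constraint, not extractable one orbit at a time; your own closing paragraph concedes this is ``where the real content lies,'' but the only justification you offer for it is the false single-orbit claim.

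This is precisely where the paper's proof does its real work, and where indecomposability actually enters --- note that your proposal invokes indecomposability only to support the cross-fibre compatibility (H1), which is not its role. Setting $d_j=f(0_j)$ and $\psi_j=f-d_j$, the paper evaluates $f(0_j*b)$, $f(a*0_j)$ and the general $f(a*b)$ for $a\in A_i$, $b\in A_j$, with $i$ ranging over all of $I$. The first two comparisons give auxiliary identities (the second yields (H2) at $a=0_i$); combining all three and substituting $y=(1-\varphi_{j,j})(b)$ --- legitimate since $1-\varphi_{j,j}$ is a permutation by (M1) --- produces $\psi_j\bigl(c_{i,j}+\varphi_{i,j}(a)+y\bigr)=\psi_j\bigl(c_{i,j}+\varphi_{i,j}(a)\bigr)+\psi_j(y)$ for all $i\in I$, $a\in A_i$, $y\in A_j$. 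Only now does indecomposability act: because the elements $c_{i,j}+\varphi_{i,j}(a)$ generate $A_j$, this identity propagates to $\psi_j(x+y)=\psi_j(x)+\psi_j(y)$ for all $x,y\in A_j$, so $\psi_j$ is a group isomorphism; (H1) then drops out of the earlier identity by cancellation against (H2), with no further input. In short, the conclusion of your single-orbit step is true, but only as an output of the cross-orbit computation plus indecomposability, not as an input to it; without that argument your converse does not go through, and the fact that your sketch leaves the indecomposability hypothesis essentially unused in establishing the $\psi_i$ should itself have been a warning sign.
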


Notice the ``if" implication holds for arbitrary meshes (not just indecomposable).

\begin{proof}
$(\Leftarrow)$
We define a mapping $\psi:\bigcup A_i\to\bigcup A_i'$ by $$\psi(a)=\psi_i(a)+d_i$$ for every $a\in A_i$, and prove that $\psi$ is a quandle isomorphism between the sums. It is clearly a bijection. Let $a\in A_i$, $b\in A_j$. On one side, using the fact that $\psi_j$ is a group homomorphism,
\begin{align*}
\psi(a*b)=\psi_j(a*b)+d_j
&=\psi_j(c_{i,j}+\varphi_{i,j}(a)+(1-\varphi_{j,j})(b))+d_j\\
&=\left(\psi_j\varphi_{i,j}(a)+\psi_j(1-\varphi_{j,j})(b)\right) + \left(\psi_j(c_{i,j})+d_j\right).
\end{align*}
On the other side,
\begin{align*}
\psi(a)*'\psi(b)&=(\psi_i(a)+d_i)*'(\psi_j(b)+d_j)\\
&=c_{\pi i,\pi j}'+\varphi_{\pi i,\pi j}'(\psi_i(a)+d_i)+(1-\varphi_{\pi j,\pi j}')(\psi_j(b)+d_j)\\
&=\left(\varphi_{\pi i,\pi j}'\psi_i(a)+(1-\varphi_{\pi j,\pi j}')\psi_j(b)\right) + \left(c_{\pi i,\pi j}'+\varphi_{\pi i,\pi j}'(d_i)+(1-\varphi_{\pi j,\pi j}')(d_j)\right).
\end{align*}
We see the two expressions are equal using (H1) in the former summand and (H2) in the latter.

$(\Rightarrow)$
Let $f$ be a quandle isomorphism between the two sums. Since isomorphisms preserve orbits, there is a permutation $\pi$ of $I$ such that $f(A_i)=A_{\pi i}'$ for every $i\in I$. Let $0_i$ denote the zero element in the group $A_i$. Put $d_i=f(0_i)$ and define the mappings
\begin{equation*}
\psi_i\colon A_i\to A_{\pi i}',\qquad x\mapsto f(x)-d_i.
\end{equation*}
First, we derive two auxilliary identities, the latter being a stronger version of (H2). Then, we show that all mappings $\psi_i$ are group isomorphisms and verify condition (H1).

Let $i,j\in I$, $a\in A_i$, $b\in A_j$. Consider the value $f(0_j*b)$.
On one hand, using the definition of $\psi_j$,
$$f(0_j*b)=f((1-\varphi_{j,j})(b))=\psi_j((1-\varphi_{j,j})(b))+d_j.$$
On the other hand, using that $f$ preserves $*$,
\begin{align*}
f(0_j\ast b)=f(0_j)\ast' f(b)=d_j\ast' f(b)&= \varphi_{\pi j,\pi j}'(d_j)+(1-\varphi_{\pi j,\pi j}')(f(b))\\
&=\varphi_{\pi j,\pi j}'(d_j)+(1-\varphi_{\pi j,\pi j}')(\psi_j(b)+d_j)=(1-\varphi_{\pi j,\pi j}')(\psi_j(b))+d_j.
\end{align*}
Cancelling $d_j$, we obtain
\begin{equation}\label{eq:1}
\psi_j((1-\varphi_{j,j})(b))=(1-\varphi_{\pi j,\pi j}')(\psi_j(b)).
\end{equation}

For the next identity, consider the value $f(a*0_j)$. On one hand,
$$f(a*0_j)=f(c_{i,j}+\varphi_{i,j}(a))=\psi_j(c_{i,j}+\varphi_{i,j}(a))+d_j.$$
On the other hand,
\begin{align*}
f(a\ast 0_j)=f(a)\ast' f(0_j)=f(a)\ast' d_j &=c_{\pi i,\pi j}'+ \varphi_{\pi i,\pi j}'(f(a))+(1-\varphi_{\pi j,\pi j}')(d_j)\\
&=c_{\pi i,\pi j}'+ \varphi_{\pi i,\pi j}'(\psi_i(a)+d_i)+(1-\varphi_{\pi j,\pi j}')(d_j).
\end{align*}
Cancelling $d_j$, we obtain
\begin{equation}\label{eq:2}
\psi_j(c_{i,j}+\varphi_{i,j}(a))=c_{\pi i,\pi j}'+ \varphi_{\pi i,\pi j}'(\psi_i(a)+d_i)-\varphi_{\pi j,\pi j}'(d_j).
\end{equation}
Setting $a=0_i$, we immediately obtain condition (H2).

To verify that the mappings $\psi_j$ are automorphisms, consider a general product $f(a*b)$. On one hand,
$$f(a*b)=f(c_{i,j}+ \varphi_{i,j}(a)+(1-\varphi_{j,j})(b))=\psi_j(c_{i,j}+ \varphi_{i,j}(a)+(1-\varphi_{j,j})(b))+d_j.$$
On the other hand,
\begin{align*}
f(a\ast b)=f(a)\ast' f(b)
&=c_{\pi i,\pi j}'+ \varphi_{\pi i,\pi j}'(\psi_i(a)+d_i)+(1-\varphi_{\pi j,\pi j}')(\psi_j(b)+d_j)\\
&\stackrel{\eqref{eq:2}}{=} \psi_j(c_{i,j}+\varphi_{i,j}(a))+(1-\varphi_{\pi j,\pi j}')(\psi_j(b))+d_j\\
&\stackrel{\eqref{eq:1}}{=} \psi_j(c_{i,j}+\varphi_{i,j}(a))+\psi_j((1-\varphi_{j,j})(b))+d_j.
\end{align*}
Cancelling $d_j$, substituting $y=(1-\varphi_{j,j})(b)$, and using the fact that $1-\varphi_{j,j}$ is a permutation, we obtain
\begin{equation}\label{eq:3}
\psi_j(c_{i,j}+\varphi_{i,j}(a)+y)=
\psi_j(c_{i,j}+\varphi_{i,j}(a))+\psi_j(y)
\end{equation}
for every $a\in A_i$ and every $y\in A_j$.
Assuming the mesh is indecomposable, every group $A_j$ is generated by all elements $c_{i,j}+\varphi_{i,j}(a)$, $i\in I$, $a\in A_i$.
Hence \eqref{eq:3} implies $\psi_j(x+y)=\psi_j(x)+\psi_j(y)$ for every $x,y\in A_j$, i.e., $\psi_j$ is an automorphism.

Now, we can reuse equation \eqref{eq:2}: expand both sides using the fact that both $\psi_j$ and $\varphi_{\pi i,\pi j}'$ are homomorphisms, obtaining
$$\psi_j(c_{i,j})+\psi_j(\varphi_{i,j}(a))=c_{\pi i,\pi j}'+ \varphi_{\pi i,\pi j}'(\psi_i(a))+\varphi_{\pi i,\pi j}'(d_i)-\varphi_{\pi j,\pi j}'(d_j),$$
and use (H2) to cancel, obtaining $\psi_j(\varphi_{i,j}(a))=\varphi_{\pi i,\pi j}'(\psi_i(a))$, i.e., condition (H1).
\end{proof}

\begin{corollary}\label{c:iso_conn}
Two connected affine quandles $\Aff(A,f)$, $\Aff(B,g)$ are isomorphic if and only if there is a group isomorphism $\psi:A\to B$ such that $g=f^\psi$.
\end{corollary}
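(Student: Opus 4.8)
The plan is to derive this as a special case of the Isomorphism Theorem \ref{thm:isomorphism}. A connected affine quandle $\Aff(A,f)$ consists of a single orbit, so it is the sum of an indecomposable affine mesh over a one-element index set $I=\{0\}$: the single fibre is $A$, the lone homomorphism is $\varphi_{0,0}=1-f$ (since $\Aff(A,f)=\aff{A,1-\varphi_{0,0}}$ forces $1-\varphi_{0,0}=f$, i.e. $\varphi_{0,0}=1-f$), and the single constant is $c_{0,0}=0$ by (M2). The connectedness (equivalently, indecomposability over a singleton index set) is guaranteed because $1-f$ onto is exactly the condition for $\Aff(A,f)$ to be connected, as noted in the excerpt; over a singleton, indecomposability says $A=\langle\im(1-f)\rangle$, which holds when $1-f$ is onto. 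Similarly $\Aff(B,g)$ is the sum of the singleton mesh with fibre $B$, homomorphism $1-g$, and constant $0$.

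With both quandles presented as sums of indecomposable meshes over the same (one-point) index set, Theorem \ref{thm:isomorphism} says the sums are isomorphic if and only if the two meshes are homologous. First I would unwind what homology means here. The permutation $\pi$ of the singleton $I$ is forced to be the identity. Condition (H1) reduces to a single commuting square $\psi_0(1-f)=(1-g)\psi_0$ for a group isomorphism $\psi_0\colon A\to B$, which is equivalent to $\psi_0 f=g\psi_0$, i.e. $g=f^{\psi_0}=\psi_0 f\psi_0^{-1}$. Condition (H2) reads $\psi_0(c_{0,0})=c_{0,0}'+\varphi_{0,0}'(d_0)-\varphi_{0,0}'(d_0)$, that is $0=0$, so it is vacuous and the constant $d_0$ can be chosen freely (e.g. $d_0=0$). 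Hence homology of the two singleton meshes is precisely the existence of a group isomorphism $\psi\colon A\to B$ with $g=f^\psi$.

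Putting the two halves together: the sums are isomorphic iff the meshes are homologous iff there is a group isomorphism $\psi\colon A\to B$ with $g=f^\psi$, which is exactly the claimed statement. I would write this up by explicitly exhibiting the singleton-mesh presentation, invoking the theorem, and then reading off (H1) and (H2) as above.

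I do not expect a genuine obstacle here; the only point requiring a word of care is that Theorem \ref{thm:isomorphism} requires \emph{indecomposable} meshes, so I must confirm that connectedness of $\Aff(A,f)$ (i.e. $1-f$ onto, as recorded in the excerpt) really does translate into indecomposability of the singleton mesh. One should also check the degenerate hypothesis (M1), that $1-\varphi_{0,0}=f$ is an automorphism, which is guaranteed since $f$ is an automorphism by the definition of affine quandle. With these two sanity checks in place, the corollary follows mechanically from the theorem.
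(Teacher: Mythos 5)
Your proposal is correct and follows essentially the same route as the paper: the published proof is precisely the specialization of Theorem \ref{thm:isomorphism} to a singleton index set with $\varphi_{1,1}=1-f$, $\varphi'_{1,1}=1-g$, reading off $g=f^\psi$ from (H1) and noting (H2) is vacuous since $c_{1,1}=c'_{1,1}=0$. Your additional sanity checks --- that connectedness (i.e.\ $1-f$ onto) gives indecomposability of the singleton mesh, and that (M1) holds because $f$ is an automorphism --- are accurate and merely make explicit what the paper leaves implicit.
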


\begin{proof}
The statement refers to the case $I=\{1\}$, $\varphi_{1,1}=1-f$, $\varphi_{1,1}'=1-g$. Condition (H1) is equivalent to $g=f^\psi$. Condition (H2) is satisfied trivially regardless the value of $d_1$, because $c_{1,1}=0$ and $c_{1,1}'=0$.
\end{proof}

\begin{example}\label{ex:4iso}
We illustrate the theorem on some of the quandles of size 4, see also Example \ref{ex:4}.
\begin{itemize}
    \item Consider two meshes
$$((\Z_2^2),\,(\left(\begin{smallmatrix}1&1\\1&0\end{smallmatrix}\right)),\,(0))
\quad\text{ and }\quad
((\Z_2^2),\,(\left(\begin{smallmatrix}0&1\\1&1\end{smallmatrix}\right)),\,(0)).$$
    The matrices $\left(\begin{smallmatrix}1&1\\1&0\end{smallmatrix}\right)$ and $\left(\begin{smallmatrix}0&1\\1&1\end{smallmatrix}\right)$ are conjugate by a matrix $A$. The two meshes are homologous, with $\psi_1(x)=Ax$ and $d_1=0$.
  \item Consider two meshes $$((\Z_3,\Z_1),\,\left(\begin{smallmatrix}0&0\\0&0\end{smallmatrix}\right),\,\left(\begin{smallmatrix}0&0\\1&0\end{smallmatrix}\right))
\quad\text{ and }\quad ((\Z_3,\Z_1),\,\left(\begin{smallmatrix}0&0\\0&0\end{smallmatrix}\right),\,\left(\begin{smallmatrix}0&0\\2&0\end{smallmatrix}\right)).$$
The two meshes are homologous, with $\pi=id$, $\psi_1(x)=-x$, $\psi_2=id$ and $d_1=d_2=0$.
    \item Consider two meshes
$$((\Z_2,\Z_1,\Z_1),\,\left(\begin{smallmatrix}0&0&0\\0&0&0\\0&0&0\end{smallmatrix}\right),\,\left(\begin{smallmatrix}0&0&0\\1&0&0\\0&0&0\end{smallmatrix}\right))
\quad\text{ and }\quad
((\Z_2,\Z_1,\Z_1),\,\left(\begin{smallmatrix}0&0&0\\0&0&0\\0&0&0\end{smallmatrix}\right),\,\left(\begin{smallmatrix}0&0&0\\0&0&0\\1&0&0\end{smallmatrix}\right)).$$
The two meshes are homologous, with $\pi=(2\ 3)$, $\psi_1=\psi_2=\psi_3=id$ and $d_1=d_2=d_3=0$.
\end{itemize}
\end{example}

The next example shows that, in the definition of homologous meshes, we have to consider the constants $d_i$.

\begin{example}
Consider two meshes
$$((\Z_3,\Z_3),\,\left(\begin{smallmatrix}2&1\\1&2\end{smallmatrix}\right),\,\left(\begin{smallmatrix}0&0\\0&0\end{smallmatrix}\right))
\quad\text{ and }\quad
((\Z_3,\Z_3),\,\left(\begin{smallmatrix}2&1\\1&2\end{smallmatrix}\right),\,\left(\begin{smallmatrix}0&1\\1&0\end{smallmatrix}\right)).$$
To show that the two meshes are homologous, without loss of generality put $\pi=id$ (due to symmetry). Condition (H1) for $i=1$, $j=2$ implies that $\psi_1=\psi_2$. Condition (H2) for $i=1$, $j=2$ says that $\psi_2(0)=1+d_1-2d_2$, hence we cannot have both $d_1=d_2=0$. One can check that $\psi_1=\psi_2=id$, $d_1=0$, $d_2=2$ satisfies all conditions.
\end{example}

\begin{remark}\label{rem:iso_thm}
Homology of affine meshes can be restated in terms of a group action. Let $A_j$, $j\in J$, be pairwise non-isomorphic abelian groups and $n_j$, $j\in J$, cardinal numbers. Consider the set $X$ of all indecomposable affine meshes with $n_j$ fibres equal to $A_j$. Formally, $X$ consists of all meshes
$(B_i;\,\varphi_{i,j};\,c_{i,j})$ over the index set $I=\sum n_j$ such that the tuple $(B_i:i\in I)$ is obtained from $(A_j:j\in J)$ by replacing each $A_j$ with $n_j$ copies of itself. Then two meshes $\mathcal A=(B_i;\,\varphi_{i,j};\,c_{i,j})$ and $\mathcal A'=(B_i;\,\varphi_{i,j}';\,c_{i,j}')$ are homologous if and only if $g(\mathcal A)=\mathcal A'$ for some $g\in G$, where
$$G=\prod_{j\in J} (A_j\rtimes\aut{A_j})\wr S_{n_j}=\left(\prod_{i\in I} (B_i\rtimes\aut{B_i})\right)\rtimes S$$
where $S$ contains all permutations $\pi\in S_I$ such that $\pi(B_i)\simeq B_i$ (in particular, $S\simeq\prod_{j\in J} S_{n_j}$). The action of an element $g=(\bar d,\bar\psi,\pi)\in G$ on $X$ is defined by
$$g(B_i;\,\varphi_{i,j};\,c_{i,j})=\left(B_i;\,\psi_j^{-1}\varphi_{\pi i,\pi j}\psi_i;\,\psi_j^{-1}(c_{\pi i,\pi j})+\psi_j^{-1}\varphi_{\pi i,\pi j}(d_i)-\psi_j^{-1}\varphi_{\pi j,\pi j}(d_j)\right).$$
This interpretation of homology will be useful in the enumeration of medial quandles in Section~\ref{sec:enumeration}.
\end{remark}

\section{Latin orbits}\label{sec:latin_orbits}

The orbits in a medial quandle need not be algebraically connected (as subquandles). In this section, we investigate the ``most structural" case, when all orbits are latin, while the next section partly addresses the ``structureless" case, when all orbits are projection quandles.

It follows from Proposition \ref{prop:gcd} that in a medial quandle, only the smallest orbits can be latin, and only if their size divides the size of any other orbit. In particular, if all orbits are latin, then they have equal size.
The highlight of this section is a somewhat surprising Theorem \ref{thm:latin_orbits} saying that all such quandles are direct products of a latin quandle and a projection quandle. For finite quandles, we get a stronger statement that can be rephrased in the following way: every finite latin medial quandle $Q$ can be extended uniquely to a medial quandle with a given number of orbits of size $|Q|$.

We start with two important observations on medial quandles with latin orbits. Notice that an orbit $Qe$ is latin if and only if, in the canonical mesh of $Q$ over a transversal $E$ containing $e$, the mapping $\varphi_{e,e}$ is a permutation.

\begin{prop}\label{p:finite_latin_orbit}
Consider a medial quandle such that all orbits have equal finite size and one of them is latin (as a subquandle). Then all orbits are latin.
\end{prop}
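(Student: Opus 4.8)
The plan is to work entirely inside the canonical mesh $\A_{Q,E}=(\orb Qe;\varphi_{e,f};c_{e,f})$ of $Q$ over the chosen transversal $E$. As recalled just before the statement, an orbit $Qe$ is latin precisely when $\varphi_{e,e}$ is a permutation of $\orb Qe$. Writing $n$ for the common finite size of all orbits, the task therefore reduces to a purely mesh-theoretic assertion: if $\varphi_{e,e}$ is bijective for one $e\in E$, then $\varphi_{f,f}$ is bijective for every $f\in E$. The constants $c_{e,f}$ and condition (M4) will play no role whatsoever; everything hinges on the composition law (M3).

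The key algebraic fact is the identity already extracted from (M3) in the proof of Proposition \ref{prop:gcd}, namely $\varphi_{i,i}^2=\varphi_{j,i}\varphi_{i,j}$ for all indices $i,j$. I would first apply it with $i=e$, $j=f$ to obtain $\varphi_{e,e}^2=\varphi_{f,e}\varphi_{e,f}$. Since $\varphi_{e,e}$ is assumed bijective, so is $\varphi_{e,e}^2$, and hence the composite $\varphi_{f,e}\varphi_{e,f}\colon \orb Qe\to\orb Qe$ is bijective. This forces the first factor $\varphi_{e,f}$ to be injective and the second factor $\varphi_{f,e}$ to be surjective. Because all the orbit groups are finite of the same size $n$, injectivity and surjectivity each upgrade to bijectivity, so both $\varphi_{e,f}$ and $\varphi_{f,e}$ are in fact group isomorphisms.

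Next I would run the same identity the other way, with $i=f$, $j=e$, obtaining $\varphi_{f,f}^2=\varphi_{e,f}\varphi_{f,e}$. Its right-hand side is a composite of the two isomorphisms just produced, hence a bijection of $\orb Qf$; therefore $\varphi_{f,f}^2$ is bijective, whence $\varphi_{f,f}$ is injective and, once more by finiteness of $\orb Qf$, bijective. By the latin criterion recalled above, $Qf$ is latin, and since $f\in E$ was arbitrary, every orbit is latin.

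The only genuinely load-bearing hypothesis is that the orbits are finite and of equal size: this is exactly what converts the one-sided injectivity and surjectivity statements about $\varphi_{e,f}$ and $\varphi_{f,e}$ into full bijectivity. I do not anticipate any serious obstacle beyond keeping the index pattern of the two invocations of the $\varphi_{i,i}^2=\varphi_{j,i}\varphi_{i,j}$ relation straight; the whole argument is a short piece of cardinality bookkeeping layered on that single commutation relation.
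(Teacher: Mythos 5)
Your proposal is correct and follows essentially the same route as the paper's proof: both extract $\varphi_{e,e}^2=\varphi_{f,e}\varphi_{e,f}$ and $\varphi_{f,f}^2=\varphi_{e,f}\varphi_{f,e}$ from (M3), deduce injectivity of $\varphi_{e,f}$ and surjectivity of $\varphi_{f,e}$, and upgrade to bijectivity via the equal finite orbit sizes. Your observations that (M4) and the constants play no role, and that equal finite size is the load-bearing hypothesis, match the paper's argument exactly.
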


\begin{proof}
Consider the canonical mesh of such a quandle $Q$ over a transversal $E$ containing $e$, let $Qe$ be a latin orbit. Then $\varphi_{e,e}$ is a permutation. Consider an arbitrary $f\in E$. By (M3), we have $\varphi_{e,e}^2=\varphi_{f,e}\varphi_{e,f}$, hence the mapping $\varphi_{e,f}$ is 1-1 and $\varphi_{f,e}$ is onto. But all orbits have equal finite size, hence both $\varphi_{e,f},\varphi_{f,e}$ are bijections, and so is $\varphi_{f,f}$, because $\varphi_{f,f}^2=\varphi_{e,f}\varphi_{f,e}$ by (M3). Hence all orbits are latin.
\end{proof}

\begin{prop}\label{p:latin_orbits}
Consider a medial quandle such that all orbits are latin. Then
\begin{enumerate}
    \item all orbit groups are isomorphic;
    \item all orbits are isomorphic as quandles.
\end{enumerate}
\end{prop}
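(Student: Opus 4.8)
The plan is to use the canonical mesh of $Q$ over a transversal $E$ and to exploit the commuting-square conditions (M3) together with the fact, already noted, that an orbit $Qe$ is latin precisely when $\varphi_{e,e}$ is a permutation. Since all orbits are latin, every $\varphi_{e,e}$ is an automorphism of $A_e=\orb Qe$, and I want to leverage (M3) in the form $\varphi_{e,e}^2=\varphi_{f,e}\varphi_{e,f}$ (and the symmetric version) to force the fibre maps $\varphi_{e,f}$ to be isomorphisms between the orbit groups.

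For part (1), I would fix two orbit representatives $e,f\in E$. By (M3) we have $\varphi_{e,e}^2=\varphi_{f,e}\varphi_{e,f}$ and $\varphi_{f,f}^2=\varphi_{e,f}\varphi_{f,e}$. Since $\varphi_{e,e}$ and $\varphi_{f,f}$ are bijections, the left-hand sides are bijections, so $\varphi_{e,f}$ is injective (being a left factor in a bijection read the right way) and surjective (being a right factor in a bijection), hence $\varphi_{e,f}\colon A_e\to A_f$ is a group isomorphism. This gives $\orb Qe\simeq\orb Qf$ for all $e,f\in E$, proving (1). (This is essentially the finite argument of Proposition \ref{p:finite_latin_orbit} stripped of the cardinality step, which is now unnecessary because both factorizations are available.)

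For part (2), I would produce an explicit quandle isomorphism $Qe\to Qf$. By Proposition \ref{prop:affine_orbit}, $Qe=\aff{\orb Qe,L_e}$ and $Qf=\aff{\orb Qf,L_f}$, so by Corollary \ref{c:iso_conn} it suffices to exhibit a group isomorphism $\psi\colon\orb Qe\to\orb Qf$ conjugating $L_e$ to $L_f$, i.e.\ $\psi L_e=L_f\psi$, equivalently $\psi(1-\varphi_{e,e})=(1-\varphi_{f,f})\psi$. The natural candidate is $\psi=\varphi_{e,f}$, already shown to be an isomorphism in part (1); the task reduces to checking $\varphi_{e,f}\varphi_{e,e}=\varphi_{f,f}\varphi_{e,f}$, which is exactly an instance of (M3) (take the commuting square with $i=e$, intermediate indices $e$ and $f$, and terminal index $f$). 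Thus $\varphi_{e,f}$ intertwines the two translations and is the desired quandle isomorphism.

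The main obstacle I anticipate is purely bookkeeping: making sure the correct instances of (M3) are invoked with the right index assignments, since (M3) is a four-index condition and both the bijectivity argument and the intertwining identity are special cases that must be read off carefully. Once the right squares are identified, each step is a short formal verification, and no cardinality or finiteness hypothesis is needed beyond what is built into latinity via (M1). I would present part (1) first, as part (2) reuses its isomorphism $\varphi_{e,f}$, and I would phrase the final isomorphism claim through Corollary \ref{c:iso_conn} to avoid recomputing the affine quandle operations by hand.
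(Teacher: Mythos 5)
Your proposal is correct and coincides with the paper's own proof: the paper likewise works in a canonical mesh, uses (M3) in the form $\varphi_{e,e}^2=\varphi_{f,e}\varphi_{e,f}$ (for all pairs, so both factorizations are available) to conclude every $\varphi_{e,f}$ is a group isomorphism, and then invokes (M3) again as $\varphi_{f,f}\varphi_{e,f}=\varphi_{e,f}\varphi_{e,e}$ to get $\varphi_{f,f}=\varphi_{e,e}^{\varphi_{e,f}}$ and applies Corollary \ref{c:iso_conn}. The only difference is expository: you spell out the injectivity/surjectivity bookkeeping that the paper compresses into one sentence.
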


\begin{proof}
Consider a canonical mesh of such a quandle $Q$. All mappings $\varphi_{e,e}$ are permutations. By (M3), we have $\varphi_{e,e}^2=\varphi_{f,e}\varphi_{e,f}$ for every $e,f\in E$, hence all mappings $\varphi_{e,f}$ are permutations, and thus isomorphisms $\orb Qe\simeq\orb Qf$. By (M3) again, we have $\varphi_{f,f}\varphi_{e,f}=\varphi_{e,f}\varphi_{e,e}$, hence $\varphi_{f,f}=\varphi_{e,e}^{\varphi_{e,f}}$, and according to Corollary \ref{c:iso_conn}, the orbits $Qe$ and $Qf$ are isomorphic (as affine quandles).
\end{proof}

An interesting consequence is that in any medial quandle, all latin orbits are isomorphic: consider the subquandle of all elements that belong to a latin orbit.

Now we show two technical lemmas on affine meshes that result in quandles with latin orbits. First, we show that, up to isomorphism, we can always take the constant matrix zero. Next, we show that, up to isomorphism, there is only one choice of the homomorphism matrix.
Without loss of generality, we shall consider all orbit groups equal.

\begin{lemma}\label{l:latin_orbits1}
Let $\mathcal A=((A,A,\dots);\varphi_{i,j};c_{i,j})$ be an indecomposable affine mesh over a set $I$ such that $\varphi_{i,i}$ is a permutation for every $i\in I$. Then the sum of $\A$ is isomorphic to the sum of the affine mesh $\mathcal A'=((A,A,\dots);\varphi_{i,j};0)$.
\end{lemma}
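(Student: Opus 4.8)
The goal is to show that the constants $c_{i,j}$ can be killed by a homology. By Theorem \ref{thm:isomorphism}, it suffices to exhibit a homology between $\A=((A,A,\dots);\varphi_{i,j};c_{i,j})$ and $\A'=((A,A,\dots);\varphi_{i,j};0)$. I would take $\pi=\mathrm{id}$ and keep the same groups and homomorphisms, so I set all $\psi_i=\mathrm{id}$; then condition (H1) holds trivially since $\varphi_{i,j}'=\varphi_{i,j}$. Everything thus reduces to finding constants $d_i\in A$ satisfying condition (H2), which in this setting becomes
\begin{equation*}
c_{i,j}=\varphi_{i,j}(d_i)-\varphi_{j,j}(d_j)\qquad\text{for all }i,j\in I.
\end{equation*}
So the whole lemma is the solvability of this system of equations for the unknowns $d_i$.

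The key idea is to use the latin hypothesis: since $\varphi_{i,i}$ is a permutation for every $i$, condition (M3) gives $\varphi_{i,i}^2=\varphi_{j,i}\varphi_{i,j}$, and hence (as in Proposition \ref{p:latin_orbits}) every $\varphi_{i,j}$ is a bijection. This is what makes the system solvable: I expect to define the $d_i$ one coordinate at a time, anchored at a fixed index. Concretely, fix $k\in I$ and set $d_k=0$. For each $i$, condition (M4) of the mesh $\A$, namely $\varphi_{j,j}(c_{i,j})=\varphi_{k,j}$-type relations together with (M2) $c_{i,i}=0$, should force a consistent definition. I would try $d_i=\varphi_{i,k}^{-1}(-c_{i,k})$, or more symmetrically exploit that the off-diagonal equation $c_{i,j}=\varphi_{i,j}(d_i)-\varphi_{j,j}(d_j)$ must be derivable from the single-index data by (M4).

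The main obstacle, and the crux of the argument, is \emph{consistency}: I must verify that the $d_i$ defined from one anchoring equation actually satisfy \emph{all} the equations $c_{i,j}=\varphi_{i,j}(d_i)-\varphi_{j,j}(d_j)$ simultaneously. This is exactly where axiom (M4), $\varphi_{j,k}(c_{i,j})=\varphi_{k,k}(c_{i,k}-c_{j,k})$, must be invoked: it is the compatibility condition that ties together the constants across different indices, and it is precisely strong enough to guarantee that a choice of $d_i$ made to satisfy the equations with the anchor $k$ automatically satisfies the equations for an arbitrary pair $(i,j)$. The plan is therefore: first reduce (via Theorem \ref{thm:isomorphism}) to solving the displayed system; second, define $d_i$ using the bijectivity of the $\varphi_{i,j}$ and a fixed anchor; third, verify consistency by applying a suitable $\varphi$ to the candidate identity and rewriting with (M3) and (M4) until it collapses to a true equation. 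I anticipate that the bijectivity of every $\varphi_{i,j}$ (which holds under the latin hypothesis but not in general) is exactly what lets me invert and isolate the $d_i$, so the argument does genuinely use the assumption rather than holding for arbitrary meshes.
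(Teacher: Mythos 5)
Your plan is essentially the paper's proof: the paper likewise applies Theorem \ref{thm:isomorphism} with $\pi=\mathrm{id}$ and $\psi_i=\mathrm{id}$, reduces the lemma to solving the system $c_{i,j}=\varphi_{i,j}(d_i)-\varphi_{j,j}(d_j)$, anchors the solution at a fixed index (it uses $1\in I$, setting $d_i=-\varphi_{i,i}^{-1}(c_{1,i})$, so that $d_1=0$ by (M2)), and verifies consistency by exactly the (M3)/(M4) collapse you anticipate. Two points would need repair in execution. First, your candidate $d_i=\varphi_{i,k}^{-1}(-c_{i,k})$ has the wrong sign: your own anchoring equation for the pair $(i,k)$ with $d_k=0$ reads $c_{i,k}=\varphi_{i,k}(d_i)$, forcing $d_i=\varphi_{i,k}^{-1}(c_{i,k})$; as literally written, your formula would give $c_{i,k}=-c_{i,k}$. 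The corrected formula agrees with the paper's choice for $k=1$, since $\varphi_{i,i}^2=\varphi_{1,i}\varphi_{i,1}$ by (M3) and $\varphi_{1,i}(c_{i,1})=\varphi_{i,i}(c_{i,i}-c_{1,i})=-\varphi_{i,i}(c_{1,i})$ by (M4) and (M2). Second, the consistency check you defer is the entire content of the proof, though it is short: with the paper's $d_i$ one computes $\varphi_{j,j}\varphi_{i,j}(d_i)\stackrel{(M3)}{=}\varphi_{i,j}\varphi_{i,i}(d_i)=-\varphi_{i,j}(c_{1,i})\stackrel{(M4)}{=}\varphi_{j,j}(c_{i,j}-c_{1,j})$, and cancelling the bijection $\varphi_{j,j}$ gives $\varphi_{i,j}(d_i)=c_{i,j}-c_{1,j}=c_{i,j}+\varphi_{j,j}(d_j)$, which is (H2).

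Two smaller remarks. The paper's argument uses only bijectivity of the diagonal maps $\varphi_{i,i}$ (and injectivity of $\varphi_{j,j}$ in the cancellation step), so your preliminary derivation that \emph{all} $\varphi_{i,j}$ are bijections — correct under the hypothesis, by the argument of Proposition \ref{p:latin_orbits} — is not actually needed. And before invoking Theorem \ref{thm:isomorphism} you should record, as the paper does, that $\mathcal A'$ is itself an indecomposable affine mesh: (M1)--(M4) are immediate with zero constants, and indecomposability holds because each $\varphi_{i,i}$ is onto $A$ (the ``if'' direction of the isomorphism theorem does not require indecomposability, but the indecomposability of $\mathcal A'$ is used when the lemma is chained with Lemma \ref{l:latin_orbits2}).
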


\begin{proof}
First observe that $\mathcal A'$ is an indecomposable affine mesh, because all mappings $\varphi_{i,i}$ are onto $A$. So we can use Theorem \ref{thm:isomorphism}. Let for every $i\in I$ $$\pi=id,\quad \psi_i=id, \quad d_i=-\varphi_{i,i}^{-1}(c_{1,i}).$$
Condition (H1) is satisfied trivially, we check (H2). Since $\varphi_{i,j}'=\varphi_{i,j}$ and $c_{i,j}'=0$, we need to check that $$c_{i,j}=\varphi_{i,j}(d_i)-\varphi_{j,j}(d_j)=\varphi_{i,j}(d_i)+c_{1,j}.$$
Using the definition of $d_i$ again, we obtain
$$\varphi_{j,j}\varphi_{i,j}(d_i)\stackrel{(M3)}{=} \varphi_{i,j}\varphi_{i,i}(d_i)=-\varphi_{i,j}(c_{1,i})\stackrel{(M4)}{=} -\varphi_{j,j}(c_{1,j}-c_{i,j}).$$
Since $\varphi_{j,j}$ is bijective, we obtain $\varphi_{i,j}(d_i)=c_{i,j}-c_{1,j}$, as required.
\end{proof}

\begin{lemma}\label{l:latin_orbits2}
Let $\mathcal A=((A,A,\dots);\varphi_{i,j};0)$ be an indecomposable affine mesh over a set $I$ such that $\varphi_{i,i}$ is a permutation for every $i\in I$.
Then the sum of $\A$ is isomorphic to the sum of the affine mesh $\mathcal A'=((A,A,\dots);\varphi_{i,j}';0)$ with $\varphi_{i,j}'=\varphi_{1,1}$ for every $i,j$.
\end{lemma}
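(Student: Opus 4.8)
The plan is to invoke the Isomorphism Theorem (Theorem~\ref{thm:isomorphism}). Both meshes have the same index set $I$ and the zero constant matrix, so I would look for a homology with $\pi=\mathrm{id}$ and all $d_i=0$. With these choices condition (H2) becomes $\psi_j(0)=0+\varphi'_{\pi i,\pi j}(0)-\varphi'_{\pi j,\pi j}(0)$, which holds trivially, so the whole task reduces to producing group automorphisms $\psi_i$ of $A$ satisfying (H1). Before applying the theorem I should record that $\mathcal A'$ is a genuine indecomposable affine mesh: (M2) and (M4) are immediate since all its constants vanish, (M3) holds because both of its sides equal $\varphi_{1,1}^2$, (M1) holds because $1-\varphi'_{i,i}=1-\varphi_{1,1}$ is an automorphism by (M1) for $\mathcal A$, and indecomposability is clear because $\varphi_{1,1}$ is onto, so $\im(\varphi'_{i,j})=A$.

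Under $\pi=\mathrm{id}$ and $\varphi'_{i,j}=\varphi_{1,1}$, condition (H1) reads $\psi_j\varphi_{i,j}=\varphi_{1,1}\psi_i$ for all $i,j$. Reading off the instances with $i=1$ and normalizing $\psi_1=\mathrm{id}$ forces $\psi_j=\varphi_{1,1}\varphi_{1,j}^{-1}$; these are well-defined automorphisms of $A$ because, under the latin hypothesis, all $\varphi_{i,j}$ are bijective (Proposition~\ref{p:latin_orbits}). I would therefore set $\psi_j:=\varphi_{1,1}\varphi_{1,j}^{-1}$ and verify the general case of (H1).

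The crux is the conjugation identity
$$\varphi_{i,j}=\varphi_{1,j}\,\varphi_{1,1}\,\varphi_{1,i}^{-1},$$
which I would extract from (M3): putting $j'=i$ gives $\varphi_{j,k}\varphi_{i,j}=\varphi_{i,k}\varphi_{i,i}$, and specializing $i=1$ yields $\varphi_{j,k}\varphi_{1,j}=\varphi_{1,k}\varphi_{1,1}$; inverting $\varphi_{1,j}$ and relabelling indices produces the identity. Substituting it into $\psi_j\varphi_{i,j}=\varphi_{1,1}\varphi_{1,j}^{-1}\varphi_{i,j}$ telescopes the expression down to $\varphi_{1,1}^2\varphi_{1,i}^{-1}$, which is exactly $\varphi_{1,1}\psi_i$, establishing (H1).

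The only genuine content is guessing the correct form of $\psi_j$ and distilling this conjugation identity for $\varphi_{i,j}$ out of (M3); after that the verification is pure bookkeeping. I expect no difficulty from a possibly infinite $A$, since the bijectivity of every $\varphi_{i,j}$ is a consequence of (M3) together with the permutation hypothesis rather than of any cardinality argument.
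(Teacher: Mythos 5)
Your proof is correct and takes essentially the same route as the paper: both apply Theorem~\ref{thm:isomorphism} with $\pi=\mathrm{id}$ and $d_i=0$, build the fibre isomorphisms out of the mesh's own homomorphisms, and reduce (H1) to condition (M3) (with bijectivity of the cross maps extracted from (M3) and the permutation hypothesis, so no finiteness is needed). The only difference is cosmetic: the paper chooses $\psi_j=\varphi_{j,1}$, which makes (H1) a literal instance of (M3) with no inverses, whereas your $\psi_j=\varphi_{1,1}\varphi_{1,j}^{-1}$ differs from it merely by composition with the automorphism $\varphi_{1,1}$, since (M3) gives $\varphi_{j,1}=\varphi_{1,1}^2\varphi_{1,j}^{-1}$.
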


\begin{proof}
First observe that $\mathcal A'$ is an indecomposable affine mesh, because $\varphi_{1,1}$ is onto $A$. So we can use Theorem \ref{thm:isomorphism}. Let for every $i\in I$ $$\pi=id,\quad \psi_i=\varphi_{i,1}, \quad d_i=0.$$
All mappings $\psi_i$ are bijective, because $\varphi_{i,i}^2=\varphi_{1,i}\varphi_{i,1}$ and $\varphi_{1,1}^2=\varphi_{i,1}\varphi_{1,i}$
according to (M3).
Condition (H1), with $\varphi_{i,j}'=\varphi_{1,1}$, states $$\varphi_{j,1}\varphi_{i,j}=\varphi_{1,1}\varphi_{i,1},$$
which is a special case of condition (M3) on $\mathcal A$.
Condition (H2) is satisfied trivially.
\end{proof}

Notice that the mesh $\mathcal A'$ in the previous lemma describes the direct product $\Aff(A,1-\varphi_{1,1})\times P$ where $P$ is a projection quandle over $I$.
The main result of this section follows easily.

\begin{theorem}\label{thm:latin_orbits}
Consider a medial quandle such that all orbits are latin. Then it is isomorphic to a direct product of a latin quandle and a projection quandle.
\end{theorem}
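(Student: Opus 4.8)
The plan is to realize $Q$ as the sum of an indecomposable affine mesh, normalize that mesh so that all fibres become a single fixed group $A$, and then apply the two technical lemmas just proved to delete the constants and to collapse all the homomorphisms to a single map, at which point the mesh is visibly a direct product. Since the paper has already done the computational work in Lemmas \ref{l:latin_orbits1} and \ref{l:latin_orbits2}, the argument is essentially a chaining of known results.

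First I would apply Theorem \ref{thm:decomposition} to a transversal $E$ of the orbit decomposition, obtaining the canonical mesh $\A_{Q,E}=(\orb Qe;\varphi_{e,f};c_{e,f})$, which is indecomposable and whose sum is $Q$. Since every orbit $Qe=\aff{\orb Qe,L_e}$ is latin and $1-\varphi_{e,e}=L_e$ by Lemma \ref{lem:mesh3}, each diagonal map $\varphi_{e,e}$ is a permutation of $\orb Qe$. By Proposition \ref{p:latin_orbits} all orbit groups $\orb Qe$ are isomorphic to a common abelian group $A$; transporting the mesh data along fixed isomorphisms $\orb Qe\simeq A$ produces a homologous mesh, hence one with isomorphic sum by Theorem \ref{thm:isomorphism}, of the form $\mathcal A=((A,A,\dots);\varphi_{i,j};c_{i,j})$ over $I$, still indecomposable and with every $\varphi_{i,i}$ a permutation.

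Now the two lemmas apply directly. Lemma \ref{l:latin_orbits1} replaces $\mathcal A$ by the mesh $((A,A,\dots);\varphi_{i,j};0)$ with all constants zero and the same sum up to isomorphism, and Lemma \ref{l:latin_orbits2} then replaces the homomorphism family by the constant family $\varphi_{i,j}'=\varphi_{1,1}$, again preserving the sum up to isomorphism. The resulting mesh $((A,A,\dots);\varphi_{1,1};0)$ is, as observed just before the theorem, the direct product $\aff{A,1-\varphi_{1,1}}\times P$, where $P$ is the projection quandle on the index set $I$. Because $\varphi_{1,1}$ is a permutation, $\aff{A,1-\varphi_{1,1}}$ is latin, and the chain of isomorphisms shows $Q$ is isomorphic to this direct product.

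The only point requiring care, and not a genuine obstacle, is the ``without loss of generality'' normalization to equal orbit groups. I would make this precise by choosing group isomorphisms $\psi_e\colon\orb Qe\to A$ and taking $\pi=\mathrm{id}$, $d_e=0$, which exhibits $\A_{Q,E}$ as homologous to the transported mesh; Theorem \ref{thm:isomorphism} then legitimately lets us assume all fibres equal $A$ before invoking the lemmas. Once this bookkeeping is in place, everything else is a routine substitution into results already established.
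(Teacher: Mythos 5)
Your proof is correct and takes essentially the same route as the paper, which likewise passes to the canonical mesh, observes that latin orbits make every $\varphi_{e,e}$ a permutation, and chains Lemmas \ref{l:latin_orbits1} and \ref{l:latin_orbits2} to read off the direct product $\aff{A,1-\varphi_{1,1}}\times P$. Your explicit normalization to equal fibres via a homology with $\pi=\mathrm{id}$, $d_e=0$ and transported isomorphisms $\psi_e$ is precisely what the paper's ``without loss of generality, we shall consider all orbit groups equal'' leaves implicit, so the extra bookkeeping is a faithful, not a divergent, elaboration.
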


\begin{proof}
Denote $Q$ such a quandle and let $Q_0$ be one of its orbits. Its canonical mesh satisfies the assumptions of Lemmas \ref{l:latin_orbits1} and \ref{l:latin_orbits2}, hence $Q$ is isomorphic to $Q_0\times P$, where $P$ is a projection quandle over the set of orbits.
\end{proof}

Using Proposition \ref{p:finite_latin_orbit}, we immediately obtain the following.

\begin{corollary}\label{thm:finite_latin_orbit}
Consider a medial quandle such that all orbits have equal finite size and one of them is latin (as a subquandle). Then it is isomorphic to a direct product of a latin quandle and a projection quandle.
\end{corollary}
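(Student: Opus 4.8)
The plan is to reduce the hypotheses to those of Theorem~\ref{thm:latin_orbits} and then invoke it directly. The key observation is that Proposition~\ref{p:finite_latin_orbit} already carries out the substantive part of the argument: it promotes the assumption ``all orbits have equal finite size and one of them is latin'' to the strictly stronger conclusion ``all orbits are latin''. So the first step I would take is simply to apply Proposition~\ref{p:finite_latin_orbit} to the given quandle $Q$, obtaining that \emph{every} orbit of $Q$ is latin as a subquandle.

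Once every orbit is latin, $Q$ satisfies exactly the hypothesis of Theorem~\ref{thm:latin_orbits}, which asserts that such a quandle is isomorphic to a direct product of a latin quandle and a projection quandle. The second and final step is therefore to quote Theorem~\ref{thm:latin_orbits}, which yields the desired decomposition $Q\simeq Q_0\times P$.

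I expect no genuine obstacle here, precisely because both ingredients are already established. The only point worth verifying is the division of labor regarding finiteness: the finiteness hypothesis is consumed entirely inside Proposition~\ref{p:finite_latin_orbit}, where equal finite orbit size forces the maps $\varphi_{e,f}$ — being simultaneously injective and surjective through the factorizations $\varphi_{e,e}^2=\varphi_{f,e}\varphi_{e,f}$ supplied by (M3) — to be bijections, whereas Theorem~\ref{thm:latin_orbits} itself imposes no finiteness condition at all. Since that is indeed the structure of the two results, the corollary follows immediately by composing them.
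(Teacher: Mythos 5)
Your proposal is correct and matches the paper's own proof exactly: the corollary is stated immediately after Theorem \ref{thm:latin_orbits} as a direct combination of Proposition \ref{p:finite_latin_orbit} (which upgrades the hypothesis to ``all orbits are latin'') with that theorem. Your remark on where the finiteness hypothesis is consumed is also accurate.
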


\begin{example}
Consider a medial quandle $Q$ with $m$ orbits of prime size $p$. According to Proposition \ref{p:finite_latin_orbit}, there are two essentially different types of such quandles.
\begin{enumerate}
    \item All orbits are latin. Then $Q$ is isomorphic to $\Aff(\Z_p,f)\times P$, where $f\in\{2,\dots,p-1\}$ and $P$ is a projection quandle of size $m$. There are $p-2$ such quandles up to isomorphism.
    \item None of the orbits is latin. Then all orbits are isomorphic to $\Aff(\Z_p,1)$, hence are projection quandles. We shall see later in Example \ref{ex:orbits_Zp} that there are at least $$p^{m(m-p(1+\log_pm)-2)}$$ such quandles up to isomorphism. For $p$ fixed, the growth rate is at least $p^{m^2-O(m\log m)}$.
\end{enumerate}
\end{example}

Quandles where all orbits are projection quandles will be called \emph{2-reductive} and studied in the next section.

\section{Reductivity}\label{sec:reductivity}

A binary algebra $Q$ is called (left) $m$-\emph{reductive}, if $(R_y)^m$ is a constant mapping onto $y$, i.e., if it satisfies the identity
\begin{equation*}
(((x\underbrace{y)y)\ldots )y}_{m-\text{times}}=y
\end{equation*}
for every $x,y\in Q$. If $Q$ is medial and idempotent, this identity is equivalent to a more general condition that any composition $R_{z_1}R_{z_2}\cdots R_{z_m}$ is a constant mapping, i.e.,
\begin{equation*}
(((xz_1)z_2)\ldots )z_m=(((yz_1)z_2)\ldots )z_m
\end{equation*}
for every $x,y,z_1,\dots,z_m\in Q$, see \cite[Lemma 1.2]{PR}. An binary algebra will be called \emph{reductive}, if it is $m$-reductive for some $m$.
The phenomenon of $m$-reductivity in the general context of medial idempotent binary algebras was studied in \cite{PR}, the special but very important case $m=2$ in greater detail in \cite{RS91} (under the name \emph{differential groupoids}), and a generalization to higher arities in \cite{KPRS}.

Let $Q=\aff{A,f}$ be an affine quandle. It is easy to calculate
$$(((x\underbrace{y)y)\ldots )y}_{m-\text{times}}=(1-f)^m(x)+(1-(1-f)^m)(y),$$
hence $Q$ is $m$-reductive if and only if $(1-f)^m=0$.

\begin{exm}
Let $p^m$ be a prime power. Then $\aff{\Z_{p^m},1-p}$ is an $m$-reductive medial quandle which is not $n$-reductive for any $n<m$.
\end{exm}

We show that the orbits of an $m$-reductive medial quandle satisfy the more restrictive condition $(1-f)^{m-1}=0$.
The same property actually characterizes the affine meshes that result in $m$-reductive quandles.

\begin{prop}\label{prop:reductive}
Let $\mathcal A=(A_i;\,\varphi_{i,j};\,c_{i,j})$ be an indecomposable affine mesh over a set $I$. Then the sum of $\mathcal A$ is $m$-reductive if and only if, for every $i\in I$, $$\varphi_{i,i}^{m-1}=0.$$
\end{prop}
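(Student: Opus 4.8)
The plan is to reduce the $m$-reductivity of the sum to a statement about individual translations, and then translate that statement into a condition on the homomorphisms $\varphi_{i,j}$ using the formula for the sum operation. The key computational tool is an explicit formula for an iterated right translation $R_{z_1}R_{z_2}\cdots R_{z_{m-1}}$ applied to an element $a\in A_i$, where $z_1\in A_{j_1},\dots,z_{m-1}\in A_{j_{m-1}}$. By the remark following the definition of $m$-reductivity (and \cite[Lemma 1.2]{PR}), the sum is $m$-reductive if and only if every such composition of $m$ right translations is constant, equivalently, if and only if the ``coefficient of $a$'' in $R_{z_1}\cdots R_{z_{m-1}}(a)$ vanishes for every choice of fibres.

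First I would compute a single right translation: for $a\in A_i$ and $z\in A_j$, the definition of the sum gives $a*z = c_{i,j}+\varphi_{i,j}(a)+(1-\varphi_{j,j})(z)$, so $R_z$ maps $A_i$ into $A_j$ and the part depending on $a$ is precisely $\varphi_{i,j}(a)$. Iterating, I expect the dependence of $R_{z_1}\cdots R_{z_{m-1}}(a)$ on $a$ to be
$$\varphi_{j_{m-2},j_{m-1}}\cdots\varphi_{j_1,j_2}\varphi_{i,j_1}(a),$$
a composite of $m-1$ off-diagonal homomorphisms, with all the constant terms and the $z$-dependent terms collected separately. Thus the sum is $m$-reductive exactly when every such $(m-1)$-fold composite $\varphi_{j_{m-2},j_{m-1}}\cdots\varphi_{i,j_1}$ is the zero map, for all index sequences. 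The crux is then to show that this family of conditions is equivalent to the single diagonal condition $\varphi_{i,i}^{m-1}=0$ for every $i$.

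The main work is the equivalence of ``all $(m-1)$-fold composites vanish'' with ``all $(m-1)$-st diagonal powers vanish'', and here condition (M3) does the heavy lifting. The commuting-square relation (M3) lets me rewrite any composite $\varphi_{j,k}\varphi_{i,j}$ by changing the intermediate index freely; iterating this, any composite along a path $i\to j_1\to\cdots\to j_{m-1}$ can be rerouted. Concretely, (M3) gives $\varphi_{j,k}\varphi_{i,j}=\varphi_{i,k}\varphi_{i,i}$ (taking $j'=i$), and more generally it should let me collapse an $(m-1)$-fold composite into a form involving $\varphi_{i,i}^{m-2}$ followed by one final map, or conversely relate the diagonal power $\varphi_{i,i}^{m-1}=\varphi_{j,i}\varphi_{i,i}^{m-2}\varphi_{i,j}$-type factorizations to off-diagonal composites. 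For the forward direction, $\varphi_{i,i}^{m-1}$ is itself one of the composites (the path $i\to i\to\cdots\to i$), so it vanishes immediately. For the reverse direction, I would use (M3) repeatedly to express an arbitrary composite $\varphi_{j_{m-2},j_{m-1}}\cdots\varphi_{i,j_1}$ in terms of a diagonal power $\varphi_{i,i}^{m-1}$ precomposed or postcomposed with a single homomorphism, so that $\varphi_{i,i}^{m-1}=0$ forces the composite to vanish as well.

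The step I expect to be the main obstacle is the bookkeeping in this rerouting argument: showing cleanly that \emph{every} length-$(m-1)$ composite reduces, via (M3), to something annihilated by the diagonal powers. The danger is an off-by-one error in the exponent, or failing to handle composites whose index path never returns to $i$. The cleanest way to sidestep path-by-path analysis is likely an inductive claim: using (M3) in the form $\varphi_{j,k}\varphi_{i,j}=\varphi_{i,k}\varphi_{i,i}$, one shows by induction on the length of the path that $\varphi_{j_{m-2},j_{m-1}}\cdots\varphi_{i,j_1}=\varphi_{i,j_{m-1}}\varphi_{i,i}^{\,m-2}$, pushing all the intermediate maps onto a single diagonal power at the source index $i$. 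Then the off-diagonal composite vanishes whenever $\varphi_{i,i}^{m-2}$ is killed after one more application, i.e.\ whenever $\varphi_{i,i}^{m-1}=0$ (using $\varphi_{i,i}^{m-1}=\varphi_{i,i}\varphi_{i,i}^{m-2}$ and a further (M3) step to relate $\varphi_{i,j}\varphi_{i,i}^{m-2}$ to $\varphi_{i,i}^{m-1}$-divisible data). Once this normal form is established, both implications follow without case analysis.
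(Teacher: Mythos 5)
There is a genuine gap at the very first reduction. You claim that $m$-reductivity of the sum is equivalent to the vanishing of the ``coefficient of $a$'', i.e.\ of the composite homomorphisms $\varphi_{j_{m-1},j_m}\cdots\varphi_{i,j_1}$, with ``all the constant terms \dots collected separately'' and discarded. This is false: a composition $R_{z_m}\cdots R_{z_1}$ whose homomorphism part vanishes is only constant \emph{on each fibre}, and the constant value depends on the source fibre $i$ through terms such as $\varphi_{j_{m-1},j_m}\cdots\varphi_{j_1,j_2}(c_{i,j_1})$. Concretely, the paper's own mesh $((\Z_4,\Z_2);\,\left(\begin{smallmatrix}2&0\\2&0\end{smallmatrix}\right);\,\left(\begin{smallmatrix}0&1\\1&0\end{smallmatrix}\right))$ is indecomposable and every composite $\varphi_{j,k}\varphi_{i,j}$ is zero, yet the sum is not $2$-reductive (indeed $\varphi_{1,1}=2\neq0$): with $z_1=z_2=0\in\Z_4$, the map $R_0R_0$ sends all of $\Z_4$ to $0$ but all of the fibre $\Z_2$ to $2$. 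So your criterion, read with $m$-fold composites, is not equivalent to $m$-reductivity; and read literally with $(m-1)$-fold composites (as you wrote $R_{z_1}\cdots R_{z_{m-1}}$, which is moreover off by one against ``composition of $m$ right translations''), its equivalence with $m$-reductivity is precisely the nontrivial content of the proposition and cannot be assumed. Your forward direction then collapses: from constancy of $m$-fold compositions the diagonal path $i\to i\to\cdots\to i$ yields only $\varphi_{i,i}^{m}=0$, not $\varphi_{i,i}^{m-1}=0$. The missing exponent can only be recovered from the constants $c_{i,j}$ together with indecomposability, which your argument never uses --- and it must be used, since the statement fails for decomposable meshes: $((\Z_{p^m});\,(p);\,(0))$ sums to $\Aff(\Z_{p^m},1-p)$, which is $m$-reductive although $\varphi_{1,1}^{m-1}=p^{m-1}\neq0$. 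Likewise, your (M3)-rerouting normal form $\varphi_{i,j_{m-1}}\varphi_{i,i}^{m-2}$ is fine, but the step you flagged as uncertain is genuinely not closable by (M3) alone: $\varphi_{i,i}^{m-1}=0$ does not formally imply $\varphi_{i,j}\varphi_{i,i}^{m-2}=0$; one needs (M4) and the generation of each $A_i$ by the sets $c_{l,i}+\im(\varphi_{l,i})$.

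The paper's proof sidesteps all of this by never separating the constants from the homomorphism part. It iterates $R_b$ for a single fixed $b\in A_j$, obtaining
$$(((a\underbrace{b)b)\ldots )b}_{m\text{-times}}=\varphi_{j,j}^{m-1}\bigl(c_{i,j}+\varphi_{i,j}(a)\bigr)+\sum_{r=0}^{m-1}\varphi_{j,j}^{r}(1-\varphi_{j,j})(b),$$
where the $a$-dependent summand deliberately keeps $c_{i,j}$ attached. Setting $b=0$ gives $\varphi_{j,j}^{m-1}(c_{i,j}+\varphi_{i,j}(a))=0$, and since indecomposability says exactly that the elements $c_{i,j}+\varphi_{i,j}(a)$ generate $A_j$, this forces $\varphi_{j,j}^{m-1}=0$; the converse is the telescoping identity $\sum_{r=0}^{m-1}\varphi_{j,j}^{r}(1-\varphi_{j,j})=1-\varphi_{j,j}^m=1$. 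If you want to pursue your route, you would have to supplement the composite conditions with the fibre-compatibility conditions on the constants and then run the indecomposability argument through (M4) --- at which point you have essentially reconstructed the paper's computation in a more laborious form.
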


\begin{proof}
Let $Q$ be the sum of the mesh $\mathcal A$. Then, for every $a\in A_i$ and $b\in A_j$,
\begin{equation}\label{eq:mred}
(((a\underbrace{b)b)\ldots )b}_{m-\text{times}}=\varphi_{j,j}^{m-1}(c_{i,j}+\varphi_{i,j}(a))\,+\,\sum_{r=0}^{m-1}\varphi_{j,j}^{r}(1-\varphi_{j,j})(b).
\end{equation}

($\Rightarrow$)
Assuming $m$-reductivity, expression \eqref{eq:mred} equals $b$, and taking $b=0$ in the group $A_j$, we obtain
$$\varphi_{j,j}^{m-1}(c_{i,j}+\varphi_{i,j}(a))=0.$$
Indecomposability of the mesh means that
$$A_j=\langle c_{i,j}+\varphi_{i,j}(a):\ i\in I,\ a\in A_i\rangle,$$
hence
$$\varphi_{j,j}^{m-1}(x)=0$$ for every $x\in A_j$.

($\Leftarrow$)
In view of \eqref{eq:mred}, we need to show that $$\sum_{r=0}^{m-1}\varphi_{j,j}^{r}(1-\varphi_{j,j})(b)=b.$$
The sum telescopes, we obtain $\sum_{r=0}^{m-1}\varphi_{j,j}^{r}(1-\varphi_{j,j})=\sum_{r=0}^{m-1}(\varphi_{j,j}^{r}-\varphi_{j,j}^{r+1})=1-\varphi_{j,j}^m=1$.
\end{proof}

\begin{corollary}\label{cor:copr3}
Let $Q$ be a medial quandle. If the orbit sizes are coprime, then $Q$ is 3-reductive.
\end{corollary}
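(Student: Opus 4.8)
The plan is to combine the two structural facts already established: the divisibility constraint of Proposition~\ref{prop:gcd} and the reductivity criterion of Proposition~\ref{prop:reductive}. First I would invoke Theorem~\ref{thm:decomposition} to write $Q$ as the sum of an indecomposable affine mesh $\mathcal A=(A_i;\varphi_{i,j};c_{i,j})$ over some index set $I$, so that the orbits of $Q$ are precisely the groups $A_i$ and the orbit sizes are the cardinalities $|A_i|$. The hypothesis that the orbit sizes are coprime I would read as $\gcd(|A_i|:i\in I)=1$; note that this is implied also by the stronger ``pairwise coprime'' reading whenever more than one orbit is present.

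The key step is then immediate. Proposition~\ref{prop:gcd} asserts that $|\im(\varphi_{i,i}^2)|$ divides $\gcd(|A_j|:j\in I)$ for every $i\in I$. Under our hypothesis this gcd equals $1$, so $|\im(\varphi_{i,i}^2)|=1$, which forces $\varphi_{i,i}^2=0$ for every $i\in I$.

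Finally I would apply Proposition~\ref{prop:reductive} with $m=3$: since $\varphi_{i,i}^{m-1}=\varphi_{i,i}^2=0$ for every $i\in I$, the sum of the mesh---that is, $Q$---is $3$-reductive, as claimed.

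There is no genuine obstacle here; the corollary is really just the conjunction of the two preceding propositions, and the only point requiring a moment's care is the degenerate single-orbit case. If $|I|=1$ and the unique orbit has size greater than $1$, then its size equals the gcd, which exceeds $1$, so the coprimality hypothesis simply fails and there is nothing to prove; a single orbit of size $1$ is the trivial quandle, which is vacuously $3$-reductive. Thus the assumption already guarantees $\gcd(|A_i|:i\in I)=1$ in every case that actually arises, and the argument above applies verbatim.
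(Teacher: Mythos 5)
Your proof is correct and is essentially the paper's own argument: decompose $Q$ as the sum of an indecomposable affine mesh, use Proposition~\ref{prop:gcd} to get $|\im(\varphi_{i,i}^2)|=1$, hence $\varphi_{i,i}^2=0$, and conclude $3$-reductivity via Proposition~\ref{prop:reductive} with $m=3$. Your extra remark pinning down the single-orbit case and the $\gcd$ reading of ``coprime'' is a sensible clarification the paper leaves implicit, but it does not change the argument.
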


\begin{proof}
Assume $Q$ is the sum of an indecomposable affine mesh $(A_i;\,\varphi_{i,j};\,c_{i,j})$ over a set $I$.
Proposition \ref{prop:gcd} implies that, for every $i\in I$, $|\im{\varphi_{i,i}^2}|=1$, hence $\varphi_{i,i}^2=0$, and $Q$ is 3-reductive by Proposition \ref{prop:reductive}.
\end{proof}

We proceed with an interesting observation: if one of the diagonal homomorphisms is nilpotent, then all diagonal homomorphisms are nilpotent.

\begin{lemma}\label{lem:reductive}
Let $\mathcal A=(A_i;\,\varphi_{i,j};\,c_{i,j})$ be an affine mesh over a set $I$ such that $\varphi_{i,i}^m=0$ for some $i\in I$. Then $\varphi_{j,j}^{m+2}=0$ for every $j\in I$.
\end{lemma}

\begin{proof}
Applying (M3) $(m+1)$-times, we see that $\varphi_{j,j}^{m+2}=\varphi_{i,j}\varphi_{i,i}^m\varphi_{j,i}=0$ for every $j\in I$.
\end{proof}

\begin{example}
Orbits (considered as subquandles) may have different degrees of reductivity. For (the smallest) example, consider the mesh
$$((\Z_4,\Z_2),\,\left(\begin{smallmatrix}2&0\\2&0\end{smallmatrix}\right),\,\left(\begin{smallmatrix}0&1\\1&0\end{smallmatrix}\right)).$$
The first orbit is 2-reductive, but not 1-reductive. The second orbit is 1-reductive.
\end{example}

As a consequence of the observation, we obtain the following characterization of reductive medial quandles.

\begin{theorem}\label{thm:reductive}
Let $Q$ be a medial quandle. Then the following statements are equivalent.
\begin{enumerate}
    \item $Q$ is reductive.
    \item At least one orbit of $Q$ is reductive.
    \item All orbits of $Q$ are reductive.
\end{enumerate}
Moreover,
\begin{enumerate}
    \item[(a)] $Q$ is $m$-reductive if and only if all orbits of $Q$ are $(m-1)$-reductive;
    \item[(b)] if one orbit of $Q$ is $m$-reductive, then $Q$ is $(m+3)$-reductive.
\end{enumerate}
\end{theorem}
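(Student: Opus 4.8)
The plan is to reduce Theorem \ref{thm:reductive} entirely to the earlier results about the diagonal homomorphisms $\varphi_{i,i}$ of a canonical mesh, since Proposition \ref{prop:reductive} already translates ``$m$-reductive'' into the condition $\varphi_{i,i}^{m-1}=0$ for all $i$, and Proposition \ref{affine} identifies each orbit $Qe$ with $\aff{\orb Qe, L_e}=\aff{\orb Qe, 1-\varphi_{e,e}}$, so an orbit is $k$-reductive precisely when $(1-(1-\varphi_{e,e}))^k=\varphi_{e,e}^k=0$. Thus every clause becomes a statement purely about nilpotency of the maps $\varphi_{i,i}$, and the engine driving all the implications is Lemma \ref{lem:reductive}, which says that if one $\varphi_{i,i}$ is nilpotent then so is every other (with a controlled bump in degree).

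First I would fix an indecomposable affine mesh $(A_i;\varphi_{i,j};c_{i,j})$ representing $Q$, as permitted by Theorem \ref{thm:decomposition}, so that the orbits are exactly the fibres $A_i$. I would prove part (a) directly from Proposition \ref{prop:reductive}: the sum is $m$-reductive iff $\varphi_{i,i}^{m-1}=0$ for every $i$, and by the orbit identification above, the orbit $A_i$ is $(m-1)$-reductive iff $\varphi_{i,i}^{m-1}=0$; these are literally the same family of conditions, giving (a) at once. Part (b) is then a direct application of Lemma \ref{lem:reductive}: if one orbit is $m$-reductive, then $\varphi_{i,i}^{m}=0$ for that particular $i$, whence $\varphi_{j,j}^{m+2}=0$ for all $j$ by the lemma; so every orbit is $(m+2)$-reductive, and by part (a) the quandle $Q$ is $(m+3)$-reductive.

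With (a) and (b) in hand, the equivalence of (1)--(3) is almost immediate. The implication (3)$\Rightarrow$(1) follows from (a): if all orbits are reductive, say all are $(m-1)$-reductive for a common $m$ (a common bound exists in the finite case, and can be arranged by taking the larger degree for two orbits in general), then $Q$ is $m$-reductive. The implication (1)$\Rightarrow$(2) is trivial, and even sharper via (a): if $Q$ is $m$-reductive then \emph{every} orbit is $(m-1)$-reductive, in particular at least one is. Finally (2)$\Rightarrow$(3) is exactly the content of Lemma \ref{lem:reductive}, upgraded: one reductive orbit means some $\varphi_{i,i}^{m}=0$, and the lemma forces $\varphi_{j,j}^{m+2}=0$ for all $j$, so every orbit is reductive.

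The one genuine subtlety — the only place requiring care rather than bookkeeping — is the passage from ``each orbit reductive'' to ``$Q$ reductive'' when the index set $I$ is infinite, since a priori the degrees of nilpotency of the $\varphi_{i,i}$ could be unbounded even though each is finite. The clean way around this is to avoid claiming a uniform bound from (3) alone and instead route (3)$\Rightarrow$(1) through (2): pick any single reductive orbit, apply Lemma \ref{lem:reductive} to obtain a \emph{uniform} bound $m+2$ on the nilpotency degree of every $\varphi_{j,j}$, and then invoke part (a). This is where I expect the main obstacle to lie, and it is resolved precisely by the fact that Lemma \ref{lem:reductive} produces a bound depending only on the single starting orbit, not on $j$; so the infinite case causes no real difficulty once the implications are arranged in the order (2)$\Rightarrow$(3)$\Rightarrow$(1) rather than trying to establish (3)$\Rightarrow$(1) directly.
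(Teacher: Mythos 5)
Your proof is correct and follows essentially the same route as the paper: represent $Q$ as the sum of an indecomposable affine mesh (Theorem \ref{thm:decomposition}), identify orbit $k$-reductivity with $\varphi_{i,i}^{k}=0$, obtain (a) from Proposition \ref{prop:reductive} and (b) from Lemma \ref{lem:reductive}, with the equivalence of (1)--(3) falling out immediately. Your only addition is to make explicit the uniform-bound issue for infinitely many orbits, which the paper leaves implicit but which, as you correctly observe, Lemma \ref{lem:reductive} resolves by producing a bound depending only on one orbit.
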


\begin{proof}
Assume $Q$ is the sum of an indecomposable affine mesh $(A_i;\,\varphi_{i,j};\,c_{i,j})$ over a set $I$. An orbit $A_i$, as an affine quandle, is $m$-reductive if and only if $\varphi_{i,i}^{m}=0$. Hence, statement (a) is essentially Proposition \ref{prop:reductive}, and statement (b) follows from (a) using Lemma \ref{lem:reductive}. The equivalence of conditions (1), (2), (3) follows immediately.
\end{proof}

\begin{example}
Let $Q$ be a medial quandle such that one of its orbit groups is isomorphic to $\Z_{2^m}$. Then $Q$ is $(m+3)$-reductive, because for every $f\in\aut{\Z_{2^m}}$, we have $(1-f)^m=0$, hence one orbit of $Q$ is $m$-reductive and Theorem \ref{thm:reductive} applies.
\end{example}

The 2-reductive case is of particular interest (see Section
\ref{sec:enumeration}). Proposition \ref{prop:reductive} says that a
medial quandle is 2-reductive if and only if every orbit is a
projection quandle (the condition $\varphi_{i,i}=0$ means that
the orbit is $\aff{A,1}$). With a little extra work, we obtain a
stronger representation theorem. We start with a lemma stating that,
in the homomorphism matrix, zeros propagate vertically, i.e., if a
column contains zero, the whole column is zero.

\begin{lm}\label{lem:2-reductive}
Let $\mathcal A=(A_i;\,\varphi_{i,j};\,c_{i,j})$ be an indecomposable affine mesh over a set $I$. Assume there are $j,k\in I$ such that $\varphi_{j,k}=0$. Then $\varphi_{i,k}=0$ for every $i\in I$.
\end{lm}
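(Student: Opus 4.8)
The plan is to exploit condition (M3) to transport the vanishing of $\varphi_{j,k}$ across the index set, and then to combine this with indecomposability to force the whole column $(\varphi_{i,k})_{i\in I}$ to vanish. The key algebraic leverage is the factorization $\varphi_{i,k}\varphi_{j,i}=\varphi_{k,k}\varphi_{j,k}$, a direct instance of (M3) with the middle index varying; since $\varphi_{j,k}=0$ by hypothesis, this already gives $\varphi_{i,k}\varphi_{j,i}=0$ for every $i\in I$. So the image of $\varphi_{j,i}$ lands in $\ker(\varphi_{i,k})$, for each $i$.

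First I would record the relevant (M3) identities explicitly. Applying (M3) in the form $\varphi_{i,k}\varphi_{j,i}=\varphi_{k,k}\varphi_{j,k}=0$ shows $\im(\varphi_{j,i})\subseteq\ker(\varphi_{i,k})$. Next I would bring in the constants: by (M4) we have $\varphi_{j,k}(c_{i,j})=\varphi_{k,k}(c_{i,k}-c_{j,k})$, but this mixes the wrong indices, so instead I would look at how $c_{i,k}$ is controlled. The cleaner route is to observe that indecomposability says $A_k=\langle\,\bigcup_{i\in I}(c_{i,k}+\im(\varphi_{i,k}))\,\rangle$, and the goal $\varphi_{i,k}=0$ for all $i$ is about a \emph{different} column index than the generation statement for $A_k$ — so the real content is showing that every generator of each $A_i$ is killed by $\varphi_{i,k}$, which then forces $\varphi_{i,k}=0$ on all of $A_i$ by linearity.

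The central step, therefore, is this: fix $i\in I$ and show $\varphi_{i,k}$ vanishes on the generating set of $A_i$. By indecomposability applied to the group $A_i$, we have $A_i=\langle\,\bigcup_{\ell\in I}(c_{\ell,i}+\im(\varphi_{\ell,i}))\,\rangle$. From the displayed consequence of (M3), $\varphi_{i,k}$ already annihilates each $\im(\varphi_{\ell,i})$ — taking $j$ as the distinguished index where the column is known to vanish, we get $\varphi_{i,k}\varphi_{\ell,i}=\varphi_{k,k}\varphi_{\ell,k}$, which need not be zero unless $\ell=j$. So the straightforward factorization only handles the $\ell=j$ summand, and the main obstacle is controlling the remaining $\im(\varphi_{\ell,i})$ and the constants $c_{\ell,i}$. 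I expect the resolution is to apply $\varphi_{i,k}$ to a generic element of $A_i$ written via the sum structure and push everything through (M3) and (M4) so that each piece factors through $\varphi_{j,\cdot}$ or through $\varphi_{k,k}\varphi_{j,k}=0$; concretely, I would compute $\varphi_{i,k}(c_{\ell,i})$ using (M4) with $(i,j)\mapsto(\ell,i)$ and $(j,k)\mapsto(i,k)$, obtaining $\varphi_{i,k}(c_{\ell,i})=\varphi_{k,k}(c_{\ell,k}-c_{i,k})$, and then relate these $c_{\cdot,k}$ back to the vanishing column through the chain of (M4) relations anchored at $j$.

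The hard part will be bookkeeping the interplay between (M3) and (M4) so that both the homomorphism images and the constants collapse simultaneously; I anticipate that the efficient argument avoids treating constants separately and instead works directly with $\varphi_{i,k}$ applied to an arbitrary element $c_{\ell,i}+\varphi_{\ell,i}(a)$ of a generator of $A_i$, showing the result lies in $\varphi_{k,k}\cdot(\text{something involving }\varphi_{j,k})=0$. Once $\varphi_{i,k}$ kills all generators of $A_i$, linearity of the homomorphism $\varphi_{i,k}$ gives $\varphi_{i,k}=0$ on all of $A_i$, and since $i\in I$ was arbitrary the column vanishes entirely, completing the proof.
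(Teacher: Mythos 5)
Your reduction is on the right track, but there is a genuine gap: the proof is never completed, and the mechanism you anticipate for completing it does not work. You correctly derive $\varphi_{i,k}\varphi_{\ell,i}=\varphi_{k,k}\varphi_{\ell,k}$ from (M3) and $\varphi_{i,k}(c_{\ell,i})=\varphi_{k,k}(c_{\ell,k}-c_{i,k})$ from (M4), so $\varphi_{i,k}$ maps every generator $c_{\ell,i}+\varphi_{\ell,i}(a)$ of $A_i$ into $\im(\varphi_{k,k})$. But the statement that actually closes the argument is $\varphi_{k,k}=0$, and you never prove it --- you only ``expect'' and ``anticipate'' that the remaining terms can be pushed through $\varphi_{k,k}\varphi_{j,k}=0$. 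That anticipated mechanism fails for the constant terms: $\varphi_{k,k}(c_{\ell,k}-c_{i,k})$ does not factor through $\varphi_{j,k}$, so nothing in your sketch makes these terms vanish.

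The missing idea --- and the first step of the paper's proof --- is a \emph{second, separate} application of indecomposability, this time to the group $A_k$ (you invoke it only for $A_i$). On the generators of $A_k$ one has, by (M3), $\varphi_{k,k}\varphi_{\ell,k}=\varphi_{j,k}\varphi_{\ell,j}=0$, and by (M4) anchored at $j$, $0=\varphi_{j,k}(c_{\ell,j})=\varphi_{k,k}(c_{\ell,k}-c_{j,k})$, whence $\varphi_{k,k}(c_{\ell,k})=\varphi_{k,k}(c_{j,k})$ for every $\ell$; specializing $\ell=k$ and using $c_{k,k}=0$ (condition (M2), absent from your proposal) yields $\varphi_{k,k}(c_{j,k})=0$, hence $\varphi_{k,k}(c_{\ell,k})=0$ for all $\ell$. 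Indecomposability of $A_k$ then forces $\varphi_{k,k}=0$, after which your two displayed identities immediately give $\varphi_{i,k}\varphi_{\ell,i}=0$ and $\varphi_{i,k}(c_{\ell,i})=0$, and indecomposability of $A_i$ finishes exactly as you describe. So your identities are precisely the right ones, but without the intermediate lemma $\varphi_{k,k}=0$ --- whose derivation requires (M2) and the substitution $\ell=k$ --- the argument does not close.
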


\begin{proof}
First, we show that $\varphi_{k,k}=0$. The indecomposability condition says that $A_k=\langle c_{i,k}+\im(\varphi_{i,k}):i\in I\rangle$, so it is sufficient to verify that $\varphi_{k,k}\varphi_{i,k}=0$ and $\varphi_{k,k}(c_{i,k})=0$ for every $i\in I$.
By (M3),
\begin{equation*}
\varphi_{k,k}\varphi_{i,k}=\varphi_{j,k}\varphi_{i,j}=0
\end{equation*}
for every $i\in I$, because $\varphi_{j,k}=0$ by the assumptions.
Similarly, by (M4),
\begin{equation*}
0=\varphi_{j,k}(c_{i,j})=\varphi_{k,k}(c_{i,k}-c_{j,k}),
\end{equation*}
and thus
\begin{equation*}
\varphi_{k,k}(c_{i,k})=\varphi_{k,k}(c_{j,k}),
\end{equation*}
for every $i\in I$.
With $i=k$, we see that $\varphi_{k,k}(c_{j,k})=0$, and thus $\varphi_{k,k}(c_{i,k})=0$ for every $i\in I$. Hence $\varphi_{k,k}=0$.

In the second step, fix $i\in I$, and we show that $\varphi_{i,k}=0$. Again, since $A_i=\langle c_{l,i}+\im(\varphi_{l,i}):l\in I\rangle$,
it is sufficient to verify that $\varphi_{i,k}\varphi_{l,i}=0$ and $\varphi_{i,k}(c_{l,i})=0$ for every $l\in I$.
By (M3),
\begin{equation*}
\varphi_{i,k}\varphi_{l,i}=\varphi_{k,k}\varphi_{l,k}=0
\end{equation*}
for every $l\in I$, using $\varphi_{k,k}=0$.
Similarly, by (M4),
\begin{equation*}
\varphi_{i,k}(c_{l,i})=\varphi_{k,k}(c_{l,k}-c_{i,k})=0
\end{equation*}
for every $l\in I$. Hence $\varphi_{i,k}=0$.
\end{proof}

Now we can prove the characterization of affine meshes that result in 2-reductive medial quandles.
The equivalence of (1) and (3) was proved by Roszkowska and Romanowska in \cite[Section 2]{RR89}.

\begin{theorem}\label{thm:2-reductive}
Let $Q$ be a medial quandle and assume it is the sum of an indecomposable affine mesh $(A_i;\,\varphi_{i,j};\,c_{i,j})$ over a set $I$. Then the following statements are equivalent.
\begin{enumerate}
    \item $Q$ is 2-reductive.
    \item For every $j\in I$, there is $i\in I$ such that $\varphi_{i,j}=0$.
    \item $\varphi_{i,j}=0$ for every $i,j\in I$.
\end{enumerate}
\end{theorem}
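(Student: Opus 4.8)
The plan is to prove the chain of implications $(3)\Rightarrow(1)\Rightarrow(2)\Rightarrow(3)$, exploiting Lemma \ref{lem:2-reductive} (the vertical propagation of zeros in the homomorphism matrix) as the main engine for the final step.

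The implication $(3)\Rightarrow(1)$ is immediate from Proposition \ref{prop:reductive}: if $\varphi_{i,j}=0$ for all $i,j$, then in particular $\varphi_{i,i}=0$, so $\varphi_{i,i}^{2-1}=\varphi_{i,i}^1=0$ for every $i$, which is exactly the $m=2$ case of that proposition, hence $Q$ is $2$-reductive. For $(1)\Rightarrow(2)$, again invoke Proposition \ref{prop:reductive} with $m=2$: 2-reductivity gives $\varphi_{j,j}^{1}=\varphi_{j,j}=0$ for every $j\in I$. Thus for each $j$ we may simply take $i=j$ to witness the existence of an index with $\varphi_{i,j}=0$, so $(2)$ holds. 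These two steps are essentially bookkeeping on top of the already-established reductivity characterization.

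The substantive step is $(2)\Rightarrow(3)$, and here I would apply Lemma \ref{lem:2-reductive} directly. Fix an arbitrary target column $j\in I$. By hypothesis $(2)$, there is some index, call it $k$, with $\varphi_{k,j}=0$. Lemma \ref{lem:2-reductive} (reading its roles as: a single zero entry $\varphi_{k,j}=0$ in column $j$ forces the entire column $j$ to vanish) then yields $\varphi_{i,j}=0$ for every $i\in I$. Since $j$ was arbitrary, we conclude $\varphi_{i,j}=0$ for all $i,j\in I$, which is $(3)$.

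The only real obstacle is lining up the index conventions of Lemma \ref{lem:2-reductive} with the statement of condition $(2)$. The lemma is phrased as: if $\varphi_{j,k}=0$ for some pair, then $\varphi_{i,k}=0$ for all $i$; that is, a zero anywhere in column $k$ propagates to the whole of column $k$. Condition $(2)$ supplies, for each fixed target column, exactly one such zero entry, so the matching is clean provided I relabel carefully (the lemma's output column index plays the role of my fixed $j$). No delicate computation is required beyond this relabeling, since all the heavy lifting---the use of indecomposability together with (M3) and (M4) to force whole columns to zero---was already carried out in Lemma \ref{lem:2-reductive}.
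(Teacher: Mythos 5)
Your proof is correct and follows exactly the paper's own argument: the paper also derives $(3)\Rightarrow(1)\Rightarrow(2)$ from Proposition \ref{prop:reductive} (the $m=2$ case giving $\varphi_{j,j}=0$) and $(2)\Rightarrow(3)$ from Lemma \ref{lem:2-reductive}. Your careful relabeling of the lemma's indices is exactly the intended reading, so nothing is missing.
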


\begin{proof}
(3) $\Rightarrow$ (1) $\Rightarrow$ (2) follows from Proposition \ref{prop:reductive}. (2) $\Rightarrow$ (3) follows from Lemma \ref{lem:2-reductive}.
\end{proof}

\begin{corollary}\label{cor:copr}
Let $Q$ be a medial quandle with finite orbits and assume that for every orbit $A$ there is an orbit $B$ such that $|A|$ and $|B|$ are coprime. Then $Q$ is 2-reductive.
\end{corollary}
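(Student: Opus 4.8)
The plan is to deduce Corollary~\ref{cor:copr} from Theorem~\ref{thm:2-reductive} by verifying its condition (2): for every orbit $A_j$, I must produce an orbit $A_i$ with $\varphi_{i,j}=0$. The hypothesis hands me, for each $j$, an orbit $A_i$ whose size is coprime to $|A_j|$, so the natural candidate is exactly this $i$, and the whole task reduces to showing that a homomorphism $\varphi_{i,j}:A_i\to A_j$ between finite abelian groups of coprime order must be the zero map.

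First I would fix $j\in I$ and use the hypothesis to choose $i\in I$ with $\gcd(|A_i|,|A_j|)=1$. Then I would recall the elementary group-theoretic fact that for a homomorphism $\varphi_{i,j}:A_i\to A_j$, the image $\im(\varphi_{i,j})\simeq A_i/\mathrm{Ker}(\varphi_{i,j})$ has order dividing $|A_i|$, while as a subgroup of $A_j$ its order also divides $|A_j|$ by Lagrange. Since $\gcd(|A_i|,|A_j|)=1$, the order of the image is forced to be $1$, so $\varphi_{i,j}=0$. This establishes condition (2) of Theorem~\ref{thm:2-reductive}, and applying that theorem yields the $2$-reductivity of $Q$.

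I do not expect any genuine obstacle here: the corollary is essentially a direct specialization of Theorem~\ref{thm:2-reductive}, and the only new ingredient is the coprimality argument, which is a one-line consequence of Lagrange's theorem. The mildest point of care is simply matching quantifiers correctly---the hypothesis gives \emph{some} coprime partner $B$ for each orbit $A$, which is precisely what condition (2) asks for (an existential over $i$ for each $j$), rather than a stronger universal statement. So the proof will be short: invoke the coprimality hypothesis, kill the relevant $\varphi_{i,j}$ by an order argument, and cite Theorem~\ref{thm:2-reductive}.
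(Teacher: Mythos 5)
Your proof is correct and follows the paper's argument exactly: the paper likewise deduces from coprimality of orbit sizes that the corresponding $\varphi_{i,j}$ must vanish (the image order divides both $|A_i|$ and $|A_j|$) and then invokes condition (2) of Theorem~\ref{thm:2-reductive}. You have merely spelled out the Lagrange-type order argument that the paper leaves implicit.
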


\begin{proof}
The condition implies that, in a corresponding affine mesh, for every $j$, there is $i$ such that $\varphi_{i,j}=0$, hence Theorem \ref{thm:2-reductive} applies.
\end{proof}

In particular, medial quandles with a one-element orbit are always 2-reductive.

The isomorphism theorem for 2-reductive medial quandles is significantly simpler than the general Theorem \ref{thm:isomorphism}, because the homomorphism matrices are trivial.

\begin{theorem}\label{thm:2-reductive_iso}
Let $\mathcal A=(A_i;\,0;\,c_{i,j})$ and $\mathcal A'=(A_i';\,0;\,c_{i,j}')$ be two indecomposable affine meshes, over the same index set $I$.
Then the sums of $\A$ and $\A'$ are isomorphic quandles if and only if there is $\pi\in S_n$ and $\psi_i:A_i\simeq A_{\pi i}'$ such that $\psi_j(c_{i,j})=c_{\pi i,\pi j}'$.
\end{theorem}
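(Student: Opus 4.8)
The plan is to specialize the general Isomorphism Theorem \ref{thm:isomorphism} to the 2-reductive situation. By Theorem \ref{thm:2-reductive}, a 2-reductive medial quandle represented by an indecomposable mesh has all homomorphisms equal to zero, so both $\A$ and $\A'$ genuinely have the form $(A_i;0;c_{i,j})$ and $(A_i';0;c_{i,j}')$. The sums are isomorphic quandles if and only if the meshes are homologous, so the entire proof reduces to checking what homology means when every $\varphi_{i,j}$ and $\varphi_{i,j}'$ vanishes.

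First I would write out conditions (H1) and (H2) under the assumption $\varphi_{i,j}=\varphi_{i,j}'=0$ for all $i,j$. Condition (H1) reads $\psi_j\varphi_{i,j}=\varphi_{\pi i,\pi j}'\psi_i$, which becomes $0=0$ and is therefore satisfied automatically. Condition (H2) reads
$$\psi_j(c_{i,j})=c_{\pi i,\pi j}'+\varphi_{\pi i,\pi j}'(d_i)-\varphi_{\pi j,\pi j}'(d_j),$$
and since the two homomorphism terms vanish, this collapses to $\psi_j(c_{i,j})=c_{\pi i,\pi j}'$. Crucially, the constants $d_i$ drop out of the condition entirely, so homology no longer depends on them. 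Thus two such meshes are homologous exactly when there is a permutation $\pi$ of $I$ and group isomorphisms $\psi_i:A_i\to A_{\pi i}'$ with $\psi_j(c_{i,j})=c_{\pi i,\pi j}'$, which is precisely the condition in the statement.

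For the two implications, I would argue as follows. For the forward direction, if the sums are isomorphic, then by Theorem \ref{thm:isomorphism} the meshes are homologous, and by the computation above homology gives the required $\pi$ and $\psi_i$ with $\psi_j(c_{i,j})=c_{\pi i,\pi j}'$. For the converse, given $\pi$ and the isomorphisms $\psi_i$ satisfying $\psi_j(c_{i,j})=c_{\pi i,\pi j}'$, I would simply set $d_i=0$ for all $i$; then (H1) holds trivially and (H2) reduces to the given hypothesis, so the meshes are homologous and hence the sums are isomorphic by the ``if'' implication of Theorem \ref{thm:isomorphism} (which, as noted there, holds for arbitrary meshes).

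There is essentially no hard step here: the entire content is that the homomorphism matrices vanish, which annihilates both the $\varphi$-dependence in (H1) and the $d_i$-dependence in (H2). The only point requiring mild care is to confirm that $S_n$ in the statement is just the symmetric group on the index set $I$ (so $n=|I|$) and that the homology permutation $\pi$ supplies exactly this datum; one should also observe that indecomposability of $\A$ and $\A'$ is what justifies invoking the ``only if'' direction of Theorem \ref{thm:isomorphism}, while the ``if'' direction needs no such hypothesis. Given these observations, the proof is a one-line specialization, and the statement follows immediately.
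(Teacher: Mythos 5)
Your proposal is correct and matches the paper's proof exactly: the paper likewise derives this as a special case of Theorem \ref{thm:isomorphism}, noting that with $\varphi_{i,j}=\varphi_{i,j}'=0$ condition (H1) is trivial and (H2) collapses to $\psi_j(c_{i,j})=c_{\pi i,\pi j}'$ independently of the constants $d_i$. Your write-up is in fact slightly more careful than the paper's one-line argument, spelling out both implications and the role of indecomposability in the ``only if'' direction.
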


\begin{proof}
This is a special case of Theorem \ref{thm:isomorphism}.
Since $\varphi_{i,j}=0$ and $\varphi_{i,j}'=0$, condition (H1) is trivial, and condition (H2) is satisfied regardless the values of the constants $d_i$.
\end{proof}

\begin{example}
Up to isomorphism, there is precisely one medial quandle $Q$ with two orbits of given coprime size. According to Corollary
\ref{cor:copr}, $Q$ is 2-reductive, hence it is the sum of an indecomposable mesh
$$((A,B),\,\left(\begin{smallmatrix}0&0\\0&0\end{smallmatrix}\right),\,\left(\begin{smallmatrix}0&b\\a&0\end{smallmatrix}\right)).$$
Indecomposability implies that $A=\langle a\rangle$ and $B=\langle b\rangle$, hence the groups are cyclic, and according to Theorem
\ref{thm:2-reductive_iso}, all choices of $a,b$ result in isomorphic quandles.
\end{example}

\begin{example} \label{ex:orbits_Zp}
Consider all 2-reductive medial quandles with $m$ orbits, all of a prime size $p$. They are given by indecomposable affine meshes of the form $((\Z_p,\dots,\Z_p);0;c_{i,j})$ over the set $\{1,\dots,m\}$, i.e., by $m\times m$ matrices over $\Z_p$ with zero diagonal such that all columns are non-zero (and thus generate the group $\Z_p$).  There are precisely $(p^{m-1}-1)^m$ such matrices. Since every quandle with $n=pm$ elements is isomorphic to at most $n!$ quandles, the number of isomorphism classes is at least
$$\frac{(p^{m-1}-1)^m}{(pm)!}\geq\frac{p^{(m-2)m}}{(pm)^{pm}}=p^{m^2-2m-(1+\log_pm)pm}.$$
\end{example}

We see that 2-reductive medial quandles have a rather combinatorial character: they are constructed from any tuple of abelian groups and an arbitrary matrix of constants with zero diagonal and columns generating the respective fibres. The operation is rather simplistic, $$a*b=b+c_{i,j},$$ for every $a\in A_i$ and $b\in A_j$. An isomorphism between quandles is given by isomorphisms between the fibres preserving the constants. This informally explains the combinatorial explosion in the number of 2-reductive medial quandles constructed in Example \ref{ex:orbits_Zp}, and also witnessed by computation in Section \ref{ssec:computation}.
In contrast, our computation results suggest that non-2-reductive medial quandles are fairly rare.

\section{Symmetry}\label{sec:symmetry}

A binary algebra $Q$ is called (left) $n$-\emph{symmetric}, if $(L_a)^n=1$ for every $a\in Q$, i.e., if it satisfies the identity
\begin{equation*}
\underbrace{x(x(\ldots(x}_{n-\text{times}}y)))=y.
\end{equation*}
for every $x,y\in Q$. Note that 2-symmetry is just another name for being \emph{involutory}. (The term ``symmetric" is somewhat misleading, nevertheless widely used in papers on binary algebras. Involutory quandles are also called \emph{keis} in some papers.)

Involutory medial quandles were investigated by Roszkowska in great
detail in the aforementioned series \cite{R87,R89,R99a,R99b}. The
first and the second papers contain a syntactic analysis, resulting
in the description of all varieties (equational theories) of
involutory medial quandles. The third paper develops a structure
theory; the main result, \cite[Theorem 4.3]{R99a}, is obtained in
the present section as Corollary \ref{cor:2-symmetric}. The last
paper contains the classification of subdirectly irreducible
involutory medial quandles, see the discussion in Section
\ref{sec:congruences}.

Let $Q=\aff{A,f}$ be an affine quandle. It is easy to calculate
$$\underbrace{x(x(\ldots(x}_{n-\text{times}}y)))=(1-f^n)(x)+f^n(y),$$
hence $Q$ is $n$-symmetric if and only if $f^n=1$.

\begin{exm}
Let $F$ be a field and $r$ a primitive $n$-th root of unity. Then
$\aff{F,r}$ is an $n$-symmetric medial quandle which is not
$m$-symmetric for any $m<n$. For example, we can take $F=\mathbb C$
and $r=e^{2\pi i/n}$, or we can take $F=\Z_p$ with $p$ prime
and $n\mid p-1$.
\end{exm}

Notice that $1-f^n=(1-f)\cdot\sum_{i=0}^{n-1}f^i$. If the sum is zero, then $\aff{A,f}$ is $n$-symmetric. The converse is not true in general, e.g., for $A=\Z_{15}$ and $f=11$ we have $f^2=1$ and $f\neq\pm1$. Our next result implies that the orbits of $n$-symmetric medial quandles can always be represented as $\aff{A,f}$ with $f\in\aut A$ satisfying $\sum_{i=0}^{n-1}f^i=0$. Similarly to the reductive case, this is the property that charaterizes the affine meshes that result in $n$-symmetric quandles.

\begin{prop}\label{prop:symmetric}
Let $\mathcal A=(A_i;\,\varphi_{i,j};\,c_{i,j})$ be an indecomposable affine mesh over a set $I$. Then the sum of $\mathcal A$ is $n$-symmetric if and only if, for every $i\in I$, $$\sum_{r=0}^{n-1}(1-\varphi_{i,i})^r=0.$$
\end{prop}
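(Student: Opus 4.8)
The plan is to mimic the structure of the proof of Proposition \ref{prop:reductive}, which handled the $m$-reductive case, since $n$-symmetry is the analogous condition on left translations (powers equal to the identity) rather than on right translations (powers constant). First I would write $Q$ for the sum of the mesh and compute explicitly the $n$-fold left translation $\underbrace{a(a(\ldots(a}_{n}b)))$ for $a\in A_i$ and $b\in A_j$. Since within the sum we have $a*b=c_{i,j}+\varphi_{i,j}(a)+(1-\varphi_{j,j})(b)$, and the relevant left translation by $a$ acts on elements of $A_j$ (the orbit containing $b$) as $x\mapsto c_{i,j}+\varphi_{i,j}(a)+(1-\varphi_{j,j})(x)$, this is an affine map of $A_j$ with linear part $1-\varphi_{j,j}$ and a fixed translation part $t:=c_{i,j}+\varphi_{i,j}(a)$. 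Iterating an affine map $n$ times gives linear part $(1-\varphi_{j,j})^n$ and translation part $\sum_{r=0}^{n-1}(1-\varphi_{j,j})^r(t)$, so I would record the identity
\begin{equation*}
\underbrace{a(a(\ldots(a}_{n}b)))=(1-\varphi_{j,j})^n(b)+\sum_{r=0}^{n-1}(1-\varphi_{j,j})^r\bigl(c_{i,j}+\varphi_{i,j}(a)\bigr).
\end{equation*}

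For the forward direction, assuming $n$-symmetry, the above expression equals $b$ for all $a,b$. Setting $b=0$ in the group $A_j$ kills the first summand (since $(1-\varphi_{j,j})^n(0)=0$) and forces $\sum_{r=0}^{n-1}(1-\varphi_{j,j})^r(c_{i,j}+\varphi_{i,j}(a))=0$ for every $i\in I$ and $a\in A_i$. Indecomposability says the elements $c_{i,j}+\varphi_{i,j}(a)$ generate $A_j$, so the homomorphism $\sum_{r=0}^{n-1}(1-\varphi_{j,j})^r$ vanishes on a generating set and hence is identically zero on $A_j$; as $j$ was arbitrary this is exactly the claimed condition. For the converse, assuming $\sum_{r=0}^{n-1}(1-\varphi_{j,j})^r=0$ for every $j$, I would first note that multiplying this relation by $1-\varphi_{j,j}$ and using the telescoping identity $(1-\varphi_{j,j})\sum_{r=0}^{n-1}(1-\varphi_{j,j})^r=1-(1-\varphi_{j,j})^n$ (compare the telescoping step in Proposition \ref{prop:reductive}) gives $1-(1-\varphi_{j,j})^n=0$, so $(1-\varphi_{j,j})^n=1$; this makes the first summand equal to $b$. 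The second summand vanishes directly by hypothesis, so the iterate equals $b$ and $Q$ is $n$-symmetric.

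The main obstacle, and the one deserving care, is the converse: the hypothesis $\sum_{r=0}^{n-1}(1-\varphi_{j,j})^r=0$ does \emph{not} by itself obviously give $(1-\varphi_{j,j})^n=1$, and indeed the pre-statement discussion warns that the factorization $1-f^n=(1-f)\sum_{i=0}^{n-1}f^i$ has a non-invertible converse in general. The resolution is precisely that here we are not merely asking $1-f^n$ to vanish but rather deriving $(1-f)^n=1$ from the vanishing of $\sum_{r=0}^{n-1}(1-\varphi_{j,j})^r$ via the telescoping identity, which is an unconditional algebraic consequence; so the subtlety is only in being careful that we use the sum condition (on $1-\varphi_{j,j}$) and not the weaker $1-(1-\varphi_{j,j})^n=0$ as the hypothesis. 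I would double-check the indexing in the telescoping sum and confirm that $L_a$ restricted to $A_j$ genuinely has linear part $1-\varphi_{j,j}$ independent of $i$, which is immediate from the formula for $*$. With these points settled the argument is a direct parallel of the reductive case.
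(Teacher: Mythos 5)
Your proposal follows the paper's proof essentially verbatim: the same closed formula for the $n$-fold left translation (an affine map on $A_j$ with linear part $1-\varphi_{j,j}$), the same specialization $b=0$ combined with indecomposability for the forward direction, and the same telescoping trick for the converse. One slip to correct: the telescoping identity you quote has the wrong multiplier --- with $g=1-\varphi_{j,j}$ the true identity is $(1-g)\sum_{r=0}^{n-1}g^r=1-g^n$, so you must multiply the hypothesis $\sum_{r=0}^{n-1}(1-\varphi_{j,j})^r=0$ by $\varphi_{j,j}$ (not by $1-\varphi_{j,j}$, and the displayed identity $(1-\varphi_{j,j})\sum_{r=0}^{n-1}(1-\varphi_{j,j})^r=1-(1-\varphi_{j,j})^n$ is false as written) to conclude $(1-\varphi_{j,j})^n=1$; this is exactly the paper's step, which sets $f_i=1-\varphi_{i,i}$ and uses $1-f_i^n=(1-f_i)\sum_{r=0}^{n-1}f_i^r$. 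With that one-symbol repair your argument coincides with the paper's.
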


Recall that every orbit $A_i$, as a subquandle, equals $\aff{A_i,1-\varphi_{i,i}}$. This justifies the claim above Proposition \ref{prop:symmetric}.

\begin{proof}
Let $Q$ be the sum of the mesh $\mathcal A$. Then, for every $a\in A_i$ and $b\in A_j$,
\begin{equation}\label{eq:nsym}
\underbrace{a(a(\ldots(a}_{n-\text{times}}b)))=\left(\sum_{r=0}^{n-1}(1-\varphi_{j,j})^r\right)(c_{i,j}+\varphi_{i,j}(a))\,+\,(1-\varphi_{j,j})^n(b).
\end{equation}

($\Rightarrow$)
Assuming $n$-symmetry, expression \eqref{eq:nsym} equals $b$, and taking $b=0$ in the group $A_j$, we obtain
$$\left(\sum_{r=0}^{n-1}(1-\varphi_{j,j})^r\right)(c_{i,j}+\varphi_{i,j}(a))=0.$$
Indecomposability of the mesh means that
$$A_j=\langle c_{i,j}+\varphi_{i,j}(a):\ i\in I,\ a\in A_i\rangle,$$
hence
$$\left(\sum_{r=0}^{n-1}(1-\varphi_{j,j})^r\right)(x)=0$$ for every $x\in A_j$.

($\Leftarrow$)
Put $f_i=1-\varphi_{i,i}$ for every $i\in I$. The assumption says that $\sum_{r=0}^{n-1}f_i^r=0$, hence also $1-f_i^n=(1-f_i)(\sum_{r=0}^{n-1}f_i^r)=0$, and thus $(1-\varphi_{i,i})^n=f_i^n=1$, for every $i\in I$.  The $n$-symmetric law follows immediately from \eqref{eq:nsym}.
\end{proof}

As a special case, we obtain Roszkowska's representation theorem for involutory medial quandles \cite[Theorem 4.3]{R99a}.
(Roszkowska uses a slightly different notation: the translation between her mappings $h^i_j:A_i\to A_j$ and our parameters is: $h^i_j(a)=\varphi_{i,j}(a)+c_{i,j}$ in one direction, and $\varphi_{i,j}(a)=h^i_j(a)-h^i_j(0)$, $c_{i,j}=h^i_j(0)$ in the other.)

\begin{corollary}\label{cor:2-symmetric}
A binary algebra is an involutory medial quandle if and only if it is the sum of an indecomposable affine mesh $\mathcal A=(A_i;\,\varphi_{i,j};\,c_{i,j})$ over a set $I$ where $\varphi_{i,i}=2$ for every $i\in I$.
\end{corollary}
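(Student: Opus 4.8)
The plan is to derive Corollary \ref{cor:2-symmetric} as a direct specialization of Proposition \ref{prop:symmetric} to the case $n=2$. First I would observe that an involutory quandle is by definition $2$-symmetric, so the sum of a mesh $(A_i;\varphi_{i,j};c_{i,j})$ is involutory precisely when Proposition \ref{prop:symmetric} holds with $n=2$. It remains to translate the condition $\sum_{r=0}^{1}(1-\varphi_{i,i})^r=0$ into the asserted condition $\varphi_{i,i}=2$.

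Writing out the sum for $n=2$, the condition becomes
$$(1-\varphi_{i,i})^0+(1-\varphi_{i,i})^1=1+(1-\varphi_{i,i})=2-\varphi_{i,i}=0,$$
where $1$ denotes the identity endomorphism of $A_i$ and $2$ denotes the doubling map $x\mapsto x+x$. Hence the Proposition's condition holds for all $i$ if and only if $\varphi_{i,i}=2$ for every $i\in I$, which is exactly the statement of the Corollary. I would note that, since the sum of any affine mesh is a medial quandle by Lemma \ref{lem:mesh1}, and since every medial quandle is the sum of some indecomposable affine mesh by Theorem \ref{thm:decomposition}, the equivalence is genuinely between ``involutory medial quandle'' and ``sum of an indecomposable mesh with $\varphi_{i,i}=2$''.

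This argument is essentially immediate; the only thing requiring a moment of care is the identification of the endomorphism $2-\varphi_{i,i}$ with the zero homomorphism, and the observation that $1-\varphi_{i,i}=f_i$ where $f_i$ is the automorphism realizing the orbit $A_i=\aff{A_i,1-\varphi_{i,i}}$ (so that $\varphi_{i,i}=2$ corresponds to $f_i=-1$, the familiar form of an involutory affine quandle). There is no real obstacle here beyond bookkeeping; the substance of the result lives entirely in Proposition \ref{prop:symmetric}, which in turn rests on the telescoping computation of the iterated translation in equation \eqref{eq:nsym}. The Corollary is then simply the $n=2$ instance, and I would present it as such rather than redoing any of the mesh calculations.
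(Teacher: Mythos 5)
Your proposal is correct and is essentially identical to the paper's own proof: the paper likewise combines Theorem \ref{thm:decomposition} with the $n=2$ case of Proposition \ref{prop:symmetric}, reducing the condition to $(1-\varphi_{i,i})^0+(1-\varphi_{i,i})^1=2-\varphi_{i,i}=0$. The bookkeeping details you flag (interpreting $2$ as the doubling endomorphism, and noting $\varphi_{i,i}=2$ means each orbit is $\aff{A_i,-1}$) match the paper's treatment, the latter appearing as Corollary \ref{cor:2-symmetric*}.
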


\begin{proof}
Theorem \ref{thm:decomposition} and Proposition \ref{prop:symmetric} say that involutory (i.e., 2-symmetric) medial quandles are precisely the sums of indecomposable affine meshes satisfying $(1-\varphi_{i,i})^0+(1-\varphi_{i,i})^1=2-\varphi_{i,i}=0$ for every $i\in I$.
\end{proof}

Affine quandles of the form $\aff{A,-1}$ are called \emph{dihedral quandles} \cite{Car}, or \emph{cores} of abelian groups \cite{R87}.
Corollary \ref{cor:2-symmetric} can be restated as follows.

\begin{corollary}\label{cor:2-symmetric*}
Let $Q$ be a medial quandle. Then $Q$ is involutory if and only if all orbits are dihedral quandles (cores of abelian groups).
\end{corollary}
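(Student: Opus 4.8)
The plan is to derive Corollary \ref{cor:2-symmetric*} directly from Corollary \ref{cor:2-symmetric} by combining it with the already-established orbit decomposition and the explicit description of orbit subquandles. By Theorem \ref{thm:decomposition}, every medial quandle $Q$ is the sum of an indecomposable affine mesh $(A_i;\,\varphi_{i,j};\,c_{i,j})$, and the orbits of $Q$ coincide with the fibres $A_i$. Moreover, the excerpt records that each orbit, viewed as a subquandle, equals $\aff{A_i,1-\varphi_{i,i}}$. So the whole statement is a matter of translating the condition ``$\varphi_{i,i}=2$ for every $i$'' from Corollary \ref{cor:2-symmetric} into the statement ``every orbit is a dihedral quandle.''

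First I would invoke Corollary \ref{cor:2-symmetric}: $Q$ is involutory if and only if it is the sum of an indecomposable affine mesh with $\varphi_{i,i}=2$ for every $i\in I$. Next I would observe that for such a mesh the orbit subquandle $A_i$ equals $\aff{A_i,1-\varphi_{i,i}}=\aff{A_i,1-2}=\aff{A_i,-1}$, which is by definition the dihedral quandle (the core of the abelian group $A_i$). Conversely, if every orbit of $Q$ is dihedral, then in a canonical mesh each orbit subquandle $\aff{A_i,1-\varphi_{i,i}}$ is isomorphic to some $\aff{A_i,-1}$, forcing $1-\varphi_{i,i}=-1$, i.e., $\varphi_{i,i}=2$, whence Corollary \ref{cor:2-symmetric} gives that $Q$ is involutory. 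This is the entire logical skeleton.

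The one point deserving care is the converse direction: from ``each orbit is dihedral'' I must extract the precise equality $\varphi_{i,i}=2$, not merely that the orbit is isomorphic to \emph{a} dihedral quandle. The clean way is to fix a canonical mesh $\A_{Q,E}$ (Lemma \ref{lem:mesh3}), so that the orbit subquandle on $A_i$ is literally $\aff{A_i,1-\varphi_{i,i}}$ on the nose; the hypothesis that this orbit is involutory as a subquandle then means $(1-\varphi_{i,i})^2=1$. Applying Corollary \ref{cor:2-symmetric} (equivalently Proposition \ref{prop:symmetric} with $n=2$) to the full quandle—or simply noting that being involutory is inherited by subquandles—gives that $Q$ involutory is equivalent to each orbit being involutory, and the latter is equivalent to each orbit being dihedral via $2-\varphi_{i,i}=0$. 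I expect the only mild obstacle to be phrasing this so the reader sees that ``$Q$ involutory'' and ``every orbit involutory'' are interchangeable here, which is immediate because $(L_a)^2=1$ for all $a\in Q$ restricts to $(L_a)^2=1$ on each orbit, and conversely every left translation of $Q$ permutes within a single orbit.

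\begin{proof}
By Theorem \ref{thm:decomposition}, $Q$ is the sum of an indecomposable affine mesh $(A_i;\,\varphi_{i,j};\,c_{i,j})$, whose fibres $A_i$ are exactly the orbits of $Q$, and each orbit subquandle equals $\aff{A_i,1-\varphi_{i,i}}$. By Corollary \ref{cor:2-symmetric}, $Q$ is involutory if and only if $\varphi_{i,i}=2$ for every $i\in I$. But $\varphi_{i,i}=2$ is equivalent to $1-\varphi_{i,i}=-1$, i.e., to the orbit $\aff{A_i,1-\varphi_{i,i}}$ being the dihedral quandle $\aff{A_i,-1}$. Hence $Q$ is involutory if and only if every orbit is a dihedral quandle (the core of the abelian group $A_i$).
\end{proof}
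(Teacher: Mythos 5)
Your displayed proof is, in substance, exactly what the paper does: the paper offers no separate argument for Corollary \ref{cor:2-symmetric*}, calling it a restatement of Corollary \ref{cor:2-symmetric}, and your translation ``$\varphi_{i,i}=2$ iff $1-\varphi_{i,i}=-1$ iff the orbit $\aff{A_i,1-\varphi_{i,i}}$ is the core $\aff{A_i,-1}$ of its orbit group'' is precisely that restatement. Read this way --- ``orbit is dihedral'' meaning that the orbit, with its canonical affine structure $\aff{\orb Qe,L_e}$ from Proposition \ref{prop:affine_orbit}, is the core of its orbit group --- your proof is correct and complete.

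However, the ``one point deserving care'' that you single out, namely upgrading the converse to the reading ``each orbit is \emph{isomorphic} to some dihedral quandle,'' is handled incorrectly, and in fact cannot be handled: under that reading the converse is false. Your claim that $\aff{A_i,1-\varphi_{i,i}}\simeq\aff{B,-1}$ forces $1-\varphi_{i,i}=-1$ fails for non-connected orbits, where Corollary \ref{c:iso_conn} does not apply and the underlying group is not a quandle invariant: for $s=\left(\begin{smallmatrix}0&1\\1&0\end{smallmatrix}\right)$ one checks that $\aff{\Z_2^2,s}\simeq\aff{\Z_4,-1}$ (via $00\mapsto0$, $01\mapsto1$, $11\mapsto2$, $10\mapsto3$), yet $s\neq-1$ on $\Z_2^2$. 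Your fallback argument is also broken at two points. First, ``orbit involutory'' only gives $(1-\varphi_{i,i})^2=1$, which does \emph{not} imply $2-\varphi_{i,i}=0$; the paper itself warns about this before Proposition \ref{prop:symmetric} with $A=\Z_{15}$, $f=11$. Second, ``$Q$ involutory $\Leftrightarrow$ every orbit involutory'' holds only in the forward direction: cross-orbit translations $L_a|_{A_j}$, $a\in A_i$, $i\neq j$, are not controlled by orbit-wise symmetry, and Proposition \ref{prop:symmetric} needs indecomposability of the whole mesh to force $2-\varphi_{j,j}=0$, a strictly stronger condition than $(1-\varphi_{j,j})^2=1$. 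Concretely, the sum of the indecomposable affine mesh over $\{1,2\}$ with $A_1=\Z_2^2$, $A_2=\Z_2$, $\varphi_{1,1}=1+s$, $\varphi_{2,1}(1)=(1,1)$, $\varphi_{1,2}=\varphi_{2,2}=0$, $c_{2,1}=(0,1)$, $c_{1,2}=1$ (conditions (M1)--(M4) and indecomposability are routine to verify) is a $6$-element medial quandle whose orbits are $\aff{\Z_2^2,s}\simeq\aff{\Z_4,-1}$ and $\aff{\Z_2,1}=\aff{\Z_2,-1}$ --- both isomorphic to cores --- yet it is not involutory: with $a=1\in A_2$, $b=(0,0)\in A_1$ one gets $a*(a*b)=(1,1)\neq b$. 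So the supplementary paragraphs of your proposal should be deleted: the corollary must be read with respect to the canonical orbit groups, which is exactly what your final proof does, and no more than that can be proved.
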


We finish the section with remarks on medial quandles that are reductive and symmetric at the same time.

\begin{example}
Let $m$ be a natural number, $p>m$ a prime and let $Q=\aff{(\Z_p)^m,f}$ where
$$f=\left(\begin{matrix} 1&1&\dots&0&0\\&&\dots&&\\0&0&\dots&1&1\\0&0&\dots&0&1\end{matrix}\right)$$
is a Jordan matrix. It is not difficult to calculate that $Q$ is $p$-symmetric, but not $i$-symmetric for any $i<p$, and it is $m$-reductive, but not $i$-reductive for any $i<m$.
\end{example}

As an immediate corollary to our Propositions \ref{prop:reductive} and \ref{prop:symmetric}, we also obtain \cite[Proposition 2.2]{RR89}: in 2-reductive $n$-symmetric medial quandles, the orbit groups have exponent dividing $n$. Indeed, from 2-reductivity we get $\varphi_{i,i}=0$, and $n$-symmetry forces $0=\sum_{r=0}^{n-1}(1-\varphi_{i,i})^r=n$ in every orbit.
Such quandles were called \emph{$n$-cyclic groupoids} in \cite{Plo,RR89}. The first paper contains a description of free $n$-cyclic groupoids, and of subdirectly irreducible $n$-cyclic groupoids for $n$ prime. The second paper develops a structural theorem we described in Theorem \ref{thm:2-reductive}, and, using this representation, they describe congruences and subdirectly irreducible algebras for arbitrary $n$ (for the statement, see also our Theorem \ref{thm:red_si}).

The dual case, $m$-reductive involutory (i.e., 2-symmetric) medial quandles, is also interesting, although we could not find any explicit reference in literature. An analogous argument leads to the conclusion that the orbit groups have exponent dividing $2^{m-1}$, because $\varphi_{i,i}=2$, and thus $2^{m-1}=0$ by $m$-reductivity.

We also mention that \cite[Section 5]{PRR96} contains some independence results concerning the varieties of $n$-symmetric $m$-reductive medial quandles, their duals and latin medial quandles.

The whole story of symmetric and reductive binary algebras can be traced back to 1970's when mathematicians searched for equational theories with very few term operations. The variety of 2-reductive involutory medial quandles has precisely $n$ essentially $n$-ary term operations \cite{Pl71}.

\section{Enumerating medial quandles}\label{sec:enumeration}

\subsection{Asymptotic results}\label{ssec:asymptotics}

Blackburn \cite{B} proved that the number of isomorphism classes of quandles of
order $n$ grows as $2^{\Theta(n^2)}$ (we recall that $f=\Theta(g)$ if $f=O(g)$ and $g=O(f)$). For the lower bound, he
provides a construction of $2^{\frac14n^2-O(n\log n)}$ involutory
quandles. His construction is essentially a special case of Example
\ref{ex:orbits_Zp}, with $p=2$. Since such quandles are 2-reductive,
we can refine Blackburn's statement of \cite[Theorem 11]{B}.

\begin{theorem}
The number of isomorphism classes of 2-reductive involutory medial quandles of order $n$ is at least $2^{\frac14n^2-O(n\log n)}$.
\end{theorem}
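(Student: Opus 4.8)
The goal is a lower bound of $2^{\frac14 n^2 - O(n\log n)}$ on the number of isomorphism classes of $2$-reductive involutory medial quandles of order $n$. My plan is to specialize the counting machinery already developed in Example~\ref{ex:orbits_Zp} to the case $p=2$, and then verify that the resulting quandles are additionally involutory, so that they all live in the class being counted. First I would set $m=n/2$ and consider quandles of order $n$ built as sums of indecomposable affine meshes of the form $((\Z_2,\dots,\Z_2);0;c_{i,j})$ over the index set $\{1,\dots,m\}$, i.e.\ $m\times m$ matrices over $\Z_2$ with zero diagonal and all columns nonzero. By Theorem~\ref{thm:2-reductive} these sums are exactly the $2$-reductive medial quandles with $m$ orbits of size $2$.

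The crucial extra point, beyond what Example~\ref{ex:orbits_Zp} already provides, is that these quandles are \emph{involutory}. By Corollary~\ref{cor:2-symmetric}, a medial quandle is involutory precisely when it is the sum of an indecomposable affine mesh with $\varphi_{i,i}=2$ for every $i$. Over $\Z_2$ we have $2=0$, so the condition $\varphi_{i,i}=0$ that already holds by $2$-reductivity coincides with the involutory condition. Hence every quandle counted in Example~\ref{ex:orbits_Zp} with $p=2$ is automatically involutory, and the count obtained there applies verbatim to our more restrictive class.

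It then remains to extract the asymptotics. Plugging $p=2$ into the estimate of Example~\ref{ex:orbits_Zp} gives at least
$$
\frac{(2^{m-1}-1)^m}{(2m)!}\ \geq\ 2^{m^2-2m-(1+\log_2 m)\cdot 2m}
$$
isomorphism classes among quandles of order $n=2m$. Writing the exponent in terms of $n$, we have $m=n/2$, so $m^2=\tfrac14 n^2$, while the remaining terms $2m+(1+\log_2 m)\cdot 2m = n + n\log_2(n/2) + n$ are $O(n\log n)$. This yields the bound $2^{\frac14 n^2 - O(n\log n)}$ for even $n$; for odd $n$ one simply applies the even case to $n-1$ and discards one point (or takes $m=\lfloor n/2\rfloor$ and fills the remaining orbit trivially), which changes the exponent only by $O(n)$ and so is absorbed into the error term.

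The main obstacle is conceptual rather than computational: one must be sure that the matrices counted in Example~\ref{ex:orbits_Zp} really do parametrize \emph{pairwise distinct} isomorphism classes densely enough, and that the crude division by $(pm)!=(2m)!$ (the maximum possible number of quandles isomorphic to a given one) is legitimate. Both of these are handled by the earlier development: Theorem~\ref{thm:2-reductive_iso} controls isomorphism of these meshes via the constants $c_{i,j}$, and the factor $(2m)!$ is the trivial upper bound on the orbit of any labelled quandle under relabelling. Thus no genuinely new argument is needed beyond observing the coincidence $2=0$ in $\Z_2$ that forces involutivity, and the whole statement follows by combining Example~\ref{ex:orbits_Zp}, Theorem~\ref{thm:2-reductive}, and Corollary~\ref{cor:2-symmetric}.
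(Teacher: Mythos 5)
Your proposal is correct and matches the paper's own proof essentially verbatim: specialize Example~\ref{ex:orbits_Zp} to $p=2$, observe via Corollary~\ref{cor:2-symmetric} that $2=0$ in $\Z_2$ makes these $2$-reductive quandles automatically involutory, and handle odd $n$ by adjoining a trivial fibre $\Z_1$. One small caution: your first suggestion for odd $n$ (``apply the even case to $n-1$ and discard one point'') does not by itself produce quandles of order $n$, but your parenthetical alternative of adding a one-element orbit is exactly what the paper does and is the right fix.
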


\begin{proof}
Let $n$ be even. All affine meshes of the form $((\Z_2,\dots,\Z_2);\,0;(c_{i,j}))$ with $n/2$ fibres result in 2-reductive involutory medial quandles (see Theorem \ref{thm:2-reductive}, Corollary \ref{cor:2-symmetric} and notice that $0=2$). In Example \ref{ex:orbits_Zp}, we calculated that such meshes result in at least
$$2^{(\frac n2)^2-2(\frac n2)-(1+\log\frac n2)n}=2^{\frac14n^2-O(n\log n)}$$
pairwise non-isomorphic quandles.
For $n$ odd, consider an additional fibre $\Z_1$ and obtain the same estimate.
\end{proof}

Using our theory, it is not difficult to prove a tight upper bound for 2-reductive medial quandles.

\begin{theorem}
The number of isomorphism classes of 2-reductive medial quandles of order $n$ is at most $2^{(\frac14+o(1))n^2}$.
\end{theorem}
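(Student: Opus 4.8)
The plan is to count 2-reductive medial quandles by counting their canonical meshes, using Theorem~\ref{thm:2-reductive} to reduce the problem to pure combinatorics. By that theorem, a 2-reductive medial quandle of order $n$ is the sum of an indecomposable affine mesh of the form $(A_i;\,0;\,c_{i,j})$ over an index set $I$, where the $A_i$ are abelian groups with $\sum_i|A_i|=n$ and the constants $c_{i,j}\in A_j$ have zero diagonal, with each column generating $A_j$. Since all homomorphism matrices vanish, the only data are the tuple of groups and the constant matrix, and by Theorem~\ref{thm:2-reductive_iso} two such meshes give isomorphic quandles exactly when there is a permutation $\pi$ of $I$ and group isomorphisms $\psi_i:A_i\to A_{\pi i}'$ matching up the constants. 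So an upper bound on the number of \emph{labelled} meshes, divided by a crude accounting of the symmetry, will give the bound.

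First I would bound the number of labelled meshes for a fixed tuple of groups $(A_i)_{i\in I}$. The constant matrix is an $|I|\times|I|$ array whose $(i,j)$ entry lies in $A_j$ and whose diagonal is zero, so there are at most $\prod_j |A_j|^{|I|-1}$ choices; since each $|A_j|\le n$ and $|I|\le n$, this is at most $\prod_j|A_j|^{\,|I|}\le n^{\,n|I|}$. The dominant contribution comes from the choice of constants, which is roughly $\prod_j|A_j|^{|I|}$; writing $a_j=|A_j|$ with $\sum a_j=n$, the exponent is $(\sum_j\log_2 a_j)\cdot|I|$, and since $\sum_j\log_2 a_j\le n$ (as $\prod a_j\le 2^n$ when $\sum a_j=n$, by $\log_2 a_j\le a_j-1$), while $|I|\le n/2$ in the worst balanced case, one gets an exponent of order $\tfrac14 n^2$. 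More carefully, the product $\prod_j a_j^{|I|}$ is maximized by balancing, and a short optimization shows $\log_2\bigl(\prod_j a_j^{|I|}\bigr)\le(\tfrac14+o(1))n^2$, matching the lower bound of the previous theorem.

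Next I would handle the two remaining sources of multiplicity, showing they are absorbed into the $o(1)$ term. The number of choices of the isomorphism types of the groups $A_i$ is at most the number of partitions of $n$ times the number of abelian groups of each part size, which is subexponential in $n$, hence $2^{o(n^2)}$; and the number of \emph{indexings} (i.e.\ ways to assign groups to $I$) is at most $|I|!\le n!=2^{O(n\log n)}=2^{o(n^2)}$. Dividing the labelled count by the isomorphism group, or simply multiplying the per-tuple bound by these subexponential factors, the leading term $2^{(\frac14+o(1))n^2}$ is unaffected. \textbf{The main obstacle} I expect is the optimization step: verifying that over all admissible tuples $(a_j)$ with $\sum a_j=n$, the quantity $|I|\cdot\sum_j\log_2 a_j$ is genuinely bounded by $(\tfrac14+o(1))n^2$, and in particular that the balanced tuples with parts of size $2$ (giving $|I|=n/2$ and $\sum\log_2 a_j=n/2$, product of exponents $\tfrac14n^2$) are the extremizers rather than some skewed configuration. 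This is a concave/Lagrange-multiplier argument: one checks that increasing a part size trades off group count against per-column entropy in a way that never beats the all-$\Z_2$ configuration asymptotically, which is precisely why the bound is tight against the earlier construction with $p=2$.
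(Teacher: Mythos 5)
Your proposal is correct and is essentially the paper's own argument: both use Theorem \ref{thm:2-reductive} to reduce the problem to counting zero-diagonal constant matrices over tuples of abelian groups with orders summing to $n$, bound the count for a fixed tuple by roughly $\bigl(\prod_j a_j\bigr)^{|I|}$, show this is at most $2^{n^2/4}$, and absorb the subexponential number of partitions and of group isomorphism types into the $o(1)$ term. One caveat: your interim claim that ``$|I|\le n/2$ in the worst balanced case'' is false as stated (parts of size $1$ allow $|I|$ as large as $n$), but the optimization you flag as the main obstacle closes in one line from the inequality $\log_2 a_j\le a_j-1$ that you already cite, since it gives $|I|\cdot\sum_j\log_2 a_j\le |I|\,(n-|I|)\le n^2/4$ --- arguably slicker than the paper's discrete exchange argument showing that the all-$\Z_2$ partition is extremal.
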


\begin{proof}
Using Theorem \ref{thm:2-reductive}, an upper bound on the number of 2-reductive medial quandles of size $n$ can be calculated the following way: for each partition $n=n_1+\ldots+n_k$, and for each choice of $n_i$-element abelian groups, count the number of $k\times k$ matrices where the entry at the position $(i,j)$, $i\neq j$ comes from the $n_j$-element group, while the diagonal entries are zero (not all choices result in an indecomposable mesh, but this is irrelevant for the upper bound).

The number of isomorphism classes of $m$-element abelian groups is certainly at most $m$. Using this estimate, there are at most $n_1\cdot\ldots\cdot n_k\cdot n_1^{k-1}\cdot\ldots\cdot n_k^{k-1}=(n_1\cdot\ldots\cdot n_k)^k$ isomorphism classes of 2-reductive medial quandles with given partition $n=n_1+\ldots+n_k$. An easy argument shows that the maximal value of $(n_1\cdot\ldots\cdot n_k)^k$, over all partitions of $n$, happens when $n_1=\ldots=n_{n/2}=2$ for $n$ even, and $n_1=\ldots=n_{(n-1)/2}=2$, $n_{(n+1)/2}=1$ for $n$ odd (sketch of the proof: first notice that replacing $n_i>3$ by $n_i-2,2$ increases the value, hence only $n_i\in\{1,2,3\}$ can maximize the expression; then it is easy to calculate that $1,3\to 2,2$ increases the value, hence either all $n_i\in\{1,2\}$, or all $n_i\in\{2,3\}$; in the former case, $1,1\to 2$ increases the value; in the latter case, $3\to 2,1$ increases the value). In either case, the maximal value is
$2^{\lfloor \frac14n^2\rfloor}$. The number of partitions of $n$ is asymptotically $2^{\Theta(\sqrt n)}$, hence there are at most $2^{\Theta(\sqrt n)}\cdot 2^{\lfloor \frac14n^2\rfloor}=2^{(\frac14+o(1))n^2}$ isomorphism classes of 2-reductive medial quandles.
\end{proof}

The upper bound on the number of isomorphism classes of all quandles, proved by Blackburn in \cite{B}, is $2^{(c+o(1))n^2}$ where $c\approx 1.5566$. For medial quandles, one can easily do better: following the proof of the previous theorem, additionally, we need to bound the number of homomorphism matrices. To do that, an obvious estimate $|\mathrm{Hom}(A,B)|\leq|B|^{\log |A|}$ (since an abelian group $A$ has at most $\log_2|A|$ generators) can be used, which results in the upper bound $2^{(\frac12+o(1))n^2}$ on the number of isomorphism classes medial quandles of order $n$.

While this is a better bound than Blackburn's, we think it is not optimal. Computational results in Table \ref{Fig:count_medial} suggest the following conjecture.

\begin{con}
The number of isomorphism classes of medial quandles of order $n$ is at most $2^{(\frac14+o(1))n^2}$.
\end{con}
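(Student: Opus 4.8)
The plan is to prove the conjecture by a sharpening of the counting argument behind the two preceding upper bounds, the crux being that the freedom in the homomorphism matrix $(\varphi_{i,j})$ and the freedom in the constant matrix $(c_{i,j})$ cannot both be realised at once, so that their joint contribution never exceeds the $2^{\frac14 n^2}$ coming from the constants alone. By Theorem \ref{thm:decomposition}, every medial quandle of order $n$ is the sum of an indecomposable affine mesh, so the number of isomorphism classes is at most the number of such meshes, and for an upper bound we may count meshes directly. Passing to homology classes under the group $G$ of Remark \ref{rem:iso_thm} cannot help here: a routine estimate gives $\log_2|G| = o(n^2)$, which can never lower an exponent from $\frac12$ to $\frac14$. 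Exactly as in the previous proofs, the number of admissible partitions $n = n_1 + \cdots + n_k$ and of tuples of abelian groups $(A_i)$ is $2^{o(n^2)}$; I would therefore fix these data and bound the number of pairs $((\varphi_{i,j}),(c_{i,j}))$ satisfying (M1)--(M4).

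First I would isolate the two contributions. Discarding (M4), the constant matrix has zero diagonal by (M2) and arbitrary off-diagonal entries $c_{i,j}\in A_j$, contributing at most $\prod_j n_j^{k-1}$; this is maximised at the all-$\Z_2$ partition and equals $2^{\frac14 n^2 + o(n^2)}$, exactly the conjectured order. The homomorphism matrix, constrained only by (M1) on the diagonal and by the compatibility relation (M3), contributes a further factor, and accounting for it through the crude estimate $|\mathrm{Hom}(A_i,A_j)|\le n_j^{\log_2 n_i}$ is precisely what inflates the general bound to $2^{(\frac12+o(1))n^2}$. The conjecture thus asserts that this entire homomorphism factor can be reclaimed, and the only structural device capable of reclaiming it is (M4), which couples the two matrices: for a \emph{fixed} homomorphism matrix $\varphi$, condition (M4) is a system of $\Z$-linear equations in the unknowns $c_{i,j}$, so the number $N(\varphi)$ of admissible constant systems is the cardinality of an affine solution set and drops below the unconstrained count by the index of the image of the associated linear map.

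The decisive step, and the one I expect to be the main obstacle, is a \emph{conservation estimate}: that $\sum_\varphi N(\varphi)$, summed over all $\varphi$ satisfying (M1) and (M3), is still at most $2^{(\frac14+o(1))n^2}$. Informally, each extra degree of freedom introduced by a nonzero off-diagonal $\varphi_{i,j}$ ought to be cancelled by an extra linear constraint that (M4) imposes on the column of constants indexed by the common target. Making this precise is delicate for three reasons: (M3) already forces rigidity among the $\varphi_{i,j}$ (for instance $\varphi_{i,i}^2 = \varphi_{j,i}\varphi_{i,j}$, cf.\ Proposition \ref{prop:gcd}), so the homomorphisms are not independent to begin with; the rank of the (M4) system depends on the images and kernels of the maps $\varphi_{j,k}$ and $\varphi_{k,k}$, which vary with the possibly non-isomorphic fibres; and a bounded number of \emph{large} orbits must be handled separately, since an orbit of size comparable to $n$ makes the quandle nearly connected and affine, a regime governed by Corollary \ref{c:iso_conn} rather than by the combinatorics of constants.

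Concretely, I would first establish the conservation estimate in the homogeneous elementary-abelian case, all $A_i = \Z_p$, where $\varphi$ and $c$ become $k\times k$ matrices over $\Z_p$, conditions (M1), (M3), (M4) become explicit (bi)linear equations, and $\sum_\varphi N(\varphi)$ becomes a point count on a variety over $\Z_p$; here the trade-off should be accessible by elementary linear algebra over $\Z_p$. I would then reduce the general case to this one by splitting each fibre into its $p$-primary layers (homomorphisms between groups of coprime order vanish) and invoking Proposition \ref{prop:gcd} to bound how the layers of distinct orbits can interact through nonzero homomorphisms, so that the global count factorises over the primes $p$ up to a factor $2^{o(n^2)}$. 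If the conservation estimate survives this stratification, the conjectured bound $2^{(\frac14+o(1))n^2}$ follows by summing the per-prime contributions and absorbing the $2^{o(n^2)}$ arising from the choice of partition and of groups.
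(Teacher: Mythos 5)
The statement you have attempted is not proved in the paper at all: it appears there as an open conjecture, motivated by the computational data of Table \ref{Fig:count_medial}. The authors prove the bound $2^{(\frac14+o(1))n^2}$ only for the 2-reductive subclass, and for general medial quandles they obtain only $2^{(\frac12+o(1))n^2}$, precisely via the estimate $|\mathrm{Hom}(A,B)|\leq|B|^{\log_2|A|}$ that you identify as the source of the inflation. Your proposal does not close this gap. Everything hinges on your ``conservation estimate'' $\sum_\varphi N(\varphi)\leq 2^{(\frac14+o(1))n^2}$, which you explicitly leave unproven and which is, after your (correct) reductions --- counting indecomposable meshes suffices by Theorem \ref{thm:decomposition}, the partition and group data contribute $2^{o(n^2)}$, and passing to homology classes under the group $G$ of Remark \ref{rem:iso_thm} gains only $2^{o(n^2)}$ --- essentially a restatement of the conjecture in the language of meshes. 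A plan whose ``decisive step'' is the full strength of the target statement is a research program, not a proof.

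Two further points where the plan as written would under-deliver. First, your assertion that (M4) is ``the only structural device capable of reclaiming'' the homomorphism factor is not accurate, and this matters for where the difficulty actually lies. In the all-$\Z_2$ case, (M1) forces $\varphi_{i,i}=0$, and then Lemma \ref{lem:2-reductive} and Theorem \ref{thm:2-reductive} (which use (M3), (M4) \emph{and} indecomposability jointly) force $\varphi\equiv 0$, so the homomorphism freedom vanishes entirely there; but no analogous collapse is known once the fibres have rank $\geq 2$ or composite exponent. Consider all fibres equal to $\Z_2^2$, so $k=n/4$: the homomorphism matrices alone could number up to $16^{k^2}=2^{n^2/4}$ while the constants contribute about $2^{n^2/8}$, so your conservation estimate must show that the constraints annihilate a full factor of order $2^{n^2/8}$ in the joint count --- and (M1) no longer kills the diagonal, since $1-\varphi_{i,i}$ can be invertible on $\Z_2^2$ with $\varphi_{i,i}\neq 0$. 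Nothing in your sketch engages with this regime. Second, your proposed reduction to the homogeneous elementary-abelian case via $p$-primary decomposition does not reach it: splitting fibres into $p$-primary layers leaves groups such as $\Z_4$ or $\Z_{p^2}\oplus\Z_p$ intact, which are $p$-primary but not elementary abelian, and the promised linear-algebra point count over $\Z_p$ is never carried out even in the case you do isolate. In short, the statement remains a conjecture, and your attempt reduces it to an equally open counting inequality.
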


Perhaps the same upper bound holds for all quandles, but we lack a computational evidence at this point. The numbers in Table \ref{Fig:count_all} are too small to take into account the fact that the number of non-abelian groups grows much faster than that of abelian groups.

\subsection{Computational results}\label{ssec:computation}

\begin{table}
$$\begin{array}{|r|ccccccccccc|}\hline
n & 1&2&3&4&5&6&7&8&9&10&...\\\hline
\text{all}& 1&1&3&7&22&73&298&1581&11079&&\\
\text{medial}& 1&1&3&6&18&58&251&1410&10311&98577&...\\\hline
\text{involutory}& 1& 1& 3& 5& 13& 41& 142& 665& 4288& 36455& \\
\text{medial involutory}& 1& 1& 3& 4& 11& 33& 121& 597& 4017& 35103& ... \\\hline
\end{array}$$
\caption{The number of quandles of size $n$, up to isomorphism.}
\label{Fig:count_all}
\end{table}

In Table \ref{Fig:count_all}, we compare the numbers of isomorphism classes of all quandles, medial quandles, involutory and involutory medial quandles. McCarron calculated the numbers in the first two rows for $n\leq9$, and in the third row for $n\leq10$, see OEIS sequences A181769, A165200, A178432 \cite{OEIS} (no reference is given there). Earlier, Ho and Nelson \cite{HN} enumerated quandles up to size 8, by an exhaustive search over all permutations that fill the rows of a multiplication table. According to our experiments, the brute force approach, an exhaustive search over all multiplication tables using a SAT-solver, works well up to size 7.

\begin{table}
\begin{small}
$$\begin{array}{|r|rrrrrrrrrrrr|}\hline
n & 1&2&3&4&5&6&7&8&9&10&11&12\\\hline
\text{medial}           & 1& 1& 3& 6& 18& 58& 251& 1410& 10311& 98577& 1246488& 20837439\\
\text{2-reductive}      & 1& 1& 2& 5& 15& 55& 246& 1398& 10301& 98532& 1246479& 20837171\\\hline
\text{medial inv.}      & 1& 1& 3& 4& 11& 33& 121&  597&  4017& 35103&  428081& 6851591 \\
\text{2-red. inv.} 			& 1& 1& 2& 4& 10& 31& 120&  594&  4013& 35092&  428080& 6851545 \\\hline
\end{array}$$
$$\begin{array}{|r|rrrrr|}\hline
n & 13&14&15&16&17\\\hline
\text{medial}           & 466087635& & 563753074951 & &\\
\text{2-reductive}      & 466087624& 13943041873& 563753074915 & 30784745506212 & \\\hline
\text{medial inv.}      & 153025577& 4535779061 & 187380634552 & & 801710433900517\\
\text{2-red. inv.} 			& 153025576& 4535778875 & 187380634539 & 10385121165057 & 801710433900516\\\hline
\end{array}$$
\end{small}
\caption{The number of medial quandles of size $n$, up to isomorphism.}
\label{Fig:count_medial}
\end{table}

Table \ref{Fig:count_medial} displays longer sequences, obtained with our new algorithms based on the affine mesh representation (see Section \ref{ssec:algorithms})\footnote{Our implementation in GAP \cite{gap} can be found at \url{http://www.karlin.mff.cuni.cz/\~{ }stanovsk/quandles}}.
Surprisingly, there are (relatively) very few medial quandles that are not 2-reductive. More detailed information about this class is displayed separately in Table~\ref{Fig:count_medial_nonred}.

\begin{table}
$$\begin{array}{|r|rrrrrrrrrrrrrrrr|}\hline
n & 1&2&3&4&5&6&7&8&9&10&11&12&13&14&15&...\\\hline
\text{non-2-reductive}              &0&0&1&1&3&3&5&12&10&45 &9&268 &11&  &36& \\
\text{reductive, not 2-reductive}   &0&0&0&0&0&2&0& 9& 0&42 &0&260 & 0&  &12& \\
\text{non-reductive}                &0&0&1&1&3&1&5& 3&10& 3 &9&  8 &11&5 &24& \\
\text{all orbits non-trivial latin} &0&0&1&1&3&1&5& 3& 9& 3 &9&  3 &11&5 & 7&... \\
\text{latin}                        &1&0&1&1&3&0&5& 2& 8& 0 &9&  1 &11&0 & 3&...\\\hline
\text{non-2-reductive inv.}              &0&0&1&0&1&2&1&3&4&11&1&46 &1&186 &13& \\      
\text{reductive, not 2-reductive inv.}   &0&0&0&0&0&1&0&3&0&10&0&42 &0&185 & 0& \\      
\text{non-reductive inv.}                &0&0&1&0&1&1&1&0&4& 1&1& 4 &1&  1 &13& \\
\text{all orbits non-trivial latin inv.} &0&0&1&0&1&1&1&0&3& 1&1& 1 &1&  1 & 3&...\\
\text{latin inv.}                        &1&0&1&0&1&0&1&0&2& 0&1& 0 &1&  0 & 1&...\\\hline
\end{array}$$
\caption{The number of medial quandles of size $n$, up to isomorphism.}
\label{Fig:count_medial_nonred}
\end{table}

Latin medial quandles are connected, and thus affine by Corollary
\ref{cor_medial iff affine}. Affine quandles, and latin affine
quandles in particular, were enumerated by Hou \cite{Hou2}. He found
explicit formulas for sizes $p^k$ with $p$ prime and $k=1,2,3,4$,
and it follows from the classification of finite abelian groups that
the function counting the number of affine quandles is
multiplicative. The numbers in Table \ref{Fig:count_medial_nonred}
and in \cite{Hou2} agree. According to Corollary
\ref{cor:2-symmetric}, connected involutory medial quandles arise as
$\Aff(G,-1)$, for certain groups $G$. Such a quandle is latin if and only if $x\mapsto2x$ is
a permutation on $G$; in the finite case, if and only if $|G|$ is
odd. Hence the last row in Table \ref{Fig:count_medial_nonred}
counts the number of abelian groups of odd order.

The class of quandles where all orbits are latin was studied in Section \ref{sec:latin_orbits}. According to Theorem \ref{thm:latin_orbits}, all of them are direct products of a latin quandle $L$ and a projection quandle $P$. Assuming the latin quandle $L$ is non-trivial ($|L|>1$), the product $L\times P$ is non-reductive and the number of such products of size $n$ equals $\sum_{1\neq d\mid n}l(d)$, where $l(d)$ denotes the number of latin medial quandles of size $d$.

\subsection{Enumeration algorithm}\label{ssec:algorithms}

Here we describe our method for enumeration of medial quandles of size $n$ in a given class $\mathcal C$. First, we find all partitions $n=m_1+\dots+m_k$ and consider all $k$-tuples of abelian groups $(B_1,\dots,B_k)$ such that $|B_i|=m_i$, up to reordering and isomorphism of fibres. For the rest of the exposition, consider a fixed tuple $(B_1,\dots,B_k)$ such that $B_i\simeq B_j$ implies $B_i=B_j$, let $A_1,\dots,A_m$ be the list of pairwise non-isomorphic groups that appear in the tuple, and assume $B_1=\dots=B_{n_1}=A_1$, $B_{n_1+1}=\dots=B_{n_1+n_2}=A_2$, and so on.
Denote $X$ the set of all indecomposable affine meshes $(B_i;\,\varphi_{i,j};\,c_{i,j})$ over the set $I=\{1,\dots,k\}$ that result in medial quandles from $\mathcal C$.

To calculate the number of homology classes of meshes from $X$, we use Burnside's orbit counting lemma. Let $G$ be a group acting on the set $X$ such that two meshes $\mathcal A,\mathcal A'\in X$ are homologous if and only if there is $g\in G$ such that $g(\mathcal A)=\mathcal A'$. Let $\sim$ be an equivalence on $G$ such that $g\sim h$ implies $\mathrm{fix}(g)=\mathrm{fix}(h)$, where $\mathrm{fix}(g)$ denotes the number of meshes from $X$ fixed by $g$, and fix a set $R$ of class representatives for $\sim$. Then the number of homology classes equals
$$\frac1{|G|}\cdot\sum_{g\in G}\mathrm{fix}(g)=\frac1{|G|}\cdot\sum_{g\in R}|g/{\sim}|\cdot\mathrm{fix}(g).$$
Remark \ref{rem:iso_thm} suggests that one can always take
$$G=\prod_{i=1}^{m} (A_i\rtimes\aut{A_i})\wr S_{n_i}.$$
For some classes, a simplification is possible.
In theory, we could take $\sim$ the conjugacy equivalence, $g\sim h$ iff $g,h$ are conjugate. In practice, it is hard to handle conjugacy in semidirect products, calculate convenient class representatives and determine class sizes efficiently. We take a complementary approach: we declare a set of representatives $R$ and define a \emph{subconjugacy equivalence over $R$}, i.e., an equivalence $\sim$ such that $g\sim h$ implies $g,h$ are conjugate, and $R$ is a set of class representatives for $\sim$.

First, consider an arbitrary wreath product $H\wr S_n$, and assume $H$ possesses a subconjugacy equivalence $\approx$ over a set $T\subseteq H$. Let $U$ be a set of conjugacy class representatives in $S_n$. We define $$R=\{(g_1,\dots,g_n;\pi)\in H\wr S_n:\ g_1\in T,\ g_2,\dots,g_n\in H,\ \pi\in U\}.$$
For every $\pi\in U$ and every $\sigma\in\pi^{S_n}$, fix $\alpha(\sigma)\in S_n$ such that $\sigma=\pi^{\alpha(\sigma)}$; for $\sigma=\pi$ choose $\alpha(\sigma)=1$.
For every $g\in T$ and every $h\approx g$, fix $\beta(h)\in H$ such that $h=g^{\beta(h)}$; for $h=g$ choose $\beta(h)=1$.
For $(\bar g;\pi)\in R$, $\sigma\in\pi^{S_n}$ and $h\approx g_1$, define
$$(g_1,\dots,g_n;\pi)^{(h,\sigma)}=(g_{\alpha(\sigma)(1)}^{\beta(h)},\dots,g_{\alpha(\sigma)(n)}^{\beta(h)};\sigma)$$
and let $\sim$ be the equivalence with blocks
$$(\bar g;\pi)/{\sim}=\{(\bar g;\pi)^{(h,\sigma)}:\ \sigma\in\pi^{S_n},\ h\approx g_1\}$$
for every $(\bar g;\pi)\in R$.
A straightforward calculation shows that this is a well defined equivalence, i.e., the blocks are pairwise disjoint and cover all $H\wr S_n$.
In fact, $\sim$ is a subconjugacy equivalence over the set $R$, because $(\bar g;\pi)^{(h,\sigma)}$ is a conjugate of $(\bar g;\pi)$ by
$(\beta(h),\dots,\beta(h);\alpha(\sigma))$. It is also easy to calculate that $$|(\bar g;\pi)/{\sim}|=|g_1/{\approx}|\cdot|\pi^{S_n}|,$$
because different pairs $(h,\sigma)$ yield different elements $(\bar g;\pi)^{(h,\sigma)}$.

Now, we return back to the original problem, to determine the equivalence $\sim$ on the group $G$ from Remark \ref{rem:iso_thm}. Since $G$ is a direct product of wreath produts, we can take the product equivalence. It remains to determine a subconjugacy equivalence $\approx$ on $A\rtimes\aut A$.
A similar approach can be used:
fix a set $V$ of conjugacy class representatives in $\aut A$, define $T=\{(a,\varphi):\ a\in A,\ \varphi\in V\}$ and construct a subconjugacy equivalence $\approx$ over $T$ in an analogous way, using the action $(a,\varphi)^\psi=(\gamma(\psi)(a),\psi)$, where $\gamma(\psi)$ satisfies $\varphi^{\gamma(\psi)}=\psi$. In particular, $|(a,\varphi)/{\approx}|=|\varphi^{\aut A}|$.

As indicated in Section \ref{ssec:computation}, there are two essentially different cases to be considered for the enumeration: the class of 2-reductive medial quandles (many models, simple structure), and its complement (few models, complicated structure).

\emph{Non-2-reductive medial quandles.} We take $G$ as in Remark
\ref{rem:iso_thm}, and $\sim$, $\approx$, $R$ as described above. It
remains to explain how to calculate the number $\mathrm{fix}(g)$ of
meshes fixed by $g\in G$. We do it by checking every possible affine
mesh for being fixed. Meshes are constructed by an exhaustive
search: homomorphism matrices first, constant matrices compatible
with each homomorphism matrix next. Partial solutions are being
checked on conditions (M1)-(M4), indecomposability, and a number of
structural properties is used to cut further branches in the search
(Propositions \ref{prop:gcd}, \ref{p:finite_latin_orbit},
\ref{p:latin_orbits} and Lemma \ref{lem:2-reductive} are
particularly helpful). Theorem \ref{thm:reductive} is used to
separate the reductive and non-reductive cases. Results from
Section \ref{sec:latin_orbits} are applied on quandles with latin
orbits, avoiding the exhaustive search in this case.

All numbers in Table \ref{Fig:count_medial_nonred} have been checked by an independent calculation using a different approach. Instead of Burnside's lemma, heuristics are applied to avoid some isomorphic copies in the exhaustive search, and the meshes that are retained are checked upon pairwise isomorphism. For medial quandles that are not 2-reductive, the alternative approach results in similar running times. In the 2-reductive case, it is doomed to fail due to a huge number of meshes.

\emph{2-reductive medial quandles.}
The numbers in Table \ref{Fig:count_medial} indicate that we must avoid storing the meshes. Using Theorems \ref{thm:2-reductive} and \ref{thm:2-reductive_iso}, consider the group
$$G=\prod_{i=1}^m \aut{A_i}\wr S_{n_i}=\left(\prod_{i=1}^k \aut{B_i}\right)\rtimes\left(\prod_{i=1}^m S_{n_i}\right)$$
acting on matrices $(c_{i,j})_{i,j=1..k}$ such that $c_{i,j}\in B_j$, $c_{i,i}=0$ and $B_j=\langle c_{1,j},\dots,c_{k,j}\rangle$ for every $i,j$.
We use $\sim$ and $R$ as described above, and let $\approx$ be the conjugacy equivalence on $\aut{A_i}$ (which is easy to handle computationally).
To calculate the number of fixed meshes, consider the action of a permutation $\pi\in\prod_{i=1}^m S_{n_i}\leq S_k$ on a $k\times k$ table, simultaneously permuting rows and columns, as an oriented graph on a $k\times k$ lattice of vertices. Consider a homology $g=(\bar\psi,\pi)\in G$ and a cycle $c$ in $\pi$. The cycle only permutes coordinates related to a particular group, $A_{j}$. It is sufficient to focus on a single column within the cycle $c$ (call it a $c$-column), since one $c$-column determines the other $c$-columns uniquely. Hence the number of fixed meshes can be calculated as
$$\mathrm{fix}(\bar\psi,\pi)=\prod_{c\text{ cycle in }\pi}\left((\text{\# of $c$-columns fixed by }(\bar\psi,\pi)) - (\text{\# of non-generating $c$-columns})\right).$$
The number of non-generating $c$-columns simply means the number of tuples from $A_{j}^{k-1}$ that do not generate the group $A_{j}$. The number of $c$-columns fixed by $(\bar\psi,\pi)$ counts the following: in how many ways can we supply one $c$-column in a way that the part of the table consisting of all $c$-columns (which are uniquely determined by the given one) is fixed by $(\bar\psi,\pi)$? Looking at the graph of the action of $\pi$, the answer is
$$\prod_{d\text{ cycle in }\pi}\left|\mathrm{fix}_{A_j}\left(\psi_j^{\mathrm{lcm}(|c|,|d|)}\right)\right|^{\ell(c,d)}$$
where $\ell(c,d)$ is the length of the component of the graph related to $c,d$. Clearly, $\ell(c,c)=|c|-1$ and $\ell(c,d)=\gcd(|c|,|d|)$ for $c\neq d$.
We obtained a formula for $\mathrm{fix}(\bar\psi,\pi)$.

\emph{Involutory quandles.} We modify the algorithms described above using Corollary \ref{cor:2-symmetric}. For non-2-reductive quandles, the exhaustive search is pruned by setting $\varphi_{i,i}=2$ for every $i$. In the 2-reductive case, we use the observation that a 2-reductive medial quandle is involutory if and only if its orbit groups have exponent at most two.

\section{A note on congruences}\label{sec:congruences}

This section has a mild universal algebraic flavour, and we refer to \cite{Ber} for any undefined notions from universal algebra.

To proceed further in the theory of medial quandles, we need to learn what congruences and quotients are. Is there a nice description of congruences in the language of affine meshes? What is the mesh for the corresponding quotient? We leave the questions for further study. Partial results for 2-reductive and involutory medial quandles can be found in \cite{RR89, R99b}. Their results were sufficiently strong to characterize subdirectly irreducible algebras in the respective classes, see below. Let us start with simple quandles first.


Finite simple quandles, i.e., finite quandles with no non-trivial congruences, were classified independently in \cite{AG,J82b}. The classification is not easy.
Since the orbit decomposition provides a congruence, simple quandles with more than two elements must be connected, hence, in the medial case, affine.
We cite the characterization of Andruskiewitsch and Gra\~na.

\begin{theorem}\cite[Corollary 3.13]{AG}\label{thm:simple}
A finite medial quandle $Q$ is simple if and only if $Q\simeq\aff{\Z_p^k,M}$ where $p$ is a prime and $M$ is the companion matrix of an irreducible monic polynomial in $\mathbb F_p[x]$.
\end{theorem}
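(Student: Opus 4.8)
The plan is to reduce the classification to a purely linear-algebraic statement about one abelian group with an automorphism, by first collapsing $Q$ onto its orbit structure and then analysing the congruence lattice of an affine quandle. The cases $|Q|\le 2$ are immediate: the two-element quandle is the projection quandle $\aff{\Z_2,1}$, which is simple and of the asserted form (take $g=x+1$), so assume $|Q|\ge 3$. The orbit decomposition is always a congruence; it is the full congruence exactly when $Q$ is connected and equality exactly when $Q$ is a projection quandle, and a projection quandle on $\ge 3$ elements is not simple because every partition of it is a congruence. Hence a simple $Q$ with $\ge 3$ elements is connected, so by Corollary \ref{cor_medial iff affine} it is affine, $Q\simeq\aff{A,f}$ with $A$ finite abelian, $f\in\aut A$ and $1-f$ onto.

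The heart of the argument is a description of the congruences of a connected affine quandle, which this paper otherwise leaves open. I would prove that the congruences of $\aff{A,f}$ with $1-f$ onto are exactly the coset partitions of the $f$-invariant subgroups $B\le A$. For the nontrivial inclusion: any congruence is invariant under all translations, hence under the regularly acting displacement group $\dis{\aff{A,f}}\simeq\im(1-f)=A$; an invariant partition for a regular action is a system of blocks, and these are precisely the cosets of a subgroup $B$. Invariance under the one remaining translation $L_0=f$ then forces $f(B)=B$. The converse is a one-line verification from $a*x-a*y=f(x-y)$ and $x*a-y*a=(1-f)(x-y)$. Consequently $Q$ is simple if and only if $A$ has no $f$-invariant subgroup other than $0$ and $A$.

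From here the conclusion is standard module theory. For a prime $p\mid|A|$ the characteristic subgroup $A[p]=\{x:px=0\}$ is $f$-invariant and nonzero, so simplicity forces $A=A[p]$; thus $A\simeq\Z_p^k$ is an $\mathbb F_p$-vector space and $f$ a linear automorphism. Since subgroups of $\Z_p^k$ are exactly the $\mathbb F_p$-subspaces, absence of proper invariant subgroups says precisely that $\Z_p^k$ is a simple $\mathbb F_p[x]$-module with $x$ acting as $f$; hence it is $\mathbb F_p[x]/(g)$ for an irreducible monic $g$, $k=\deg g$, and $f$ is conjugate to the companion matrix $M$ of $g$. Carrying the group isomorphism through Corollary \ref{c:iso_conn} gives $Q\simeq\aff{\Z_p^k,M}$. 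The converse runs the equivalences backward: irreducibility of $g$ makes the module simple, so $\aff{\Z_p^k,M}$ has no proper congruence and is simple, provided it is connected, i.e.\ $1-M$ is invertible, i.e.\ $g\ne x-1$; the borderline $g=x-1$ recovers exactly $\aff{\Z_p,1}$, which is simple only for $p=2$ and is the two-element case already handled.

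The main obstacle I anticipate is the congruence correspondence for connected affine quandles, since the general congruence theory of medial quandles is not developed here; the regular-action/block-system argument above is what makes it tractable and avoids any appeal to unproved machinery. A secondary nuisance is the degenerate eigenvalue-$1$ case $g=x-1$, which is the only point where the bare phrase ``companion matrix of an irreducible polynomial'' must be read together with connectedness in order for the stated equivalence to be exact.
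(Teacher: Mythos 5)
Your proof is correct, but be aware that the paper does not actually prove this theorem: it imports it verbatim from \cite{AG}, supplying only the first reduction you make --- the orbit decomposition is a congruence, so a simple medial quandle with more than two elements is connected, hence affine by Corollary \ref{cor_medial iff affine} --- plus the remark that such quandles can be presented as $\aff{\mathbb F_q,b}$ with $\lmlt Q=\mathbb F_q\rtimes\mathbb F_q^*$ doubly transitive. Everything past that reduction is your own, and it is sound: since $1-f$ is onto, $\dis{\aff{A,f}}\simeq\im(1-f)=A$ acts regularly by translations; left compatibility makes the class partition of any congruence a block system for this action (in the finite setting the maps induced by the $L_a$ on classes are automatically bijective, so no separate $\ld$-compatibility is needed), blocks of a regular abelian action are cosets of a subgroup, and invariance under $L_0=f$ forces $f$-invariance; the converse verification, the passage via $A[p]$ to simple $\mathbb F_p[x]$-modules, and the transport of the companion-matrix normal form through Corollary \ref{c:iso_conn} are all routine. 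Compared with \cite{AG}, where the result sits inside a classification of simple racks via doubly transitive affine group actions, your route is elementary and self-contained within this paper's toolkit (the computation of $\dis{\Aff(A,f)}$, the orbit congruence, the isomorphism corollary); what it gives up is generality, since the block-system argument leans on $\dis Q$ being abelian and regular and does not extend beyond the medial case. Your borderline analysis is a genuine contribution rather than pedantry: read literally, the right-hand side of the statement includes $\aff{\Z_p,1}$ (companion matrix of $x-1$), which is the $p$-element projection quandle and simple only for $p=2$, so the stated equivalence is exact only with the connectedness reading you impose; the other degenerate polynomial $g=x$ is vacuously harmless, since its companion matrix is singular and $\aff{\Z_p^k,0}$ is not a left quasigroup, hence not a quandle at all --- and it is excluded automatically in your forward direction because $f$ is an automorphism.
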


Finite simple medial quandles can also be presented using finite fields: if $b$ is a generator of $\mathbb F_q^*$, then $Q=\aff{\mathbb F_q,b}$ is simple, because $\lmlt Q=\mathbb F_q\rtimes\mathbb F_q^*$ is a doubly transitive group, and all finite simple medial quandle arise this way. 

An algebraic structure is called \emph{subdirectly irreducible} if the intersection of non-trivial congruences is non-trivial. (Subdirectly irreducibles are important since, according to Birkhoff's theorem, every algebra in a variety $\mathcal V$ embeds into a direct product of subdirectly irreducibles in $\mathcal V$, see \cite[Section 3.3]{Ber}). The classification of subdirectly irreducible medial quandles seems to be much harder than that of simple ones, and we leave it as an interesting open problem. Finite subdirectly irreducibles were classified in two special classes of medial quandles, the involutory (2-symmetric) and the 2-reductive ones.

\begin{theorem}\cite[Theorems 3.1 and 4.3]{R99b}\label{thm:sym_si}
A finite involutory medial quandle $Q$ is subdirectly irreducible if and only if $|Q|=2$ or $Q$ is isomorphic to the sum of one of the following affine meshes:
$$((\Z_{p^k}),\,(2),\,(0)),\quad ((\Z_{2^{k}},\Z_{2^{k-1}}),\,\left(\begin{smallmatrix}2&2\\2&2\end{smallmatrix}\right),\,\left(\begin{smallmatrix}0&-1\\1&0\end{smallmatrix}\right)),\quad
((\Z_{2^{k}},\Z_{2^{k-1}},\Z_{2^{k-1}}),\,\left(\begin{smallmatrix}2&2&2\\2&2&2\\2&2&2\end{smallmatrix}\right),\,\left(\begin{smallmatrix}0&-1&0\\1&0&1\\0&-1&0\end{smallmatrix}\right))$$
where $p$ is an odd prime and $k\geq1$.
\end{theorem}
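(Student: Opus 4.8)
The plan is to combine the mesh representation with an explicit description of the congruence lattice, and then to translate subdirect irreducibility into conditions on the mesh data. By Theorem \ref{thm:decomposition} and Corollary \ref{cor:2-symmetric} I may assume $Q$ is the sum of an indecomposable affine mesh $(A_i;\varphi_{i,j};c_{i,j})$ over a set $I$ with $\varphi_{i,i}=2$ for every $i$, so that each orbit subquandle is the dihedral quandle $\aff{A_i,-1}$ (Corollary \ref{cor:2-symmetric*}) and the operation reads $a*b=c_{i,j}+\varphi_{i,j}(a)-b$ for $a\in A_i$, $b\in A_j$. The first task is to describe the congruence lattice. I would show that the orbit congruence $\rho$ (whose blocks are the $A_i$) splits the analysis: congruences above $\rho$ correspond exactly to partitions of $I$, because $Q/\rho$ is a projection quandle and every equivalence on a projection quandle is a congruence; and congruences below $\rho$ correspond to \emph{$\varphi$-closed subgroup systems}, i.e. families $(B_i)$ with $B_i\le A_i$ and $\varphi_{i,j}(B_i)\subseteq B_j$ for all $i,j$ (the constants $c_{i,j}$ drop out of the compatibility check, and every subgroup of $A_i$ is admissible for the dihedral operation). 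A short computation, together with the observation that the congruence generated by a pair $(a,b)$ lying in distinct orbits necessarily enlarges $\rho$, reduces every congruence to a partition of $I$ paired with a compatible subgroup system.

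With this in hand, subdirect irreducibility becomes the existence of a unique monolith. I would first dispose of the degenerate cases. If $Q$ is connected ($|I|=1$), then $Q=\aff{A_1,-1}$ and its congruences are the subgroups of $A_1$ (all $(-1)$-invariant); this lattice has a unique atom precisely when $A_1$ is a cyclic $p$-group, and connectedness of an involutory quandle forces $1-(-1)=2$ to be onto, hence $|A_1|$ odd and $p$ odd, giving the family $((\Z_{p^k}),(2),(0))$. If all orbits are trivial, then $Q$ is a projection quandle, whose congruence lattice is the full partition lattice of $Q$; this is subdirectly irreducible only for $|Q|=2$, the listed exceptional quandle. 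In the remaining case $\rho$ is a proper nontrivial congruence, so the monolith lies below $\rho$ and is the \emph{unique minimal nonzero $\varphi$-closed subgroup system}. Subdirect irreducibility then amounts to two conditions: this minimal system is unique, and there is no transversal congruence $\nu$ with $\nu\wedge\rho=\Delta$ (such a $\nu$ would exhibit $Q$ as a nontrivial subdirect product, ruling out in particular the direct decompositions of Theorem \ref{thm:latin_orbits}).

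The heart of the proof is to solve these conditions. Uniqueness of the minimal subgroup system forces each $A_i$ to be a cyclic $p$-group for a single common prime $p$ (a fibre of a second prime would contribute an independent minimal system, since homomorphisms between groups of coprime order vanish), and the $\varphi$-closure condition together with (M3), which gives $\varphi_{i,i}^2=\varphi_{j,i}\varphi_{i,j}$, links the fibres so tightly that their minimal socles must all be identified by the maps $\varphi_{i,j}$. I would then argue that for odd $p$ the diagonal map $1-\varphi_{i,i}=-1$ is already onto, so the mesh is connected and we return to the one-orbit case; hence a genuinely multi-orbit subdirectly irreducible quandle must have $p=2$. For $p=2$ the $2$-adic arithmetic of multiplication by $2$ on cyclic $2$-groups, fed through (M3) and (M4), pins the fibre sizes to the pattern $2^k$ and $2^{k-1}$ and, after normalising the mesh by a homology via Theorem \ref{thm:isomorphism}, fixes the homomorphism matrix to the all-$2$ matrix and the constant matrix to the displayed skew forms. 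The same arithmetic, via the absence of a transversal congruence, bounds the number of orbits by three, since a fourth fibre would either create a second minimal subgroup system or split off a product factor. Finally I would invoke Theorem \ref{thm:isomorphism} both to justify the normalisations and to verify that the three families are pairwise non-isomorphic and exhaust the list.

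The main obstacle I anticipate is precisely this last, computational, stage: establishing the congruence calculus for mesh sums (which the present paper explicitly leaves to future work) and then carrying out the delicate $2$-adic bookkeeping that forces the sizes $2^k,2^{k-1}$, the specific constant matrices, and above all the bound of three orbits. Describing the monolith abstractly and reducing to cyclic $p$-groups is routine module theory, but ruling out all larger or differently shaped $p=2$ meshes — and proving that no transversal, product-splitting congruence sneaks in — is where the real difficulty lies.
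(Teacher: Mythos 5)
This statement is quoted in the paper from Roszkowska-Lech \cite[Theorems 3.1 and 4.3]{R99b}; the paper gives no proof of it, and in Section \ref{sec:congruences} it explicitly leaves the congruence theory of affine meshes as an open problem. So there is no in-paper argument to compare yours against; your proposal has to stand on its own, and as it stands it has two genuine problems.

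First, your congruence calculus contains a false step. The claim that ``the congruence generated by a pair $(a,b)$ lying in distinct orbits necessarily enlarges $\rho$'' fails already inside the involutory medial class: take $Q=\aff{\Z_3,-1}\times P$ with $P$ a two-element projection quandle. The kernel $\nu$ of the projection onto the latin factor is generated by a single cross-orbit pair $((0,1),(0,2))$, yet $\nu\wedge\rho=\Delta$, so $\nu$ is exactly one of the transversal congruences your reduction claims cannot arise from a cross-orbit pair. Consequently the asserted bijection between congruences and pairs (partition of $I$, $\varphi$-closed subgroup system) is wrong: the lattice also contains ``graph-of-isomorphism'' congruences gluing distinct orbits along cosets, and your later appeal to the absence of a transversal $\nu$ with $\nu\wedge\rho=\Delta$ tacitly concedes this, making the framework internally inconsistent. (The part of your analysis below $\rho$ is fine: invariance under $\dis Q$, which acts on each fibre by the full translation group thanks to indecomposability, does force the blocks inside each $A_j$ to be cosets of subgroups $B_j$ with $\varphi_{i,j}(B_i)\subseteq B_j$; and the one-orbit and all-orbits-trivial cases are handled correctly.)

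Second, everything that actually distinguishes this theorem from a routine exercise is deferred rather than proved, as you yourself acknowledge: the argument that uniqueness of the monolith forces a single prime $p$; the $p=2$ bookkeeping through (M3) and (M4) that pins the fibres to the pattern $\Z_{2^k},\Z_{2^{k-1}}$ and the constants to the displayed skew matrices; the bound of at most three orbits; and the converse verification that the three listed families really are subdirectly irreducible (you only invoke Theorem \ref{thm:isomorphism} for pairwise non-isomorphism, which does not address sufficiency). Your reduction of the odd-$p$ multi-orbit case is salvageable, but via Theorem \ref{thm:latin_orbits} and the resulting product congruences, not via the stated (incorrect) inference that $1-\varphi_{i,i}=-1$ being onto makes the mesh connected---connectivity of each orbit needs $\varphi_{i,i}=2$ onto, and even then $Q$ is a product $L\times P$, not a single orbit. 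In short: the skeleton (mesh normalization by Corollary \ref{cor:2-symmetric}, monolith below $\rho$, exclusion of transversal congruences) is a reasonable plan, but the congruence-lattice lemma it rests on is false as stated, and the decisive classification work is missing.
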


\begin{theorem}\cite[Theorem 3.1]{RR89}\label{thm:red_si}
A finite 2-reductive medial quandle $Q$ is subdirectly irreducible if and only if $|Q|=2$ or $Q$ is isomorphic to the sum of an affine mesh
$$((\Z_{p^k},\Z_1,\dots,\Z_1),\,0,\,(c_{i,j})),$$
where $p^k$ is a prime power, the number $m$ of fibres is at least two, and $c_{2,1},\dots,c_{m,1}\in\Z_{p^k}$ are pairwise different elements such that $\Z_{p^k}=\langle c_{2,1},\dots,c_{m,1}\rangle$.
\end{theorem}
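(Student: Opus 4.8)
The plan is to determine the congruence lattice $\Con(Q)$ directly from the mesh. By Theorems~\ref{thm:decomposition} and~\ref{thm:2-reductive} I may assume $Q$ is the sum of an indecomposable mesh $(A_i;0;c_{i,j})$ over a set $I$, so that $a*b=b+c_{i,j}$ for $a\in A_i$, $b\in A_j$; in particular each left translation $L_a$ depends only on the orbit of $a$ and acts on a fibre $A_j$ as the translation $x\mapsto x+c_{i,j}$. Let $\rho$ be the orbit congruence, whose blocks are the fibres. The first step is to describe the interval $[\Delta,\rho]$ of $\Con(Q)$. If $\theta\le\rho$, then its restriction to a fibre $A_j$ is an equivalence invariant under translation by every $c_{i,j}$; since these generate $A_j$ by indecomposability, the restriction is invariant under all translations, hence is the coset partition of a subgroup $B_j\le A_j$. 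Conversely every tuple $(B_j)_{j\in I}$ of subgroups yields such a congruence (a routine check). Thus $[\Delta,\rho]\cong\prod_{j\in I}\mathrm{Sub}(A_j)$, and its atoms are exactly the congruences collapsing one minimal subgroup inside one fibre.

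For the implication $(\Leftarrow)$ I treat the two listed families. A two-element quandle is a projection quandle whose congruence lattice is the two-element chain, so it is subdirectly irreducible. For the second family write $Q=A_1\sqcup\{e_2,\dots,e_m\}$, where $A_1=\Z_{p^k}$ and $e_2,\dots,e_m$ are the singleton orbits. Because $\Z_{p^k}$ is a cyclic $p$-group it has a unique minimal subgroup $\Z_p$; let $\theta_*$ be the corresponding atom of $[\Delta,\rho]$. I claim every nontrivial $\theta\in\Con(Q)$ contains $\theta_*$, which makes $\theta_*$ the monolith. Such a $\theta$ collapses a pair $x\ne y$, and I argue by the orbits of $x,y$. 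If $x,y\in A_1$, then $\theta\cap(A_1\times A_1)$ is a translation-invariant partition collapsing a nonzero element, hence the coset partition of a nonzero subgroup, which contains $\Z_p$; thus $\theta\supseteq\theta_*$. If $x=e_i$ and $y=e_{i'}$, multiplying on the left by $e_i,e_{i'}$ against a fixed $b\in A_1$ gives $b+c_{i,1}\mathrel{\theta}b+c_{i',1}$, a nonzero collapse inside $A_1$ since $c_{i,1}\ne c_{i',1}$. If $x=e_i$ and $y\in A_1$, choose by indecomposability an index $l$ with $c_{l,1}\ne0$ and apply $L_{e_l}$ to the pair: since $L_{e_l}$ fixes $e_i$ and sends $y$ to $y+c_{l,1}$, one obtains $y\mathrel{\theta}y+c_{l,1}$, again a nonzero collapse inside $A_1$. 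In every case we reduce to the first, so $\theta\supseteq\theta_*$ and $Q$ is subdirectly irreducible.

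For $(\Rightarrow)$ assume $Q$ is subdirectly irreducible with $|Q|>2$; then $\Con(Q)$ has a least nontrivial element, so two atoms with meet $\Delta$ cannot coexist. Using $[\Delta,\rho]\cong\prod_j\mathrm{Sub}(A_j)$, if two fibres were nontrivial their minimal-subgroup atoms would meet in $\Delta$, and if one nontrivial fibre had two distinct minimal subgroups the same would happen; hence there is exactly one nontrivial fibre and it has a unique minimal subgroup, i.e.\ it is $\Z_{p^k}$, the others being $\Z_1$. (If every fibre were trivial, $Q$ would be a projection quandle, subdirectly irreducible only when $|Q|=2$, excluded.) Indecomposability of the first column gives $\Z_{p^k}=\langle c_{2,1},\dots,c_{m,1}\rangle$, forcing $m\ge2$. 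Finally, if $c_{i,1}=c_{i',1}$ for some $i\ne i'$, the relation collapsing only $e_i,e_{i'}$ is a congruence (the maps $L_{e_i},L_{e_{i'}}$ coincide and both orbits are singletons), a second atom whose meet with $\theta_*$ is $\Delta$, a contradiction; hence the $c_{i,1}$ are pairwise distinct. All remaining constants lie in trivial fibres and vanish, so $Q$ is the sum of the claimed mesh.

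The main obstacle is the structural analysis of $\Con(Q)$, specifically the claim that every nontrivial congruence contains the monolith. The interval $[\Delta,\rho]$ is the easy part, reducing to subgroup lattices; the delicate part is controlling congruences that collapse distinct orbits (the cases $x=e_i,e_{i'}$ and $x=e_i,y\in A_1$ above), for which the paper provides no general mesh-level congruence theory. The recurring device that makes these tractable is indecomposability: it guarantees a constant $c_{l,1}\ne0$, which converts any collapse involving a singleton orbit into a genuine nonzero translation inside $A_1$. The other point requiring care is checking that collapsing two singleton orbits with equal first-column constants really is a congruence, since this underpins the necessity of the distinctness condition.
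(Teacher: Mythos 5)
Your proof is correct. There is, however, no in-paper proof to compare it with: the paper states Theorem \ref{thm:red_si} as a citation of \cite[Theorem 3.1]{RR89} and proves nothing itself, so your argument is necessarily a different route --- a self-contained proof inside the affine-mesh formalism. The comparison is then with Romanowska--Roszkowska's original, which works in their representation of $n$-cyclic groupoids (the one Theorem \ref{thm:2-reductive} generalizes) and relies on an explicit description of congruences developed there; your version replaces that machinery with the single mesh-level observation that, for the sum of an indecomposable mesh with zero homomorphisms, the interval $[\Delta,\rho]$ below the orbit congruence is $\prod_{j}\mathrm{Sub}(A_j)$ --- indecomposability forces each restricted equivalence to be invariant under all translations of its fibre, hence a coset partition. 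That is precisely the kind of congruence description in the language of meshes that Section \ref{sec:congruences} poses as an open problem, so your first step is genuinely new relative to the paper and is what the approach buys: the subdirect-irreducibility criterion falls out of a three-case monolith analysis (both collapsed elements in $\Z_{p^k}$; two singletons, where pairwise distinctness of the $c_{i,1}$ produces a nonzero collapse in the big fibre; mixed, where indecomposability supplies $c_{l,1}\neq 0$ and $L_{e_l}$ converts the collapse into one inside $\Z_{p^k}$), plus the converse check that equal constants $c_{i,1}=c_{i',1}$ make $\{e_i,e_{i'}\}$ a congruence block, which correctly isolates why distinctness is necessary. Two points you leave implicit are standard and harmless in the finite setting: compatibility with $\ld$ follows from compatibility with $\cdot$ because each $L_a$ has finite order, and a finite abelian group with a unique minimal subgroup is a cyclic $p$-group (used when you conclude the unique nontrivial fibre is $\Z_{p^k}$). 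Note also that your argument, unlike the bare statement, does not need the $c_{i,1}$ to be nonzero, only pairwise distinct and generating, which matches the theorem as stated.
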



\begin{thebibliography}{99}

\bibitem{AG}
N. Andruskiewitsch, M. Gra\~na, \emph{From racks to pointed Hopf algebras}, Adv. Math. 178/2 (2003), 177--243.

\bibitem{Ber}
C.~Bergman, \emph{Universal algebra: Fundamentals and selected topics}, Chapman \& Hall/CRC Press, 2011.

\bibitem{B}
S. Blackburn, \emph{Enumerating finite racks, quandles and kei},
Electron. J. Combin. 20 (2013), no. 3, Paper 43, 9 pp.


\bibitem{Car}
J.S. Carter,  \emph{A survey of quandle ideas}, Introductory
lectures on knot theory, 22–53, Ser. Knots Everything, 46, World
Sci. Publ., Hackensack, NJ, 2012.

\bibitem{EGTY}
G. Ehrman, A. Gurpinar, M. Thibault, D.N. Yetter, \emph{Toward a
classification of finite quandles}, J. Knot Theory Ramifications 17
(2008), no. 4, 511--520.

\bibitem{FM}
R.~Freese, R.~McKenzie, \emph{Commutator theory for congruence
modular varieties}, London Mathematical Society Lecture Note Series
125, Cambridge University Press, Cambridge, 1987.

\bibitem{gap}
The GAP Group, GAP -- Groups, Algorithms, and Programming, Version 4.5.5; 2012. Available at \url{http://www.gap-system.org}.

\bibitem{Hig}
P.J. Higgins, \emph{Groups with multiple operators}, Proc. Lond.
Math. Soc., III. Ser. 6 (1956), 366-416.

\bibitem{HN}
B. Ho, S. Nelson, \emph{Matrices and Finite Quandles}, Homology,
Homotopy and Applications, 7 (2005), no. 1, 197-208.

\bibitem{H}
H. Holmes, \emph{Left distributive algebras and knots,} Master's Thesis, Charles University in Prague, 2013. Available at \url{https://is.cuni.cz/webapps/zzp}.

\bibitem{Hou1}
X. Hou, \emph{Automorphism groups of Alexander quandles}, J. Algebra 344 (2011), 373–-385.

\bibitem{Hou2}
X. Hou, \emph{Finite modules over $\Z[t,t^{-1}]$}, J. Knot Theory Ramifications 21 (2012), no. 8, 1250079, 28 pp.

\bibitem{HSV}
A. Hulpke, D. Stanovsk\'y, P. Vojt\v echovsk\'y, {\it Connected quandles and transitive groups}, to appear in J. Pure Appl. Algebra. Available at \url{http://arxiv.org/abs/1409.2249}.

\bibitem{J82}
D. Joyce, \emph{Classifying invariant of knots, the knot quandle}, J. Pure Applied Algebra, 23 (1982), 37--65.

\bibitem{J82b}
D. Joyce, \emph{Simple quandles}, J. Algebra, 79 (1982), 307--318.

\bibitem{Kam}
S. Kamada, \emph{Knot invariants derived from quandles and racks}, Invariants of knots and 3-manifolds (Kyoto, 2001), 103–117, Geom. Topol. Monogr., 4, Geom. Topol. Publ., Coventry, 2002.

\bibitem{Kep}
T. Kepka, \emph{Distributive division groupoids}, Mathematische Nachrichten, 87 (1979), 103--107.

\bibitem{KPRS}
A.V. Kravchenko, A. Pilitowska, A. Romanowska, D. Stanovsk\'y,
\emph{Differential modes}, Internat. J. Algebra Comput. 18 (2008),
no. 3, 567–-588.

\bibitem{NW}
S. Nelson, C.-Y. Wong, \emph{On the orbit decomposition of finite
quandles}, J. Knot Theory Ramifications 15 (2006), no. 6, 761--772.

\bibitem{OEIS}
OEIS Foundation Inc., \emph{The On-Line Encyclopedia of Integer Sequences}, \url{http://oeis.org}.

\bibitem{Pie}
R.S. Pierce, \emph{Symmetric groupoids}, Osaka J. Math. 15 (1978),
51--76.

\bibitem{PRR96}
A. Pilitowska, A. Romanowska, B. Roszkowska-Lech, \emph{Products of
mode varieties and algebras of subalgebras}, Math. Slovaca 46
(1996), no. 5, 497–-514.

\bibitem{PR}
A. Pilitowska, A. Romanowska, \emph{Reductive modes}, Period. Math.
Hungar. 36 (1998), no. 1, 67–-78.

\bibitem{Pl71}
J. P\l onka, \emph{On algebras with $n$ distinct essentially $n$-ary
operations}, Algebra Universalis 1 (1971), 73--79.

\bibitem{Plo}
J. P\l onka, \emph{On $k$-cyclic groupoids}, Math. Japon. 30 (1985),
no. 3, 371–-382.

\bibitem{RR89}
A. Romanowska, B. Roszkowska, \emph{Representations of $n$-cyclic
groupoids,} Algebra Universalis 26 (1989), 7–-15.

\bibitem{RS}
A. Romanowska, J.D.H. Smith, \emph{Modes,} World Scientific, 2002.

\bibitem{RS91}
A. Romanowska, J.D.H. Smith, \emph{Differential groupoids,}
Contributions to general algebra, 7 (Vienna, 1990), 283–-290,
H\"older-Pichler-Tempsky, Vienna, 1991.

\bibitem{R87}
B. Roszkowska, {\it The lattice of varieties of symmetric idempotent
entropic groupoids}, Demonstratio Mathematica, 20(1-2) (1987),
259--275.

\bibitem{R89}
B. Roszkowska, {\it On some varieties of symmetric idempotent entropic groupoids},
Universal and Applied Algebra, World Scientific, 1989, 254--274.

\bibitem{R99a}
B. Roszkowska-Lech, {\it A representation of symmetric idempotent
and entropic groupoids}, Demonstratio Mathematica, 32(2) (1999),
247--262.

\bibitem{R99b}
B. Roszkowska-Lech, {\it Subdirectly irreducible symmetric
idempotent and entropic groupoids}, Demonstratio Mathematica, 32(3)
(1999), 469--484.




\end{thebibliography}
\end{document}